\newtheorem{theorem}{Theorem}[section]
\newtheorem{lemma}[theorem]{Lemma}
\newtheorem{proposition}[theorem]{Proposition}
\newtheorem{corollary}[theorem]{Corollary}
\theoremstyle{definition}
\newtheorem{definition}[theorem]{Definition}
\theoremstyle{remark}
\newtheorem{remark}[theorem]{Remark}
\begin{document}

\title[ Integral Operators Induced by Harmonic Bergman-Besov kernels]{A Class of Integral Operators Induced by Harmonic Bergman-Besov kernels on Lebesgue Classes}

\author{\"{O}mer Faruk Do\u{g}an}
\address{Department of Mathematics,  Tek$\dot{\hbox{\i}}$rda\u{g} Nam{\i}k Kemal University,
59030 Tek$\dot{\hbox{\i}}$rda\u{g}, Turkey}
\email{ofdogan@nku.edu.tr}

\subjclass[2010]{Primary 47B34, 47G10, Secondary 32A55,31B05,31B10,42B35,45P05,47B32, 46E20,46E15}

\keywords{ Integral operator, Harmonic Bergman-Besov kernel, Harmonic Bergman-Besov space, Weighted harmonic Bloch space, Harmonic Bergman-Besov projection, Schur test, Forelli-Rudin estimate, Inclusion relation.}

\begin{abstract}
We provide a full characterization in terms of the six parameters involved the boundedness of all standard weighted integral operators induced by harmonic Bergman-Besov kernels acting between different Lebesgue classes with standard weights on the unit ball of $\mathbb{R}^{n}$. These operators in some sense generalize  the harmonic Bergman-Besov projections. To obtain the necessity conditions, we use a technique that heavily depends on the precise inclusion relations between harmonic Bergman-Besov and weighted Bloch spaces on the unit ball. This fruitful technique is new. It has been used first  with holomorphic Bergman-Besov kernels by Kaptano\u{g}lu and \"{U}reyen. Methods of the sufficiency proofs we employ are Schur tests or H\"{o}lder or Minkowski type i\-ne\-qu\-a\-li\-ti\-es which also make use of estimates of Forelli-Rudin type integrals.
\end{abstract}

\date{\today}

\maketitle

\section{Introduction}\label{section-Introduction}

Let $n\geq 2$ be an integer,  $\mathbb{B}$ be the  unit ball and $\mathbb{S}$ be the unit sphere  of $\mathbb{R}^n$. Let $\nu$ and $\sigma$ be the  volume and surface  measures on $\mathbb{B}$ and $\mathbb{S}$ normalized so that $\nu(\mathbb{B})=1$ and $\sigma(\mathbb{S})=1$.
For  $\alpha\in \mathbb{R}$,  define the weighted volume measures $\nu_\alpha$ on $\mathbb{B}$ by
\[
d\nu_\alpha(x)=\frac{1}{V_\alpha} (1-|x|^2)^\alpha d\nu(x).
\]
These measures are finite when $\alpha>-1$ and in this case we choose $V_\alpha$ so that $\nu_\alpha(\mathbb{B})=1$. When $\alpha\leq -1$, we set $V_\alpha=1$.
For $0<p<\infty$, we denote the Lebesgue classes with respect to $\nu_\alpha$ by $L^p_{\alpha}$ and the corresponding norms by $\|\cdot\|_{L^p_{\alpha}}$.

Let $h(\mathbb{B})$ be the space of all complex-valued harmonic functions on $\mathbb{B}$ with the topology of uniform convergence on compact subsets. The space of bounded harmonic functions on $\mathbb{B}$ is denoted by $h^{\infty}$.
For $0<p<\infty$ and $\alpha>-1$, the weighted  harmonic Bergman space $b^p_\alpha$ is defined by $b^p_\alpha=  L^p_\alpha \cap h(\mathbb{B})$ endowed with the norm
$\|\cdot\|_{L^p_{\alpha}}$. When $p=2$, the space $b^2_\alpha$ is a Hilbert space with respect to the inner product $[f,g]_{b^2_\alpha}=\int_{\mathbb{B}}f\overline{g} \, d\nu_{\alpha}(x)$ and for each $x\in \mathbb{B}$, the point evaluation functional $f \to f(x)$  is bounded on
$b^2_\alpha$. Thus, by the Riesz representation theorem, there exists the reproducing kernel $R_\alpha(x,\cdot)$ such that $f(x)=[f,R_\alpha(x,\cdot)]_{b^2_\alpha}$ for every $f\in b^2_\alpha$ and $x\in \mathbb{B}$. The homogeneous expansion of $R_\alpha$ is given in the $\alpha>-1$ part of the formulas (\ref{Rq - Series expansion}) and (\ref{gamma k q-Definition}) below (see \cite{GKU2}, \cite{DS}).

The orthogonal projection  $ Q_\alpha:L^2_\alpha \to b^2_\alpha$ is given by the integral operator
\begin{equation}\label{orthogonal projection}
  Q_\alpha f(x)= \frac{1}{V_\alpha} \int_{\mathbb{B}} R_\alpha(x,y) f(y) (1-|y|^2)^\alpha d\nu(y) \quad (f\in L^2_\alpha ).
\end{equation}
The above integral operator plays a major role in the theory of weighted harmonic
Bergman spaces and the question when $ Q_\alpha:L^p_\beta \to L^p_\beta$  is bounded is studied in many sources such as (\cite[ Lemma 3.3]{CR}, \cite[Theorem 7.3]{DS}, \cite[Theorem 3.1]{JP2}, \cite[Lemma 2.4]{PE}, \cite[Propositions 3.5 and 3.6]{LSR}).

The main purpose of this paper is to determine precisely when the integral operator in (\ref{orthogonal projection}) is bounded with considering all possible generalizations. First we
allow for the exponents and the weights to be different and consider the operator
in (\ref{orthogonal projection}) from $L^p_\alpha$ to $L^q_\beta$. Next we allow the parameters in the integrand in (\ref{orthogonal projection}) to be different.

In addition we also remove the restriction $\alpha > -1$ and allow it to be any real
number. The weighted harmonic Bergman spaces $b^p_\alpha$ initially defined for $\alpha > -1$
can be extended to the whole range $\alpha \in \mathbb{R}$. This is studied in \cite{GKU2} and will be
briefly reviewed in Section \ref{section-Preliminaries}. We call the extended family $b^p_\alpha$ $(\alpha \in \mathbb{R}) $ as harmonic Bergman-Besov spaces and the corresponding reproducing kernels $R_\alpha(x,y)$ $(\alpha \in \mathbb{R})$ as harmonic Bergman-Besov kernels. The homogeneous expansion of  $R_\alpha$ in terms of zonal harmonics have the form
\begin{equation}\label{Rq - Series expansion}
R_\alpha(x,y)=\sum_{k=0}^{\infty} \gamma_k(\alpha) Z_k(x,y) \quad  (\alpha\in \mathbb{R}, \, x,y\in \mathbb{B}),
\end{equation}
where (see \cite[Theorem 3.7]{GKU1}, \cite[Theorem 1.3]{GKU2})
\begin{equation}\label{gamma k q-Definition}
        \gamma_k(\alpha):= \begin{cases}
         \dfrac{(1+n/2+\alpha)_k}{(n/2)_k}, &\text{if $\, \alpha > -(1+n/2)$}; \\
         \noalign{\medskip}
         \dfrac{(k!)^2}{(1-(n/2+\alpha))_k (n/2)_k}, &\text{if $\, \alpha \leq -(1+n/2)$},
\end{cases}
\end{equation}
and $(a)_b$ is the Pochhammer symbol. For definition and details about $Z_k(x,y)$, see \cite[Chapter 5]{ABR}.

Finally, we allow the exponents $p, q$ to be $\infty$. Let $L^{\infty}=L^{\infty}(\nu)$ be the Lebesgue class of all essentially bounded functions on $\mathbb{B}$ with respect to $\nu$. In this case we
have $L^{\infty}(d\nu_{\alpha})=L^{\infty}$ for every $\alpha \in \mathbb{R}$ and because of this we need to use a different
weighted class. For $\alpha\in \mathbb{R}$,  we define
\begin{equation*}
\mathcal{L}^{\infty}_\alpha := \{\varphi \,\, \text{is measurable on} \, \, \mathbb{B}: (1-|x|^2)^{\alpha} \varphi(x) \in L^{\infty} \},
\end{equation*}
so that $\mathcal{L}^{\infty}_0 = L^{\infty}$. The norm on $\mathcal{L}^{\infty}_\alpha$ is
\begin{equation*}
\|\varphi\|_{\mathcal{L}^{\infty}_\alpha} = \|(1-|x|^2)^\alpha \varphi(x)\|_{L^\infty}.
\end{equation*}

We are now ready to state our results. For $b,c \in \mathbb{R}$  define the integral operators $T_{bc}$ and $S_{bc}$ by
\begin{equation*}
T_{bc}\, f(x) = \int_{\mathbb{B}} R_{c} (x,y) \, f(y) (1-|y|^2)^b d\nu(y)
\end{equation*}
and
\begin{equation*}
S_{bc}\, f(x) =  \int_{\mathbb{B}} \big|R_{c} (x,y)\big| \, f(y) (1-|y|^2)^b d\nu(y).
\end{equation*}
 We are interested in determining exactly when the above operators are bounded from $L^p_\alpha$ to $L^q_\beta$.  Our main results are the following seven theorems that describe their boundedness in terms of the six parameters $(b,c,\alpha,\beta,p,q)$ involved. We include the operator $S_{bc}$ because we need operators with positive kernels to apply Schur tests.

  The corresponding  integral operators between different Lebesgue classes on the unit ball of $\mathbb{C}^{n}$ in the holomorphic case are considered  in
several publications, such as \cite{Z}. But a complete investigation of the
weighted integral operators arising from holomorphic Bergman-Besov kernels between different weighted Lebesgue classes is recently concluded in \cite{KU1}. We do not attempt to survey the wide literature on different spaces or on more general domains or on
more general weights.

We first consider the case $1\leq p \leq q<\infty$.
The special case $1\leq p=q<\infty$ and $\alpha=\beta$ with $\alpha\in \mathbb{R}$  is considered earlier in \cite{GKU2}. Notice that, in \cite{GKU2} they used the operators which contain an extra factor $(1-|x|^{2})^{a}$ (clearly it does not change anything on the boundedness of operators) and an extra constraints that $c=b+a$. When $\alpha>-n$, the kernel $R_\alpha(x,y)$ is dominated by $1/[x,y]^{\alpha+n}$ (see Lemma \ref{Lemma-Kernel-Estimate} below). Here and subsequently, $[x,y]$ denotes  $[x,y]=\sqrt{1-2 x\cdot y + |x|^2 |y|^2}$ for $x,y\in \mathbb{B}$. For  the boundedness and the norm  of the integral operators which contain  these dominating terms instead of the kernels and an extra factor $(1-|x|^{2})^{a}$ but only for the restricted case $1\leq p=q<\infty$ and $\alpha=\beta>-1$, see \cite{LZ}. We do not try to estimate the norms of the main operators.
The holomorphic counterparts of our two results below on the boundedness of  integral operators induced by holomorphic Bergman-Besov kernels  appear in \cite{KU1}; also the more restricted kernels and case with $\alpha,\beta>-1$ in \cite{KT}.

\begin{theorem}\label{Theorem-Boundedness of T,S 1.1}
Let $b$ and $c$ be real numbers. Let $1<p\leq q<\infty$ and $\alpha,\beta \in \mathbb{R}$  with $\beta>-1$. The following are equivalent:
\begin{enumerate}
\item[(i)] $T_{bc}$ is bounded from $L^p_\alpha$ to $L^q_\beta$.
\item[(ii)] $S_{bc}$ is bounded from $L^p_\alpha$ to $L^q_\beta$.
\item[(iii)] $\alpha+1<p(b+1)$ and $c\leq b+\dfrac{n+\beta}{q}-\dfrac{n+\alpha}{p}$.
\end{enumerate}
\end{theorem}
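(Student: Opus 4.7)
The implication $(ii)\Rightarrow(i)$ is immediate from the pointwise bound $|T_{bc}f(x)|\leq S_{bc}|f|(x)$.

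For $(iii)\Rightarrow(ii)$ my plan is to apply a Schur test to the positive operator $S_{bc}$. After replacing $|R_c(x,y)|$ by its majorant of the form $[x,y]^{-(n+c)}$ (Lemma~\ref{Lemma-Kernel-Estimate} when $c>-n$, and the analogous bound when $c\leq -n$), I test with an auxiliary weight of power type $\varphi(y)=(1-|y|^2)^t$. When $p=q$ the classical Schur test collapses the problem to two Forelli-Rudin integrals; the admissible range of $t$ is non-empty precisely under the hypotheses in (iii), and the assumption $\beta>-1$ is what puts the $x$-integration side of the test inside the standard Forelli-Rudin framework. When $p<q$, I combine Hölder's or Minkowski's integral inequality with a one-sided Schur-type estimate, again reducing to Forelli-Rudin bounds; this is how the sufficiency enlarges the diagonal result of \cite{GKU2} to the off-diagonal setting.

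For $(i)\Rightarrow(iii)$ the strategy is to feed $T_{bc}$ explicit one-parameter families of test functions and record when the resulting $L^q_\beta$ norms blow up. The first inequality $\alpha+1<p(b+1)$ is read off from pure power tests $f_t(y)=(1-|y|^2)^t$: these lie in $L^p_\alpha$ iff $pt+\alpha>-1$, while absolute convergence of the integral defining $T_{bc}f_t$ requires $b+t>-1$, so letting $t$ approach $-(\alpha+1)/p$ forces the claim. The sharper scaling inequality $c\leq b+(n+\beta)/q-(n+\alpha)/p$, carrying the dimension constant $n$ in both numerators, cannot be recovered from pure powers (which only yield $c\leq b+(\beta+1)/q-(\alpha+1)/p$). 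Following the technique of \cite{KU1}, I instead test on kernel-type functions $f_a(y)=[a,y]^{-(n+s)}$ (or $f_a=R_s(a,\cdot)$) that concentrate near a boundary point $a\in\mathbb{B}$. Forelli-Rudin computes $\|f_a\|_{L^p_\alpha}$ with an exponent of $1-|a|^2$ involving $n/p$, while $T_{bc}f_a$ is again of kernel type with $L^q_\beta$ norm involving $n/q$; requiring the ratio $\|T_{bc}f_a\|_{L^q_\beta}/\|f_a\|_{L^p_\alpha}$ to stay bounded as $|a|\to 1$ yields the dimensional scaling.

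The principal obstacle is making this necessity step uniform across all ranges of $c$. In the borderline parameter values where Forelli-Rudin asymptotics degenerate to logarithms or constants, and in the regime $c\leq -n$ where the kernel estimate changes form, a raw Forelli-Rudin computation must be replaced by the precise inclusion relations between harmonic Bergman-Besov spaces $b^p_\alpha$ and the weighted classes $\mathcal{L}^\infty_t$ developed in Section~\ref{section-Preliminaries}. These inclusions are what ultimately allow the test-function method to deliver the exact six-parameter threshold stated in (iii).
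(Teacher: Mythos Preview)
Your outline is workable, but it diverges from the paper's argument in two places, one minor and one substantive.

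For $(iii)\Rightarrow(ii)$ the paper does not split into $p=q$ and $p<q$: it applies a single generalized Schur test (Theorem~\ref{Schur test 1}) valid for all $1<p\le q<\infty$, with parameters $\gamma,\delta$ satisfying $\gamma+\delta=1$ and power test functions $\phi(x)=(1-|x|^2)^A$, $\psi(y)=(1-|y|^2)^B$. After reducing to the extremal value of $c$ via Corollary~\ref{Corollary-Lemma-Kernel-comparasion} (which disposes of $c\le -n$ in one stroke rather than via a separate ``analogous bound''), admissible $A,B,\gamma,\delta$ are produced by matching exponents in Lemma~\ref{Integral-[x,y]}. Your H\"older/Minkowski route for $p<q$ would also succeed, so this is mainly a difference of packaging.

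The substantive difference is in $(i)\Rightarrow(iii)$ for the second inequality. You propose the classical kernel-test-function route, with $f_a=R_s(a,\cdot)$ and Forelli--Rudin asymptotics, treating the inclusion relations only as a patch for degenerate parameters. The paper does the opposite: it never estimates $\|T_{bc}f_a\|_{L^q_\beta}$ for a concentrated family. Instead it uses the algebraic identity $D_c^{b-c}T_{bc}I_b^t=C\cdot\mathrm{id}$ on $b^p_\alpha$ (Lemma~\ref{Composition of T, Dst}) to deduce that boundedness of $T_{bc}:L^p_\alpha\to L^q_\beta$ forces the embedding $b^p_\alpha\subset b^q_{\beta+q(b-c)}$, and then reads off $c\le b+(n+\beta)/q-(n+\alpha)/p$ directly from the sharp inclusion Theorem~\ref{Besov-Besov2} (an inclusion between two Bergman--Besov spaces, not into $\mathcal{L}^\infty_t$ as you wrote). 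This handles every $c$, including $c\le -n$, and every $\alpha$ uniformly with no case analysis, whereas your approach would need extra work when $\alpha\le -1$ (there $[a,\cdot]^{-(n+s)}\notin L^p_\alpha$ for any $s$) and when the Forelli--Rudin exponents collapse to logarithms. Your plan can be completed, but the paper's inclusion-based argument is the method it advertises as new, and here it is both shorter and free of edge cases.
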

We have to treat the case $p=1$ separately.
\begin{theorem}\label{Theorem-Boundedness of T,S 1.2}
Let $b$ and $c$  be real numbers. Let $1=p\leq q<\infty$ and $\alpha,\beta \in \mathbb{R}$  with $\beta>-1$. The following are equivalent:
\begin{enumerate}
\item[(i)] $T_{bc}$ is bounded from $L^1_\alpha$ to $L^q_\beta$.
\item[(ii)] $S_{bc}$ is bounded from $L^1_\alpha$ to $L^q_\beta$.
\item[(iii)] $\alpha<b$ and $c\leq b+\dfrac{n+\beta}{q}-(n+\alpha)$ or $\alpha\leq b$ and $c< b+\dfrac{n+\beta}{q}-(n+\alpha)$
\end{enumerate}
\end{theorem}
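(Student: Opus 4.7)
The implication (ii)$\Rightarrow$(i) is immediate from the pointwise bound $|T_{bc}f(x)|\leq S_{bc}|f|(x)$, so the substance lies in the sufficiency (iii)$\Rightarrow$(ii) and the necessity (i)$\Rightarrow$(iii).

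For sufficiency, since $p=1$ the natural tool is Minkowski's integral inequality (Fubini--Tonelli when $q=1$), which gives
\[
\|S_{bc}f\|_{L^q_\beta}\leq \int_{\mathbb{B}}|f(y)|\,(1-|y|^2)^b\,\|R_c(\cdot,y)\|_{L^q_\beta}\,d\nu(y).
\]
It then suffices to prove the pointwise weight comparison $(1-|y|^2)^b\|R_c(\cdot,y)\|_{L^q_\beta}\lesssim (1-|y|^2)^\alpha$ on $\mathbb{B}$. By the kernel estimate (Lemma \ref{Lemma-Kernel-Estimate}) this reduces to a Forelli--Rudin integral of the form $\int_{\mathbb{B}}[x,y]^{-q(n+c)}(1-|x|^2)^\beta d\nu(x)$, whose value as a function of $|y|$ is bounded, logarithmic, or a negative power of $1-|y|^2$ according to whether $q(n+c)$ is less than, equal to, or greater than $n+\beta$. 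These three regimes match exactly with the three mutually exclusive pieces of (iii): both inequalities strict (slack in both slots); $\alpha<b$ with $c$ at the critical value (the logarithm is absorbed by the strict gap in $b-\alpha$); and $\alpha=b$ with $c$ strictly subcritical (pure power-law cancellation at the boundary). Each case is verified by direct computation.

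For necessity, I would test the assumed boundedness on approximate point masses. For $y_0\in \mathbb{B}$ let $E_{y_0}=B\big(y_0,\tfrac12(1-|y_0|)\big)$ and set $f_{y_0}=\chi_{E_{y_0}}/\nu(E_{y_0})$. Since $\nu(E_{y_0})\asymp (1-|y_0|^2)^n$ one has $\|f_{y_0}\|_{L^1_\alpha}\asymp(1-|y_0|^2)^\alpha$, while the uniform continuity of $R_c$ on such shrinking balls gives $R_c(x,y)\asymp R_c(x,y_0)$ for $y\in E_{y_0}$ provided $x$ lies outside a comparable enlargement, and hence $|T_{bc}f_{y_0}(x)|\asymp(1-|y_0|^2)^b|R_c(x,y_0)|$ on the region which carries most of the $L^q_\beta$-norm. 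The assumed boundedness of $T_{bc}$ therefore forces
\[
(1-|y_0|^2)^b\,\|R_c(\cdot,y_0)\|_{L^q_\beta}\lesssim(1-|y_0|^2)^\alpha,
\]
and letting $|y_0|\to 1^-$ while reading off the sharp Forelli--Rudin asymptotics produces both $\alpha\leq b$ and $c\leq b+(n+\beta)/q-(n+\alpha)$; the logarithmic divergence at the doubly-critical case excludes both equalities at once, which is the dichotomy in (iii).

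The main obstacle I anticipate is the fine bookkeeping of the Forelli--Rudin regimes matched to the two alternatives in (iii) at the critical exponents, compounded by the fact that the kernel estimate itself changes qualitative form as $n+c$ passes through zero. Controlling the oscillating sign of $R_c$ in the test-function lower bound is a secondary nuisance; here the author's announced use of precise Bergman--Besov/Bloch inclusion relations (applied to the harmonic image $T_{bc}f\in h(\mathbb{B})$) can be invoked in place of the naive bump construction sketched above, which is presumably why that technique is advertised in the abstract.
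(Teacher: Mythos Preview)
Your sufficiency argument coincides with the paper's: for $q=1$ Fubini, for $q>1$ the Minkowski integral inequality (the paper's Lemma~\ref{Minkowski int-inequality}), followed by the Forelli--Rudin estimate on $\int_{\mathbb{B}}[x,y]^{-q(n+c)}(1-|x|^2)^\beta\,d\nu(x)$ and a three-case check. One technical point you glide over: the kernel bound $|R_c(x,y)|\lesssim[x,y]^{-(n+c)}$ is only available for $c>-n$, and the paper first reduces to that range via the monotonicity $|R_c|\lesssim|R_d|$ for $c<d$ (Corollary~\ref{Corollary-Lemma-Kernel-comparasion}). Also, your matching of the three Forelli--Rudin regimes to the three ``pieces'' of (iii) is slightly garbled: the logarithmic regime occurs at $q(n+c)=n+\beta$, which is the critical value of $c$ only when $\alpha=b$; when $\alpha<b$ and $c$ is critical one is in the power-law regime with exact cancellation. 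The paper simply checks each of the three $\rho$-regimes separately against whichever half of (iii) is active.

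Your necessity argument is a genuinely different route from the paper's. The paper obtains $\alpha\leq b$ by testing on $f_{u0}=(1-|x|^2)^u$ (Theorem~\ref{the first necessary condition }); then obtains the inequality on $c$ by composing $T_{bc}$ with $I_b^t$ and $D_c^{b-c}$ so that the composite becomes the identity on harmonic functions, forcing the inclusion $b^1_\alpha\subset b^q_{\beta+q(b-c)}$, and reading off the bound from Theorem~\ref{Besov-Besov2}; finally it excludes double equality by passing to the adjoint and either testing on $|R_\beta|/R_\beta$ ($q=1$) or producing an unbounded element of $b^{q'}_{-n}$ ($q>1$). Your bump-function approach is more elementary and delivers all three conclusions at once, but as written has a loose joint: with $f_{y_0}=\chi_{E_{y_0}}/\nu(E_{y_0})$ the weight $(1-|y|^2)^b$ remains in the integrand and the signed kernel prevents any clean $\asymp$. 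The fix is to take instead $f_{y_0}(y)=\chi_{E_{y_0}}(y)\big/\big[(1-|y|^2)^b\nu(E_{y_0})\big]$; then the mean-value property of the harmonic function $R_c(x,\cdot)$ gives $T_{bc}f_{y_0}(x)=R_c(x,y_0)$ \emph{exactly}, while $\|f_{y_0}\|_{L^1_\alpha}\asymp(1-|y_0|^2)^{\alpha-b}$, and Lemma~\ref{norm-kernel} then yields the full dichotomy in (iii). The paper itself uses precisely this weighted-bump device in its proof of Theorem~\ref{the strict inequality condition} for the endpoint $q=\infty$.
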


Now, we consider the case $1\leq q <p <\infty$.
\begin{theorem}\label{Theorem-Boundedness of T,S 2}
Let $b$ and $c$  be real numbers. Let $1\leq q <p< \infty$ and  $\alpha,\beta \in \mathbb{R}$  with $\beta>-1$. The following are equivalent:
\begin{enumerate}
\item[(i)] $T_{bc}$ is bounded from $L^p_\alpha$ to $L^q_\beta$.
\item[(ii)] $S_{bc}$ is bounded from $L^p_\alpha$ to $L^q_\beta$.
\item[(iii)] $\alpha+1<p(b+1)$ and $c< b+\dfrac{1+\beta}{q}-\dfrac{1+\alpha}{p}$.
\end{enumerate}
\end{theorem}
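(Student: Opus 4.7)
Since $|T_{bc}f(x)|\le S_{bc}|f|(x)$ pointwise, the implication $(ii)\Rightarrow(i)$ is trivial, so I would concentrate on $(iii)\Rightarrow(ii)$ and $(i)\Rightarrow(iii)$.

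For sufficiency, a pure Schur test is not available because $q<p$, so the plan is to combine two Hölder inequalities with Fubini and Forelli--Rudin estimates. First, split $(1-|y|^{2})^{b}=(1-|y|^{2})^{b_{1}}(1-|y|^{2})^{b_{2}}$ and apply Hölder in $y$ with exponents $p'$ and $p$ to write
\[
S_{bc}f(x)\le\Bigl(\int_{\mathbb{B}}|R_{c}(x,y)|(1-|y|^{2})^{b_{1}p'}d\nu(y)\Bigr)^{1/p'}\Bigl(\int_{\mathbb{B}}|R_{c}(x,y)|(1-|y|^{2})^{b_{2}p}|f(y)|^{p}d\nu(y)\Bigr)^{1/p}.
\]
Using $|R_{c}(x,y)|\lesssim[x,y]^{-(n+c)}$ together with a Forelli--Rudin estimate, the first factor is controlled by a power $(1-|x|^{2})^{-\tau}$. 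Raise to the $q$-th power, integrate against $(1-|x|^{2})^{\beta}d\nu(x)$, and now crucially use $p/q>1$ to apply a second Hölder in $x$ with exponents $p/q$ and $p/(p-q)$. The first factor there becomes a pure weighted volume integral $\int(1-|x|^{2})^{s}d\nu(x)$ whose convergence is exactly the \emph{strict} inequality $c<b+(1+\beta)/q-(1+\alpha)/p$; the second factor, after Fubini and one more Forelli--Rudin estimate in $x$, yields $\|f\|_{L^{p}_{\alpha}}^{q}$ once $\alpha+1<p(b+1)$ and the split $b_{1}+b_{2}=b$ is chosen consistently.

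For necessity I would follow the testing philosophy of Kaptano\u{g}lu and \"{U}reyen highlighted in the introduction. Plug into $T_{bc}$ a one--parameter family of localised bounded functions modelled on $(1-|y|^{2})^{s}$, admissible in $L^{p}_{\alpha}$ precisely for $s>-(1+\alpha)/p$. Combining with the sharp asymptotics of $R_{c}$ on a set where $[x,y]\asymp 1-|x|^{2}$ yields $|T_{bc}f_{s}(x)|\gtrsim(1-|x|^{2})^{s+b-c}$, and imposing that this lower bound belong to $L^{q}_{\beta}$ gives $q(s+b-c)+\beta+1>0$. Letting $s\downarrow -(1+\alpha)/p$ forces $\alpha+1<p(b+1)$ (strict because the endpoint itself is not admissible) together with the strict inequality $c<b+(1+\beta)/q-(1+\alpha)/p$. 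At the critical value $c=b+(1+\beta)/q-(1+\alpha)/p$ the sharp harmonic Bergman--Besov versus weighted Bloch inclusion supplies an explicit test function certifying that $T_{bc}$ is already unbounded, which is exactly the reason the inequality must be strict.

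The main obstacle I anticipate is the strictness of the $c$-condition, a genuinely new feature of the regime $q<p$ that distinguishes Theorem 1.3 from Theorem 1.1 and is precisely where the author's Bergman--Besov/Bloch inclusion technique is indispensable. Secondary care is needed in the sufficiency to align $b_{1}$ and $b_{2}$ with all three Forelli--Rudin estimates simultaneously and, potentially, to handle the kernel branch $\alpha\le-(1+n/2)$ in (\ref{gamma k q-Definition}) via the identity-between-kernels machinery developed earlier in the paper.
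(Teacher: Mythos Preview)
Your sufficiency sketch is plausible, but your opening premise is wrong: a Schur test \emph{is} available when $q<p$, namely the three-condition version due to Gagliardo (Theorem~\ref{Schur test 2}), and that is exactly what the paper uses for $1<q<p<\infty$. The paper takes $\phi(x)=(1-|x|^{2})^{A}$, $\psi(y)=(1-|y|^{2})^{B}$, solves a $2\times2$ linear system for $A,B$, and verifies all three Schur conditions via Lemma~\ref{Integral-[x,y]}; the endpoint $q=1$ is handled separately by Fubini and a single H\"older inequality. Your double-H\"older route can likely be pushed through, but you have not specified the split $b=b_{1}+b_{2}$ (and the companion split of $(1-|x|^{2})^{\beta}$) so that all three Forelli--Rudin estimates apply simultaneously; that bookkeeping is essentially the whole proof, and without it the argument is incomplete.

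Your necessity argument has a genuine gap. For a radial test function $f_{s}(y)=(1-|y|^{2})^{s}$ with $b+s>-1$, the mean-value property of the harmonic kernel gives
\[
T_{bc}f_{s}(x)=\int_{\mathbb{B}}R_{c}(x,y)(1-|y|^{2})^{b+s}\,d\nu(y)=R_{c}(x,0)\int_{0}^{1}nt^{n-1}(1-t^{2})^{b+s}\,dt,
\]
which is a \emph{constant} (this is Lemma~\ref{when Tfuv is finite}); your claimed lower bound $|T_{bc}f_{s}(x)|\gtrsim(1-|x|^{2})^{s+b-c}$ is therefore false. That bound holds for $S_{bc}f_{s}$, but you must deduce (iii) from boundedness of $T_{bc}$, not of $S_{bc}$. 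The paper bypasses test functions entirely for the condition on $c$: since $T_{bc}$ maps into harmonic functions, one has $T_{bc}:L^{p}_{\alpha}\to b^{q}_{\beta}$, and Lemma~\ref{Composition of T, Dst} shows that $D^{b-c}_{c}\circ T_{bc}\circ I^{t}_{b}$ restricted to $b^{p}_{\alpha}$ is a constant multiple of the identity. Hence boundedness of $T_{bc}$ forces the inclusion $b^{p}_{\alpha}\hookrightarrow b^{q}_{\beta+q(b-c)}$, and the sharp inclusion Theorem~\ref{Besov-Besov1} (which for $q<p$ already carries a \emph{strict} inequality) yields $c<b+(1+\beta)/q-(1+\alpha)/p$ directly. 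The strictness is not obtained by a separate endpoint analysis as you suggest; it is inherited from the strictness in the Besov inclusion.
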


We consider the case when either $p$ or $q$ is $\infty$ in the following four theorems. The special case $ p=q=\infty$ and $\alpha=\beta$  is considered earlier in \cite{DU1} where they used the operators which contain an extra factor $(1-|x|^{2})^{a}$ and an extra constraints that $c=b+a$.
\begin{theorem}\label{Theorem-Boundedness of T,S 3.1}
Let $b$ and $c$ be real numbers. Let $1<p<\infty$ and $\alpha,\beta \in \mathbb{R}$  with $\beta\geq0$. The following are equivalent:
\begin{enumerate}
\item[(i)] $T_{bc}$ is bounded from $L^p_\alpha$ to $\mathcal{L}^\infty_\beta$.
\item[(ii)] $S_{bc}$ is bounded from $L^p_\alpha$ to $\mathcal{L}^\infty_\beta$.
\item[(iii)] $\alpha+1<p(b+1)$ and $c\leq b+\beta-\dfrac{n+\alpha}{p}$, and the strict inequality holds  when $\beta=0$
\end{enumerate}
\end{theorem}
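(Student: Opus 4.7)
\smallskip

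The plan is to establish the three-way equivalence via the cycle $(iii)\Rightarrow(ii)\Rightarrow(i)\Rightarrow(iii)$. The middle implication $(ii)\Rightarrow(i)$ is immediate from the pointwise bound $|T_{bc}f(x)|\le S_{bc}|f|(x)$ together with $\||f|\|_{L^p_\alpha}=\|f\|_{L^p_\alpha}$.

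For $(iii)\Rightarrow(ii)$, I would carry out a standard H\"older-plus-Forelli-Rudin estimate. Splitting the integrand $|R_c(x,y)|(1-|y|^2)^b|f(y)|$ as the product $[(1-|y|^2)^{\alpha/p}|f(y)|]\cdot[|R_c(x,y)|(1-|y|^2)^{b-\alpha/p}]$ and applying H\"older's inequality with conjugate exponents $p$ and $p'$, the boundedness of $S_{bc}$ reduces to
\[
(1-|x|^2)^{\beta p'}\int_{\mathbb{B}}|R_c(x,y)|^{p'}(1-|y|^2)^{(b-\alpha/p)p'}\,d\nu(y)\le C\qquad(x\in\mathbb{B}).
\]
The first hypothesis of $(iii)$ is precisely $(b-\alpha/p)p'>-1$, ensuring the weight is integrable at $\partial\mathbb{B}$. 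Using the kernel estimate $|R_c(x,y)|\lesssim [x,y]^{-(n+c)}$ quoted in the introduction (with a log-bounded or bounded kernel in the cases $c=-n$ and $c<-n$) together with the Forelli-Rudin asymptotics for $\int_{\mathbb{B}}[x,y]^{-s}(1-|y|^2)^a\,d\nu(y)$, a case analysis on the sign of $s-n-a$ with $s=(n+c)p'$ and $a=(b-\alpha/p)p'$ shows that the second inequality in $(iii)$ is exactly what keeps the above display bounded; the strict form at $\beta=0$ is forced by the borderline logarithmic regime, which the trivial weight $1$ cannot absorb.

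For the hard direction $(i)\Rightarrow(iii)$, I would follow the Kaptano\u{g}lu-\"Ureyen philosophy (cf.\ \cite{KU1}) and rely on the precise inclusion relations between harmonic Bergman-Besov spaces and weighted harmonic Bloch spaces. Since zonal-harmonic orthogonality on $\mathbb{S}$ collapses $T_{bc}((1-|\cdot|^2)^t)$ to a constant, pure-power test functions are useless; instead I would test on kernel-adapted families such as $R_s(w,y)$ or $(1-|y|^2)^t R_s(w,y)$ with an auxiliary parameter $s$ chosen so that both the $L^p_\alpha$-norm of the test function and the image under $T_{bc}$ admit clean Forelli-Rudin-reproducing evaluations. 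Inserting such $f_w$ into the boundedness hypothesis, evaluating at $x=w$, and letting $|w|\to 1^-$, the sharp growth rates in $(1-|w|^2)$ on the two sides must be compatible; via the inclusion relations this forces both $\alpha+1<p(b+1)$ (from integrability of the dual weight) and $c\le b+\beta-(n+\alpha)/p$, with the borderline logarithm of Forelli-Rudin once again dictating strictness at $\beta=0$. The principal obstacle is precisely this necessity step: the oscillating sign of $R_c$ makes naive test-function arguments for $T_{bc}$ collapse under cancellation, so recovering the sharp thresholds, and in particular isolating the $\beta=0$ strict/nonstrict distinction, demands the kernel-adapted test families in concert with the inclusion relations referenced in the introduction.
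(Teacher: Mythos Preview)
Your sufficiency argument $(iii)\Rightarrow(ii)$ is exactly the paper's: H\"older against $d\nu_\alpha$ followed by the Forelli--Rudin estimate of Lemma~\ref{Integral-[x,y]}, with the same three-case split on the critical exponent.

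For $(i)\Rightarrow(iii)$ your sketch both diverges from the paper and has gaps. First, dismissing radial test functions is a mistake: the paper obtains the first inequality $\alpha+1<p(b+1)$ precisely from $f_{uv}(y)=(1-|y|^2)^{-(1+\alpha)/p}\bigl(1+\log\tfrac{1}{1-|y|^2}\bigr)^{-1}\in L^p_\alpha$ (Theorem~\ref{the first necessary condition }). That $T_{bc}f_{uv}$ collapses to a constant via the mean value property is the mechanism, not an obstruction --- by Lemma~\ref{when Tfuv is finite} that constant equals $+\infty$ on a neighborhood of $0$ unless $b-(1+\alpha)/p>-1$. Your kernel-adapted families $f_w=(1-|\cdot|^2)^tR_s(w,\cdot)$ do not recover this: comparing $T_{bc}f_w(w)$ to $\|f_w\|_{L^p_\alpha}$ as $|w|\to1^-$ produces only the second inequality, and ``integrability of the dual weight'' is not a substitute argument you have spelled out.

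Second, the paper does not obtain the inequality on $c$ by point evaluation at $x=w$. It uses the first inequality (already in hand) together with Lemma~\ref{Composition of T, Dst} to build the bounded composition
\[
b^p_{\alpha+p(c-b)}\xrightarrow{\,D_c^{b-c}\,}b^p_\alpha\xrightarrow{\,I_b^t\,}L^p_\alpha\xrightarrow{\,T_{bc}\,}b^\infty_\beta\quad(\text{or }h^\infty\text{ when }\beta=0),
\]
so that $b^p_{\alpha+p(c-b)}$ embeds into $b^\infty_\beta$ (resp.\ $h^\infty$); this is where the inclusion relations actually enter. Theorem~\ref{Besov-Bloch}(ii) then gives $c\le b+\beta-(n+\alpha)/p$ for $\beta>0$, while Theorem~\ref{hinfinity2} forces the \emph{strict} inequality when $\beta=0$ because $b^p_\gamma\subset h^\infty$ requires $\gamma<-n$ for $p>1$. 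Your growth-rate comparison, by contrast, matches only powers of $1-|w|^2$ and cannot separate $c=b-(n+\alpha)/p$ from $c<b-(n+\alpha)/p$ at $\beta=0$; no Forelli--Rudin logarithm appears on either side of that comparison when $c>-n$.
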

Again, we have to treat the case $p=1$ separately.
\begin{theorem}\label{Theorem-Boundedness of T,S 3.2}
Let $b$ and $c$  be real numbers. Let  $\alpha,\beta \in \mathbb{R}$  with $\beta\geq0$. The following are equivalent:
\begin{enumerate}
\item[(i)] $T_{bc}$ is bounded from $L^1_\alpha$ to $\mathcal{L}^\infty_\beta$.
\item[(ii)] $S_{bc}$ is bounded from $L^1_\alpha$ to $\mathcal{L}^\infty_\beta$.
\item[(iii)] $\alpha<b$ and $c\leq b+\beta-(n+\alpha)$ or $\alpha\leq b$ and $c< b+\beta-(n+\alpha)$
\end{enumerate}
\end{theorem}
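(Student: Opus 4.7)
My plan is to establish the circle $(\text{ii})\Rightarrow(\text{i})\Rightarrow(\text{iii})\Rightarrow(\text{ii})$. The implication $(\text{ii})\Rightarrow(\text{i})$ is immediate from the pointwise inequality $|T_{bc}f|\leq S_{bc}|f|$.

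For $(\text{iii})\Rightarrow(\text{ii})$ I would exploit that, since $p=1$ and $q=\infty$, boundedness of the positive operator $S_{bc}$ from $L^1_\alpha$ to $\mathcal{L}^\infty_\beta$ is equivalent to a uniform pointwise bound on the weighted kernel
\[
K(x,y):=(1-|x|^2)^\beta\,|R_c(x,y)|\,(1-|y|^2)^{b-\alpha}.
\]
Indeed, if $M:=\sup_{x,y\in\mathbb{B}}K(x,y)<\infty$, then for $f\in L^1_\alpha$,
\[
(1-|x|^2)^\beta S_{bc}f(x)\leq M\int_{\mathbb{B}}|f(y)|(1-|y|^2)^\alpha\,d\nu(y)=MV_\alpha\,\|f\|_{L^1_\alpha},
\]
which is exactly (ii). To verify $M<\infty$ under (iii), I would plug in the kernel estimate recalled before Theorem~\ref{Theorem-Boundedness of T,S 1.1} and split into the cases $c>-n$, $c=-n$, and $c<-n$. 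The basic inequalities $[x,y]\gtrsim 1-|x|^2$, $[x,y]\gtrsim 1-|y|^2$, and $[x,y]^2\gtrsim(1-|x|^2)(1-|y|^2)$ let me redistribute the outer weights against the singularity of $R_c$; the strict inequality present in one of the two clauses of (iii) is precisely what absorbs the critical factors at the borderline, in particular the logarithm that appears when $c=-n$ and $\beta=0$.

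For the necessity $(\text{i})\Rightarrow(\text{iii})$ I would combine two types of test. The first uses the reproducing property of $R_c$: for harmonic $g$ in the appropriate Bergman-Besov space, $T_{bc}\bigl(g(\cdot)(1-|\cdot|^2)^{c-b}\bigr)=V_c\,g$, so the boundedness of $T_{bc}\colon L^1_\alpha\to\mathcal{L}^\infty_\beta$ yields the continuous inclusion $b^1_{c-b+\alpha}\hookrightarrow\mathcal{L}^\infty_\beta\cap h(\mathbb{B})$ (with the Bergman-Besov extension when $c-b+\alpha\leq -1$). The precise inclusion relations between harmonic Bergman-Besov and weighted Bloch-type spaces---the \emph{fruitful technique} imported from \cite{KU1} and advertised in the abstract---translate this inclusion into $c\leq b+\beta-(n+\alpha)$. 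The second test uses approximate Dirac masses $f_r$ concentrated in a shrinking pseudo-hyperbolic ball around $y$: evaluating $T_{bc}f_r$ at $x=0$, where $R_c(0,y)=\gamma_0(c)=1$ by the formula (\ref{gamma k q-Definition}), and letting $|y|\to 1$, one extracts a factor $(1-|y|^2)^{b-\alpha}$ whose required boundedness forces $\alpha\leq b$.

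The main obstacle is ruling out the simultaneous endpoint $\alpha=b$ and $c=b+\beta-(n+\alpha)$, which is what gives (iii) its disjunctive form. At this corner the Dirac and inclusion tests above only produce the non-strict inequalities, and a sharpness result for the inclusion $b^1_{c-b+\alpha}\hookrightarrow\mathcal{L}^\infty_\beta\cap h(\mathbb{B})$ is required. Following the Kaptano\u{g}lu--\"{U}reyen blueprint, I would construct a family of harmonic functions in $b^1_{c-b+\alpha}$---for instance, normalized translates of $R_c(\cdot,w)$ as $|w|\to 1$, or block sequences built from the zonal expansion (\ref{Rq - Series expansion})---whose $L^1$-type norms stay uniformly bounded while their $\mathcal{L}^\infty_\beta$ norms diverge at the critical exponent, forcing at least one of the inequalities in (iii) to be strict. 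This sharpness construction is the delicate technical heart of the proof and is what distinguishes the $p=1$ endpoint Theorem~\ref{Theorem-Boundedness of T,S 3.2} from its $p>1$ counterpart Theorem~\ref{Theorem-Boundedness of T,S 3.1}.
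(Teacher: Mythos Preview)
Your sufficiency argument $(\text{iii})\Rightarrow(\text{ii})$ via a uniform bound on the weighted kernel, and your derivation of the two non-strict inequalities $\alpha\le b$ and $c\le b+\beta-(n+\alpha)$, match the paper's approach closely. The paper uses radial test functions $f_{u0}(x)=(1-|x|^2)^u$ rather than Dirac masses for the first inequality, and packages the inclusion argument for the second as the composition $T_{bc}\circ I_b^t\circ D_c^{b-c}$ of Lemma~\ref{Composition of T, Dst}, but the substance is the same.

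The genuine gap is at the double endpoint $\alpha=b$, $c=\beta-n$. You propose to exclude it by producing harmonic functions in $b^1_{c-b+\alpha}=b^1_{\beta-n}$ whose $b^1$-norms stay bounded while their $\mathcal L^\infty_\beta$-norms diverge, i.e., by showing that the inclusion $b^1_{\beta-n}\hookrightarrow b^\infty_\beta$ (or $b^1_{-n}\hookrightarrow h^\infty$ when $\beta=0$) fails. But it does not fail: the equality case $\beta=(\beta-n+n)/1$ of Theorem~\ref{Besov-Bloch}(ii), and the case $\alpha=-n$, $p=1$ of Theorem~\ref{hinfinity2}, say precisely that these endpoint inclusions are continuous. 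Hence no such harmonic family exists, and the inclusion route cannot separate the corner from the rest of~(iii). The paper's treatment of the corner (Theorem~\ref{the strict inequality condition}) therefore abandons the inclusion framework and acts directly with $T_{bc}$ on the \emph{non-harmonic} test functions $f_i=V_\alpha\chi_{E_i}/\bigl(\nu(E_i)(1-|x|^2)^\alpha\bigr)$, where $E_i$ is a small Euclidean ball centred at $x_i\to\partial\mathbb B$: the mean value property of $R_c$ in its second variable gives $T_{bc}f_i(y)=V_\alpha R_{\beta-n}(y,x_i)$ exactly, and the diagonal asymptotics of $R_{\beta-n}$ (Lemma~\ref{x-equal-y}) are then invoked. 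This is in fact your own Dirac-mass idea, only evaluated at the concentration point $x_i$ rather than at the origin.
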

\begin{theorem}\label{Theorem-Boundedness of T,S 3.3}
Let $b$ and $c$ be real numbers. Let $1\leq q<\infty$ and  $\alpha,\beta \in \mathbb{R}$  with $\beta>-1$. The following are equivalent:
\begin{enumerate}
\item[(i)] $T_{bc}$ is bounded from $\mathcal{L}^\infty_\alpha$ to $L^q_\beta$.
\item[(ii)] $S_{bc}$ is bounded from $\mathcal{L}^\infty_\alpha$ to $L^q_\beta$.
\item[(iii)] $\alpha-1<b$ and $c<b+\dfrac{\beta+1}{q}-\alpha$.
\end{enumerate}
\end{theorem}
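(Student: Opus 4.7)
The plan is to close the cycle (iii)$\,\Rightarrow\,$(ii)$\,\Rightarrow\,$(i)$\,\Rightarrow\,$(iii). The implication (ii)$\,\Rightarrow\,$(i) is essentially tautological: from $|T_{bc}f|\le S_{bc}|f|$ pointwise and the obvious equality of the $\mathcal{L}^\infty_\alpha$-norms of $f$ and $|f|$, monotonicity of the $L^q_\beta$-norm yields $\|T_{bc}f\|_{L^q_\beta}\le\|S_{bc}\|_{\mathcal{L}^\infty_\alpha\to L^q_\beta}\,\|f\|_{\mathcal{L}^\infty_\alpha}$.

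For (iii)$\,\Rightarrow\,$(ii) I would start from the pointwise bound $|f(y)|\le\|f\|_{\mathcal{L}^\infty_\alpha}(1-|y|^2)^{-\alpha}$ intrinsic to the definition of $\mathcal{L}^\infty_\alpha$, reducing the matter to controlling $\int_{\mathbb{B}}|R_c(x,y)|(1-|y|^2)^{b-\alpha}\,d\nu(y)$. Combining the kernel estimate $|R_c(x,y)|\lesssim 1/[x,y]^{c+n}$ of Lemma~\ref{Lemma-Kernel-Estimate} (for $c>-n$; the case $c\le-n$ is easier because the kernel is then essentially bounded) with the standard Forelli--Rudin trichotomy---applicable thanks to $b-\alpha>-1$ from the first half of (iii)---$S_{bc}f(x)$ is bounded by $1$, by $\log\bigl(1/(1-|x|^2)\bigr)$, or by a constant times $(1-|x|^2)^{b-\alpha-c}$, according to whether $c$ is less than, equal to, or greater than $b-\alpha$. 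Since $\beta>-1$ absorbs the first two cases, only the third constrains parameters, and $\|S_{bc}f\|_{L^q_\beta}<\infty$ there amounts exactly to $q(b-\alpha-c)+\beta>-1$, i.e., to $c<b+(\beta+1)/q-\alpha$.

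The main obstacle is the necessity (i)$\,\Rightarrow\,$(iii), which must cope with cancellations in the signed kernel $R_c$. Following the philosophy emphasized in the abstract, my plan is to exploit the precise inclusion relations between harmonic Bergman-Besov spaces and weighted Bloch-type classes developed earlier in the paper in order to compare $\mathcal{L}^\infty_\alpha$ with a suitable $b^p_s$ and thereby reduce the matter to testing against the natural probes $R_d(w,\cdot)$ as $|w|\to 1$. The norm $\|R_d(w,\cdot)\|_{\mathcal{L}^\infty_\alpha}$ is read off from Lemma~\ref{Lemma-Kernel-Estimate}, and $T_{bc}R_d(w,\cdot)$ becomes, via a reproducing-kernel identity, a shifted Bergman-Besov kernel whose $L^q_\beta$-norm is again a Forelli--Rudin integral; matching the two asymptotics as $|w|\to 1$ forces both $\alpha-1<b$ and $c<b+(\beta+1)/q-\alpha$, the strictness in the second coming from the logarithmic borderline of Forelli--Rudin. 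The delicate point is selecting $d$ so that $R_d(w,\cdot)$ lies in $\mathcal{L}^\infty_\alpha$ with a quantifiable norm and so that $T_{bc}R_d(w,\cdot)$ is pushed into the extremal regime that produces sharp (not merely sufficient) inequalities.
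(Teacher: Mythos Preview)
Your sufficiency argument (iii)$\Rightarrow$(ii) is correct and coincides with the paper's: pull $\|f\|_{\mathcal L^\infty_\alpha}$ out, replace $|R_c|$ by $[x,y]^{-(n+c)}$, and invoke the Forelli--Rudin estimate (Lemma~\ref{Integral-[x,y]}) with exponent $b-\alpha>-1$. The paper separates $q=1$ (Fubini first) from $1<q<\infty$, but the substance is the same.

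Your necessity argument, however, has a genuine gap. Testing against the single probes $R_d(w,\cdot)$ cannot yield the sharp bound $c<b+(\beta+1)/q-\alpha$; it only gives the weaker $c\le b+(n+\beta)/q-\alpha$. Indeed (taking $d=b$, $b>-1$, $\alpha\ge 0$ so that everything is defined), one has $\|R_b(w,\cdot)\|_{\mathcal L^\infty_\alpha}\sim(1-|w|^2)^{\alpha-(n+b)}$ while $T_{bc}R_b(w,\cdot)=V_bR_c(w,\cdot)$ and, by Lemma~\ref{norm-kernel}, $\|R_c(w,\cdot)\|_{L^q_\beta}\sim(1-|w|^2)^{(n+\beta)/q-(n+c)}$ in the interesting regime. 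Comparing exponents as $|w|\to 1$ forces only $c\le b-\alpha+(n+\beta)/q$, leaving a deficit of $(n-1)/q>0$ from the claimed inequality. This is not an artifact of the chosen $d$: concentrated kernels are the extremals for the $p\le q$ inclusion scale, whereas here $p=\infty>q$ and the correct obstruction comes from ``spread-out'' functions, exactly as in the dichotomy between Theorems~\ref{Besov-Besov1} and~\ref{Besov-Besov2}. There is also a secondary problem: for $\alpha<0$ the kernels $R_d(w,\cdot)$ do not even belong to $\mathcal L^\infty_\alpha$, since $R_d(w,y)$ does not vanish as $|y|\to 1$. Finally, your sketch does not produce $\alpha-1<b$; the reproducing identity you invoke already presupposes $b>-1$.

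The paper's route is different on both counts. For $\alpha-1<b$ it tests against $f_{-\alpha,0}(y)=(1-|y|^2)^{-\alpha}\in\mathcal L^\infty_\alpha$: by Lemma~\ref{when Tfuv is finite}, $T_{bc}f_{-\alpha,0}$ is finite only when $b-\alpha>-1$. For the second inequality it does \emph{not} test at all; rather, once $\alpha-1<b$ is known, it considers the bounded composition
\[
b^\infty_\alpha \xrightarrow{\,I_b^t\,} \mathcal L^\infty_\alpha \xrightarrow{\,T_{bc}\,} b^q_\beta \xrightarrow{\,D_c^{b-c}\,} b^q_{\beta+q(b-c)},
\]
which by Lemma~\ref{Composition of T, Dst} is a constant multiple of the identity. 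Hence $b^\infty_\alpha\subset b^q_{\beta+q(b-c)}$, and Theorem~\ref{Besov-Bloch}(i) then gives $\alpha<(\beta+q(b-c)+1)/q$, i.e., $c<b+(\beta+1)/q-\alpha$ with the strict inequality built in. This is precisely the ``inclusion-relation'' technique advertised in the abstract; it uses the inclusion theorems as the final step, not as a device to manufacture test functions.
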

\begin{theorem}\label{Theorem-Boundedness of T,S 3.4}
Let $b$ and $c$ be real numbers.  Let  $\alpha,\beta \in \mathbb{R}$  with $\beta\geq0$. The following are equivalent:
\begin{enumerate}
\item[(i)] $T_{bc}$ is bounded from $\mathcal{L}^\infty_\alpha$ to $\mathcal{L}^\infty_\beta$.
\item[(ii)] $S_{bc}$ is bounded from $\mathcal{L}^\infty_\alpha$ to $\mathcal{L}^\infty_\beta$.
\item[(iii)]  $\alpha-1<b$ and $c\leq b+\beta-\alpha$, and the strict inequality holds when $\beta=0$.
\end{enumerate}
\end{theorem}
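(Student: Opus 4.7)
The plan is to establish $(\mathrm{ii}) \Rightarrow (\mathrm{i})$ trivially from $|T_{bc}f| \leq S_{bc}|f|$, $(\mathrm{iii}) \Rightarrow (\mathrm{ii})$ by direct pointwise estimation, and $(\mathrm{i}) \Rightarrow (\mathrm{iii})$ by a test-function argument that exploits the inclusion relations between harmonic Bergman-Besov and weighted harmonic Bloch spaces emphasized in the introduction.

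For $(\mathrm{iii}) \Rightarrow (\mathrm{ii})$, the pointwise bound $|f(y)| \leq \|f\|_{\mathcal{L}^\infty_\alpha}(1-|y|^2)^{-\alpha}$ reduces the task to showing
\[
(1-|x|^2)^\beta \int_{\mathbb{B}} |R_c(x,y)|(1-|y|^2)^{b-\alpha}\,d\nu(y) \leq C.
\]
I would dominate $|R_c(x,y)|$ via the kernel estimate (Lemma \ref{Lemma-Kernel-Estimate}) and then invoke the Forelli-Rudin asymptotic
\[
\int_{\mathbb{B}} \frac{(1-|y|^2)^{b-\alpha}}{[x,y]^{n+c}}\,d\nu(y) \sim \begin{cases} 1 & c < b-\alpha, \\ -\log(1-|x|^2) & c = b-\alpha, \\ (1-|x|^2)^{b-\alpha-c} & c > b-\alpha, \end{cases}
\]
valid precisely when $b-\alpha > -1$. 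Thus $\alpha - 1 < b$ is what makes the estimate applicable, while $c \leq b+\beta-\alpha$ ensures that multiplication by $(1-|x|^2)^\beta$ produces a bounded function. At equality with $\beta>0$ we are in the power-decay regime and $(1-|x|^2)^\beta$ cancels the singularity exactly; at equality with $\beta = 0$ we land in the logarithmic regime which cannot be absorbed, forcing the strict inequality.

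For $(\mathrm{i}) \Rightarrow (\mathrm{iii})$, I would combine test functions with the inclusion-relation machinery. Necessity of $\alpha - 1 < b$ is obtained by testing with $f_0(y) = (1-|y|^2)^{-\alpha}$, which has $\|f_0\|_{\mathcal{L}^\infty_\alpha}=1$; the formal integrand of $S_{bc}f_0(x)$ is $|R_c(x,y)|(1-|y|^2)^{b-\alpha}$, and if $b-\alpha \leq -1$ this fails to be integrable by arguments using uniform kernel lower bounds on compact subsets of $\mathbb{B}$. Necessity of $c \leq b+\beta-\alpha$ is subtler. Since $R_c(\cdot,y)$ is harmonic, $T_{bc}$ actually maps $\mathcal{L}^\infty_\alpha$ into the weighted harmonic Bloch class $\mathcal{L}^\infty_\beta\cap h(\mathbb{B})$, and the precise inclusions between this Bloch scale and the Bergman-Besov spaces $b^p_s$, which the earlier sections of the paper make available, transfer the boundedness in question to a boundedness statement already characterized in Theorems \ref{Theorem-Boundedness of T,S 1.1}--\ref{Theorem-Boundedness of T,S 3.3}. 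An alternative is to use a family of explicit test functions built from $[a,y]^{-s}(1-|y|^2)^t$ with $\mathcal{L}^\infty_\alpha$-norms uniformly bounded in $a\in\mathbb{B}$, compute the leading Forelli-Rudin asymptotic of $S_{bc}f_a(a)$ as $|a|\to 1$, and compare with the required $(1-|a|^2)^{-\beta}$ growth.

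The main obstacle will be saturating the sharp endpoint of the parameter region, especially the dichotomy $c \leq b+\beta-\alpha$ for $\beta>0$ versus the strict inequality for $\beta=0$. The test family must be tuned finely enough that the Forelli-Rudin asymptotic is attained rather than undershot, and at $\beta=0$ the logarithmic singularity must genuinely surface in the boundary growth of $S_{bc}f_a$. This is precisely where the inclusion-relation technique is indispensable: it converts $\mathcal{L}^\infty$-to-$\mathcal{L}^\infty$ boundedness into Lebesgue-to-Lebesgue boundedness already characterized sharply in the earlier theorems, which closes the loop.
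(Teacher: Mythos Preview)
Your sufficiency argument $(\mathrm{iii})\Rightarrow(\mathrm{ii})$ is essentially the paper's: pull out the $\mathcal{L}^\infty_\alpha$ norm, dominate $|R_c|$ by $[x,y]^{-(n+c)}$ (after reducing to $c>-n$ via monotonicity in $c$, which you do not mention but which is needed to invoke the power bound in Lemma~\ref{Lemma-Kernel-Estimate}), and apply the Forelli-Rudin estimate; your three-case analysis is exactly what the paper carries out. Likewise your argument for $\alpha-1<b$ via the test function $(1-|y|^2)^{-\alpha}$ matches the paper's use of $f_{u0}$ with $u=-\alpha$ together with the lower bound $R_c(x,y)\geq 1/2$ for small $|x|$ (you should phrase this for $T_{bc}$ rather than $S_{bc}$, since you are proving $(\mathrm{i})\Rightarrow(\mathrm{iii})$; the point is that $R_c$ itself, not only $|R_c|$, is bounded below near the origin).

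Where your proposal diverges from the paper is in the necessity of the second inequality. You suggest transferring the $\mathcal{L}^\infty_\alpha\to\mathcal{L}^\infty_\beta$ boundedness to a statement already characterized in Theorems~\ref{Theorem-Boundedness of T,S 1.1}--\ref{Theorem-Boundedness of T,S 3.3} via inclusions of the ambient Lebesgue classes. This can indeed be made to work for $\beta>0$ (embed $\mathcal{L}^\infty_\beta\hookrightarrow L^q_{\beta'}$ for $\beta'>q\beta-1$, apply Theorem~\ref{Theorem-Boundedness of T,S 3.3}, and let $\beta'\to(q\beta-1)^+$), but for $\beta=0$ it yields only $c\leq b-\alpha$, not the required strict inequality. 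The paper does \emph{not} reduce to the earlier theorems. Instead it composes $T_{bc}$ with $I^t_b$ and $D_c^{b-c}$ and uses the identity $T_{bc}I^t_b D_c^{b-c}h=Ch$ (Lemma~\ref{Composition of T, Dst}) to conclude that the \emph{identity inclusion} $b^\infty_{\alpha+c-b}\hookrightarrow b^\infty_\beta$ (respectively into $h^\infty$ when $\beta=0$) is bounded; then the Bloch-scale inclusion~\eqref{Inclusion} gives $\alpha+c-b\leq\beta$, while for $\beta=0$ the containment $b^\infty_{\alpha+c-b}\subset h^\infty$ forces $\alpha+c-b<0$ by Theorem~\ref{hinfinity1}. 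This composition argument is the mechanism that resolves the $\beta=0$ dichotomy, and it is not the same as a reduction to the earlier $L^p\to L^q$ theorems. Your alternative test-function route would have to contend with the fact that the naive choice $f(y)=(1-|y|^2)^{-\alpha}$ fails at the endpoint: by the mean-value property $T_{bc}f$ is a \emph{constant} (Lemma~\ref{when Tfuv is finite}), so the logarithm visible in $S_{bc}f$ is cancelled in $T_{bc}f$.
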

It is clear that
\begin{equation*}
|T_{bc}f(x)|\leq S_{bc}(|f|)(x),
\end{equation*}
so the boundedness of $S_{bc}$ implies the boundedness of $T_{bc}$. Thus it is obvious that (ii) implies (i) in all of our theorems above. So "Necessity" and "Sufficiency" in the proofs refers to the implications (i)$\to$ (iii) and (ii)$\to$ (i), respectively.

\begin{remark}
Note the difference in the conditions on $c$ in parts (iii) of Theorems \ref{Theorem-Boundedness of T,S 1.1}--\ref{Theorem-Boundedness of T,S 3.4}. These conditions are connected with the inclusion relations between harmonic Bergman-Besov and weighted Bloch spaces (see Theorems \ref{Besov-Besov1}, \ref{Besov-Besov2} and  \ref{Besov-Bloch} below).
\end{remark}

\begin{remark}
The conditions $\beta>-1$ when $q<\infty$ and $\beta\geq 0$ when $q=\infty$ in our theorems cannot be removed as we clarify later in Corollary \ref{Remark-T,S with beta bigger than -1}. These constraints are consequences of the fact that $T_{bc}f$ is harmonic on $B$ and $|T_{bc}f|^{q}$ is subharmonic when  $q<\infty$, and by the maximum princible for harmonic functions when  $q=\infty$.
\end{remark}

The sufficiency proofs for all seven theorems are either by Schur tests or by direct
H\"{o}lder or Minkowski type inequalities which also make use of growth rate
estimates of Forelli-Rudin type integrals. The necessity proofs are by an
 original technique that most heavily depends on the precise inclusion relations between harmonic Bergman-Besov and weighted Bloch spaces on $\mathbb{B}$. This technique as many others borrowed from \cite{KU1} and have been modified to our kernels and spaces. We give all the sufficiency and necessity proofs in detail and it makes this paper more  self-contained.

This paper is organized as follows. In Section \ref{section-Preliminaries}, we collect some known facts about the harmonic Bergman-Besov and weighted Bloch spaces. In Section \ref{section-Kernels and Operators 1}, we insert  the main
operators in context and derive their basic properties  which we will need in the sequel.
 The corollary about the conditions $\beta>-1$ when $q<\infty$ and $\beta\geq 0$ when $q=\infty$  is also here. In  Section \ref{Section-Schur test}, we list the some important results that we apply in the proofs. As indicated before,  the proofs of Theorems \ref{Theorem-Boundedness of T,S 1.1}--\ref{Theorem-Boundedness of T,S 3.4} contain different methods which are interesting enough to be stated separately. Thus we prove necessity parts of these theorems  in Section \ref{Section-Necessity Proofs} and the sufficiency parts of these theorems in Section \ref{Section-Sufficiency Proofs}.

\section{ Preliminaries}\label{section-Preliminaries}
In multi-index notation, $m=(m_1,\dots,m_n)$ is an n-tuple of non-negative integers $m_1,\dots,m_n$ and
\begin{equation*}
 \partial^m f= \frac{\partial^{|m|} f}{\partial x_1^{m_1}\cdots\partial x_n^{m_n}}
\end{equation*}
is the usual partial derivative for smooth $f$, where $|m|=m_1+\dots+m_n$.

For $1\leq p\leq \infty$, we  denote the conjugate exponent of $p$ by $p'$. That is, if $1< p<\infty$, then $\frac{1}{p}+\frac{1}{p'}=1$; if $p=1$, then $p'=\infty$ and if $p=\infty$, then $p'=1$.

For two positive expressions $X$ and $Y$, we  write $X\sim Y$ if $X/Y$ is bounded  above and below by some positive constants. We will denote these constants whose exact values are inessential by a generic upper case $C$. We will also write $X\lesssim Y$ to mean $X\leq C Y$.

Now we clarify the notation used  in (\ref{gamma k q-Definition}) at the beginning. The Pochhammer symbol $(a)_b$ is given by
\begin{equation*}
 (a)_b=\frac{\Gamma(a+b)}{\Gamma(a)},
\end{equation*}
when $a$ and $a+b$ are off the pole set $-\mathbb{N}$ of the gamma function $\Gamma$. By the Stirling formula
\begin{equation}\label{Stirling}
  \frac{(a)_c}{(b)_c} \sim c^{a-b}, \quad c\to \infty.
\end{equation}

Let $f\in L^{1}_{0}$. The polar coordinates formula is
\begin{equation*}
\int_{\mathbb{B}}f(x) \ d\nu(x)=n\int_{0}^{1} \epsilon^{n-1}\int_{\mathbb{S}}f(\epsilon\zeta) \ d\sigma(\zeta) \ d\epsilon,
\end{equation*}
in which $x=\epsilon\zeta$ with $\epsilon>0$ and $\zeta \in \mathbb{S}$.

As  mentioned in the introduction, for $x,y\in \mathbb{B}$, we will use the notation
\begin{equation*}
  [x,y]=\sqrt{1-2 x\cdot y + |x|^2 |y|^2},
\end{equation*}
where $x\cdot y$ denotes the inner product of $x$ and $y$ in $\mathbb{R}^n$. It is elementary to show that the equalities
\begin{equation*}
  [x,y] = \Big| |y|x - \frac{y}{|y|}\Big| = \Big| |x|y - \frac{x}{|x|}\Big|,
\end{equation*}
hold for every nonzero $x,y$. Note that $0< 1-|x||y|\leq [x,y]\leq 1+|x||y|<2$ for $x,y \in \mathbb{B}$. Further,  we have $[x,\zeta]=|x-\zeta|$ when $y=\zeta\in \mathbb{S}$.

We show an integral inner product on a function space $A$  by $[\cdot,\cdot]_{A}$.

\subsection{Harmonic Bergman-Besov and Weighted Bloch Spaces}
It is well-known that $f\in h(\mathbb{B})$ has a homogeneous expansion
$f=\sum_{k=0}^{\infty} f_k$, where $f_k$ is a homogeneous harmonic polynomial of degree $k$, the series absolutely and uniformly converges on compact subsets of $\mathbb{B}$ (see \cite{ABR}).

The weighted harmonic Bergman spaces $b^p_\alpha$$(\alpha>-1)$ can be extended  to all $\alpha \in \mathbb{R}$. Thus, we resort to derivatives. For $\alpha\in\mathbb{R}$ and $0<p<\infty$, let $N$ be a non-negative integer such that
$\alpha+pN>-1$. The harmonic Bergman-Besov space $b^p_\alpha$ consists of all $f\in h(\mathbb{B})$ such that
\[
(1-|x|^2)^N \partial^m f \in L^p_\alpha,
\]
for every multi-index $m$ with $|m|=N$.

Roughly speaking the $``p=\infty"$ case of Bergman-Besov spaces $b^p_\alpha$ is the family of weighted Bloch spaces $b^\infty_\alpha$. Let $\alpha\in \mathbb{R}$. Pick a non-negative integer $N$ such that $\alpha+N>0$. The weighted harmonic Bloch space $b^\infty_\alpha$ consists of all $f\in h(\mathbb{B})$ such that
\[
 (1-|x|^2)^{N} \partial^m f \in \mathcal{L}^\infty_\alpha,
\]
for every multi-index $m$ with $|m|=N$. We mention one special case. When $\alpha=0$ taking $N=1$ shows
\[
b^\infty_0=\Big\{f\in h(\mathbb{B}): \sup_{x\in\mathbb{B}}\, (1-|x|^2)|\nabla f(x)| <\infty\Big\}.
\]
This is the most studied member of the family.

Partial derivatives are not convenient in studying the spaces of interest in this work and it is more advantageous to use certain radial differential operators $D^t_s: h(\mathbb{B}) \to h(\mathbb{B})$, $(s,t \in \mathbb{R})$ introduced in \cite{GKU1} and \cite{GKU2} that
are compatible with the kernels.

Before going to the definition, note that for every $\alpha\in \mathbb{R}$ we have $\gamma_{0} (\alpha)=1$, and therefore
\begin{equation}\label{Rq(x,0)}
R_\alpha(x,0)=R_\alpha(0,y)=1, \quad (x,y\in \mathbb{B}, \alpha\in \mathbb{R}).
\end{equation}
Checking the two cases in (\ref{gamma k q-Definition}), we have by (\ref{Stirling})
\begin{equation}\label{gamma-k-asymptotic}
\gamma_k(\alpha) \sim k^{1+\alpha} \quad (k\to \infty).
\end{equation}

\begin{definition}
Let $f=\sum_{k=0}^\infty f_k\in h(\mathbb{B})$ be given by its homogeneous expansion. For $s,t\in\mathbb{R}$ we define  $D_s^t : h(\mathbb{B}) \to h(\mathbb{B})$ by
\begin{equation}\label{Define-Dst}
  D_s^t f := \sum_{k=0}^\infty \frac{\gamma_k(s+t)}{\gamma_k(s)} \, f_k.
\end{equation}
\end{definition}
By (\ref{gamma-k-asymptotic}), $\gamma_k(s+t)/\gamma_k(s) \sim k^t$ for any $s,t$. So $D_s^t$ is a differential operator of order $t$. For every $s\in \mathbb{R}$, $D_s^0=I$, the identity. An important property of $D^t_s$ is that it is invertible with two-sided inverse $D_{s+t}^{-t}$:
\begin{equation}\label{inverse of Dst}
D^{-t}_{s+t} D^t_s = D^t_s D^{-t}_{s+t} = I,
\end{equation}
which follows from the additive property
\begin{equation}\label{Additive-Dst}
D_{s+t}^{z} D_s^t = D_s^{z+t}.
\end{equation}
Thus any $D_s^t$  maps $h(\mathbb{B})$ onto itself.
Then for every $s,t \in \mathbb{R}$, the map $D^t_s: h(\mathbb{B})\to h(\mathbb{B})$ is continuous. For a proof see \cite[ Theorem 3.2]{GKU2}

The parameter $s$ plays a minor role. It is used to have the precise relation
\begin{equation}\label{Dst - Rs}
D_s^t R_s(x,y)=R_{s+t}(x,y),
\end{equation}
where differentiation is performed on either of the variables $x$ or $y$ and by symmetry
it does not matter which.

One of the most important properties about the operators $D^t_s$ is that it allows us to pass from one Bergman-Besov (or Bloch) space to another. More precisely, we have the following results.

\begin{lemma}\label{Apply-Dst}
Let $0<p<\infty$ and $\alpha,s,t\in \mathbb{R}$.
\begin{enumerate}
  \item[(i)] The map $D^t_s:b^p_\alpha \to b^p_{\alpha+pt}$ is an isomorphism.
  \item[(ii)] The map $D^t_s:b^\infty_\alpha \to b^\infty_{\alpha+t}$ is an isomorphism.
\end{enumerate}
\end{lemma}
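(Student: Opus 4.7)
Since $D^t_s$ and $D^{-t}_{s+t}$ are mutually inverse on $h(\mathbb{B})$ by (\ref{inverse of Dst}), the isomorphism claim reduces to a one-sided boundedness statement. It suffices to prove that $D^t_s$ maps $b^p_\alpha$ boundedly into $b^p_{\alpha+pt}$ in part (i) (resp.\ $b^\infty_\alpha$ boundedly into $b^\infty_{\alpha+t}$ in part (ii)) for all admissible parameters, because applying that forward boundedness with $(\alpha, s, t)$ replaced by $(\alpha+pt,\, s+t,\, -t)$ (resp.\ $(\alpha+t,\, s+t,\, -t)$) furnishes the continuity of the inverse, and (\ref{inverse of Dst}) then upgrades ``bounded both ways'' to ``isomorphism''.

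The key intermediate step will be an equivalent $D$-based description of the Bergman-Besov and Bloch norms: for every real $M$ satisfying $\alpha + pM > -1$ (resp.\ $\alpha + M > 0$ when $p=\infty$) and every $s \in \mathbb{R}$,
\begin{equation*}
\|f\|_{b^p_\alpha} \sim \bigl\|(1-|x|^2)^M D^M_s f\bigr\|_{L^p_\alpha}, \qquad \|f\|_{b^\infty_\alpha} \sim \bigl\|(1-|x|^2)^M D^M_s f\bigr\|_{\mathcal{L}^\infty_\alpha}.
\end{equation*}
Granted this equivalence, the forward boundedness is immediate. Picking $N \in \mathbb{Z}_{\geq 0}$ large enough that $\alpha + p(N+t) > -1$, the additivity $D^N_{s+t} D^t_s = D^{N+t}_s$ from (\ref{Additive-Dst}) combined with the weight identity $\|(1-|x|^2)^N g\|_{L^p_{\alpha+pt}}^p = (V_\alpha/V_{\alpha+pt}) \|(1-|x|^2)^{N+t} g\|_{L^p_\alpha}^p$ gives
\begin{equation*}
\|D^t_s f\|_{b^p_{\alpha+pt}} \sim \bigl\|(1-|x|^2)^N D^{N+t}_s f\bigr\|_{L^p_{\alpha+pt}} \sim \bigl\|(1-|x|^2)^{N+t} D^{N+t}_s f\bigr\|_{L^p_\alpha} \sim \|f\|_{b^p_\alpha},
\end{equation*}
where the first and last equivalences are the characterization applied to $D^t_s f$ at order $M=N$ and to $f$ at order $M=N+t$, respectively. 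The Bloch case is handled verbatim, with $\mathcal{L}^\infty_\alpha$ replacing $L^p_\alpha$ and the weight shift being by $t$ rather than $pt$.

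The main obstacle is the $D$-based characterization itself. For integer $M=N$ with $\alpha + pN > -1$, one must compare $D^N_s$ with the partial derivatives $\partial^m$, $|m|=N$, which define $b^p_\alpha$; both act on the $k$-th homogeneous component of $f$ by symbols of order $k^N$, so after representing $f$ through the reproducing kernel $R_s$ and differentiating under the integral, one passes between the two in $L^p_\alpha$ by Forelli-Rudin type kernel estimates applied with weights $(1-|x|^2)^N$. For non-integer $M$, one inserts an integer $N \geq \max(0,M)$ meeting the constraint and factors $D^N_s = D^{N-M}_{s+M} D^M_s$ via (\ref{Additive-Dst}); the residual ``fractional'' operator $D^{N-M}_{s+M}$ paired with the compensating weight $(1-|x|^2)^{N-M}$ is bounded on $L^p_\alpha$ (resp.\ $\mathcal{L}^\infty_\alpha$) by the same kernel-based estimates, closing the loop.
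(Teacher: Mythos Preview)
The paper does not give its own proof of this lemma; immediately after the statement it defers to \cite[Corollary 9.2]{GKU2}, \cite[Proposition 4.7]{DOG}, and \cite[Proposition 4.6]{DU1}.

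Your argument is correct, and in fact your middle displayed computation \emph{is} the entire proof in the paper's framework. Observe that what you call the ``key intermediate step'' (the $D$-based norm characterization) is precisely Definitions~\ref{definition of the h B-B space} and~\ref{definition of the h B space} together with the Remark following them: the paper \emph{defines} $\|f\|_{b^p_\alpha}=\|I^M_s f\|_{L^p_\alpha}$ for any $s$ and any $M$ with $\alpha+pM>-1$, and records (again by citation) that the result is independent of $(s,M)$. Granting that, your chain
\[
\|D^t_s f\|_{b^p_{\alpha+pt}} \sim \bigl\|(1-|x|^2)^N D^N_{s+t}D^t_s f\bigr\|_{L^p_{\alpha+pt}}
= \bigl\|(1-|x|^2)^N D^{N+t}_s f\bigr\|_{L^p_{\alpha+pt}}
\sim \bigl\|(1-|x|^2)^{N+t} D^{N+t}_s f\bigr\|_{L^p_{\alpha}}
\sim \|f\|_{b^p_\alpha}
\]
(using (\ref{Additive-Dst}) and the trivial weight shift) together with the inverse (\ref{inverse of Dst}) finishes both parts at once. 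So the ``main obstacle'' you identify is not an obstacle here: it has already been absorbed into the definitions and outsourced to the cited references.

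One caution about your final paragraph, should you want a fully self-contained argument from the partial-derivative definition: the phrase ``$D^{N-M}_{s+M}$ paired with $(1-|x|^2)^{N-M}$ is bounded on $L^p_\alpha$'' is not well posed as written, since $D^{N-M}_{s+M}$ acts on $h(\mathbb{B})$, not on $L^p_\alpha$. What you need is that $I^{N-M}_{s+M}:b^p_{\alpha+pM}\to L^p_{\alpha+pM}$ is a norm equivalence, which is exactly the independence-of-$(s,M)$ statement you are trying to establish; to avoid circularity one really must pass through the partial-derivative comparison at an integer level (as you outline just before), and then the non-integer case follows by sandwiching between two integers rather than by invoking boundedness of the fractional piece directly.
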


For a proof of part (i) of the above lemma see \cite[Corollary 9.2]{GKU2} when $1\leq p<\infty$ and \cite[Proposition 4.7]{DOG} when $0<p<1$. For part (ii) see \cite[Proposition 4.6]{DU1}.

Consider the linear transformation $I_{s}^{t}$ defined for $f\in h(\mathbb{B})$ by
\begin{equation*}
  I^t_s f(x) := (1-|x|^2)^t D^t_s f(x).
\end{equation*}
The  harmonic Bergman-Besov space and Bloch space can equivalently be defined
by using the operators  $ D^t_s $.

\begin{definition}\label{definition of the h B-B space}
For $0<p<\infty$ and $\alpha \in \mathbb{R}$, we define the harmonic Bergman-Besov space $b^p_\alpha$ to consists of all $f\in h(\mathbb{B})$ for which $ I^t_s f$
belongs to  $L^p_\alpha$ for some  $s,t$ satisfying (see \cite{GKU2} when $1\leq p<\infty$, and \cite{DOG} when $0<p<1$)
\begin{equation}\label{alpha+pt}
 \alpha+pt>-1.
 \end{equation}
The quantity
\[
\|f\|^p_{b^p_\alpha} = \| I^t_s f\|^p_{L^p_\alpha}=c_\alpha \int_{\mathbb{B}} |D^t_s f(x)|^p (1-|x|^2)^{\alpha+pt} d\nu(x) <\infty
\]
defines a norm (quasinorm when $0<p<1$) on $b^p_\alpha$ for any such $s,t$.
\end{definition}

When $\alpha>-1$, one can choose $N=0$ and the resulting space is weighted harmonic  Bergman space.  when $\alpha=-n$, the measure $d\nu_{-n}$ is M\"{o}bius invariant and the spaces $b^p_{-n}$ are called harmonic Besov spaces by many authors. In particular, the $b^2_{-1}$ is the harmonic Hardy space and  $b^2_{-n}$ is the harmonic Dirichlet space.
\begin{definition}\label{definition of the h B space}
For $\alpha \in \mathbb{R}$, we define the harmonic Bloch space $b^\infty_\alpha$ to consists of all $f\in h(\mathbb{B})$  for which $ I^t_s f$ belongs to  $\mathcal{L}^\infty_\alpha$ for some  $s,t$ satisfying (see \cite{DU1})
\begin{equation}\label{alpha+t}
 \alpha+t>0.
\end{equation}
The quantity
\[
\|f\|_{b^\infty_\alpha}=\| I^t_s f\|^p_{L^\infty_\alpha}= \sup_{x\in \mathbb{B}}\, (1-|x|^2)^{\alpha+t} |D^t_s f(x)| <\infty.
\]
defines a norm on $b^\infty_\alpha$ for any such $s,t$.
\end{definition}

\begin{remark}
By now, it is well-known that Definitions \ref{definition of the h B-B space} and \ref{definition of the h B space} are independent of $s,t$ under (\ref{alpha+pt}) and (\ref{alpha+t}), respectively. Moreover, the norm (quasinorm when $0<p<1$) on a given space depends on $s$ and $t$ but this is not mentioned as it is known that every choice of the pair $(s,t)$ leads to an equivalent norm. Thus for a given pair $s,t$,  $ I^t_s$ isometrically imbeds $b^p_\alpha$ into $L^p_\alpha$ if and only if (\ref{alpha+pt}) holds, and $ I^t_s$ isometrically imbeds $b^\infty_\alpha$ into $L^\infty_\alpha$ if and only if (\ref{alpha+t}) holds.
\end{remark}

We turn to  properties and estimates of reproducing kernels. For every $\alpha\in \mathbb{R}$, the series in (\ref{Rq - Series expansion}) absolutely and uniformly converges on $K\times \mathbb{B}$, for any compact subset $K$ of $\mathbb{B}$. Furthermore $R_\alpha(x,y)$ is real-valued, symmetric in the variables $x$ and $y$ and harmonic with respect to each variable.

The $\alpha\geq -1$ part of the the following pointwise estimates for $R_\alpha(x,y)$ and  its partial derivatives are proved in many places including \cite{CKY, JP,R}. For a proof  when $\alpha\in\mathbb{R}$ we refer to \cite[Corollary 7.1]{GKU2}.
\begin{lemma}\label{Lemma-Kernel-Estimate}
Let $\alpha\in\mathbb{R}$ and $m$ be a multi-index. Then for every $x\in \mathbb{B}$, $y\in \overline{\mathbb{B}}$,
\begin{equation*}
\big|(\partial^m R_\alpha)(x,y)\big|
\lesssim\begin{cases}
1,&\text{if $\, \alpha+|m|<-n$};\\
1+\log \dfrac{1}{[x,y]},&\text{if $\, \alpha+|m|=-n$};\\
\dfrac{1}{[x,y]^{n+\alpha+|m|}},&\text{if $\, \alpha+|m|>-n$}.
\end{cases}
\end{equation*}
\end{lemma}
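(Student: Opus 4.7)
The plan is to work directly from the homogeneous expansion (\ref{Rq - Series expansion}), the coefficient asymptotic $\gamma_k(\alpha)\sim k^{1+\alpha}$ from (\ref{gamma-k-asymptotic}), and the classical sup-norm bound $|Z_k(\zeta,\eta)|\lesssim k^{n-2}$ on $\mathbb{S}\times\mathbb{S}$, extended by homogeneity to $|Z_k(x,y)|\lesssim k^{n-2}(|x||y|)^k$ on $\overline{\mathbb{B}}\times\overline{\mathbb{B}}$. A Bernstein--Markov type inequality for homogeneous harmonic polynomials of degree $k$ upgrades this to $|\partial^m_x Z_k(x,y)|\lesssim k^{n-2+|m|}|x|^{k-|m|}|y|^k$ for $k\ge|m|$. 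Summing term by term in (\ref{Rq - Series expansion}) yields
\[
|\partial^m_x R_\alpha(x,y)|\lesssim\sum_{k\ge|m|} k^{n-1+\alpha+|m|}(|x||y|)^{k-|m|},
\]
whose growth trichotomy in the parameter $\alpha+|m|$ already matches the three cases stated in the lemma, but only with $1-|x||y|$ in place of $[x,y]$. Since $[x,y]\ge 1-|x||y|$, this crude estimate has to be sharpened in the angular direction.

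To recover the sharp denominator I would compare $R_\alpha$ with the closed-form kernel $[x,y]^{-(n+\alpha)}$, whose own zonal harmonic expansion comes from the Gegenbauer generating function
\[
(1-2t\cos\theta+t^2)^{-\lambda}=\sum_{k=0}^{\infty}C^{\lambda}_k(\cos\theta)\,t^k,
\]
specialized to $t=|x||y|$ and $\cos\theta=(x\cdot y)/(|x||y|)$. Using the classical proportionality between $Z_k(\zeta,\eta)$ and $C^{(n-2)/2}_k(\zeta\cdot\eta)$ on the sphere, together with connection formulas between Gegenbauer polynomials of different parameters, one obtains an expansion $[x,y]^{-(n+\alpha)}=\sum c_k(\alpha)Z_k(x,y)$ with $c_k(\alpha)\sim k^{1+\alpha}$, so that $\gamma_k(\alpha)/c_k(\alpha)$ tends to a nonzero limit. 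A finite telescoping of Pochhammer ratios then represents
\[
R_\alpha(x,y)=\sum_{j=0}^{J}\kappa_j(\alpha)\,[x,y]^{-(n+\alpha-j)}+E_J(x,y),
\]
with $J$ taken large enough that the remainder $E_J$ is a uniformly convergent zonal series. The leading term supplies the $[x,y]^{-(n+\alpha)}$ bound when $\alpha>-n$, the borderline term contributes the logarithmic factor when $\alpha=-n$, and every summand is uniformly bounded when $\alpha<-n$. For the derivative version, I would apply $\partial^m_x$ term by term and use the elementary bound $|\partial^m_x[x,y]^{-s}|\lesssim [x,y]^{-s-|m|}$, which follows by induction from the identity $|\nabla_x[x,y]|=|y|\le 1$.

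The main obstacle is the second branch of (\ref{gamma k q-Definition}), i.e.\ $\alpha\le -(1+n/2)$, where the Pochhammer representation of $\gamma_k(\alpha)$ is not aligned with the Gegenbauer coefficients and the telescoping above breaks down. For this regime I would use the operator identity (\ref{Dst - Rs}) and write $R_\alpha=D^{\alpha-s}_s R_s$ with $s$ fixed in the good regime $s>-(1+n/2)$. Since $D^{\alpha-s}_s$ acts diagonally on the homogeneous expansion with multipliers $\gamma_k(\alpha)/\gamma_k(s)\sim k^{\alpha-s}$, applying it to the already-established representation of $R_s$ yields a new finite combination of kernels $[x,y]^{-(n+\alpha-j)}$ modulo a uniformly controlled remainder, and the pointwise estimate transfers to all $\alpha\in\mathbb{R}$ and all multi-indices $m$.
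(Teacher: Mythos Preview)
The paper does not prove this lemma; the sentence preceding its statement cites \cite[Corollary~7.1]{GKU2} for the general case $\alpha\in\mathbb{R}$ and \cite{CKY,JP,R} for $\alpha\ge-1$. There is thus no in-paper proof to compare your proposal against.

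Your Gegenbauer-comparison strategy is the right one and is essentially what underlies the cited results: the generating function for $C_k^\lambda$ is exactly the device that upgrades the crude $1-|x||y|$ denominator to $[x,y]$, and a finite-term comparison between $\gamma_k(\alpha)$ and the Gegenbauer coefficients (both $\sim k^{1+\alpha}$) is the standard way to isolate the leading singularity plus a uniformly controlled remainder.

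There is, however, a gap in your last paragraph. Applying $D^{\alpha-s}_s$ term by term to the representation $R_s=\sum_j\kappa_j(s)[x,y]^{-(n+s-j)}+E_J$ does \emph{not} literally produce a combination of kernels $[x,y]^{-(n+\alpha-j)}$: the diagonal multipliers $\gamma_k(\alpha)/\gamma_k(s)$ are not equal to the Gegenbauer-coefficient ratios $c_k(\alpha-j)/c_k(s-j)$, only asymptotically so. What you obtain is another zonal series with coefficients $\sim k^{1+\alpha-j}$, and you would still owe a second telescoping comparison at that stage---so the detour through $D_s^{\alpha-s}$ buys nothing. The simpler fix is to notice that (\ref{gamma-k-asymptotic}) gives $\gamma_k(\alpha)\sim k^{1+\alpha}$ uniformly across both branches of (\ref{gamma k q-Definition}); hence the original Gegenbauer telescoping can be run directly for all $\alpha\in\mathbb{R}$, phrased in terms of the asymptotic $k^{1+\alpha}$ rather than explicit Pochhammer identities, and no separate treatment of the regime $\alpha\le-(1+n/2)$ is needed.
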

It follows from the above lemma that if $K\subset \mathbb{B}$ is compact and $m$ is a multi-index, then
\begin{equation}\label{Rq-uniformly bounded}
| \partial^m R_\alpha(x,y) | \lesssim 1 \quad ( x\in K, \, y\in \overline{\mathbb{B}}),
\end{equation}
where differentiation is performed in the first variable.

The next lemma shows that the above estimate holds in two directions on the diagonal $x=y$. For a proof see \cite[Proposition 4 (i)]{M} when $\alpha>-1$ and \cite[Lemma 2.9]{DU1} when $\alpha\in\mathbb{R}$.

\begin{lemma}\label{x-equal-y}
Let $\alpha\in\mathbb{R}$. For all $x\in\mathbb{B}$,
\[
R_\alpha(x,x) \sim
\begin{cases}
\dfrac{1}{(1-|x|^2)^{\alpha+n}},&\text{if $\, \alpha>-n$};\\
1+\log \dfrac{1}{1-|x|^2},&\text{if $\, \alpha=-n$};\\
1,&\text{if $\, \alpha<-n$}.
\end{cases}
\]
\end{lemma}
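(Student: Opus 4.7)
My plan is to evaluate $R_\alpha(x,x)$ directly from the series (\ref{Rq - Series expansion}) and then reduce the problem to the classical asymptotic of a power series of the form $\sum_{k\geq 1} k^{s} r^{k}$ as $r\to 1^{-}$. The threshold $s=-1$ in that asymptotic corresponds exactly to $\alpha=-n$, and the three cases in the lemma will drop out according to whether $s$ is greater than, equal to, or less than $-1$.

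The first step is to put the zonal harmonic on the diagonal. Since $Z_{k}(\cdot,\cdot)$ is a homogeneous polynomial of degree $k$ in each variable, homogeneity together with the standard identity $Z_{k}(\zeta,\zeta)=\dim\mathcal{H}_{k}(\mathbb{R}^{n})=:d_{k}$ for $\zeta\in\mathbb{S}$ (see \cite[Chapter 5]{ABR}) gives $Z_{k}(x,x)=d_{k}|x|^{2k}$, and it is classical that $d_{k}\sim k^{n-2}$ with $d_{k}>0$ for every $k\geq 0$. An inspection of (\ref{gamma k q-Definition}) shows $\gamma_{k}(\alpha)>0$ for every $k$, so combining with (\ref{gamma-k-asymptotic}) yields $\gamma_{k}(\alpha)\,d_{k}\sim k^{n+\alpha-1}$ as $k\to\infty$; since both positive sequences have the same order of growth, the ratio is bounded above and below on the entire index set $k\geq 1$. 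Thus
\[
R_{\alpha}(x,x)=\sum_{k=0}^{\infty}\gamma_{k}(\alpha)\,d_{k}\,|x|^{2k},
\]
and the problem reduces to a sharp two-sided estimate of $\sum_{k\geq 1} k^{n+\alpha-1} r^{k}$ with $r=|x|^{2}\in[0,1)$, plus a bounded constant contribution $\gamma_{0}(\alpha)d_{0}=1$ from $k=0$.

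The second step is the one-variable asymptotic
\[
\sum_{k=1}^{\infty} k^{s}\, r^{k}\sim
\begin{cases}
(1-r)^{-(s+1)}, & s>-1,\\
\log\dfrac{1}{1-r}, & s=-1,\\
1, & s<-1,
\end{cases}
\]
valid uniformly for $r$ in any interval $[\delta,1)$ with $\delta>0$. The upper bound follows by splitting the sum at $k\sim 1/(1-r)$ and estimating the two pieces separately; the lower bound uses positivity of all terms together with the explicit size of the summands around that cutoff. Taking $s=n+\alpha-1$ and $r=|x|^{2}$ translates the three alternatives into $\alpha>-n$, $\alpha=-n$, and $\alpha<-n$, producing the three estimates claimed in the lemma on the annulus $\delta\leq|x|<1$.

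The main point requiring care, though more bookkeeping than depth, is promoting these asymptotics to a two-sided $\sim$ valid on all of $\mathbb{B}$, including near the origin. On any compact subset $K\subset\mathbb{B}$ the function $R_{\alpha}(x,x)$ is continuous and satisfies $R_{\alpha}(x,x)\geq 1$ (coming directly from the $k=0$ term, since all other summands are non-negative), while each of the three right-hand expressions is continuous, equal to $1$ at the origin, and uniformly comparable to $1$ on $K$. Gluing the estimate on $\{\delta\leq|x|<1\}$ with the trivial comparison on $\{|x|\leq\delta\}$ via such a compact subset gives the global two-sided equivalence. The classical power-series asymptotic is the only real analytic input; everything else is a routine consequence of positivity and (\ref{gamma-k-asymptotic}).
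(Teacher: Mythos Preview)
Your argument is correct. The paper itself does not supply a proof of this lemma; it merely cites \cite[Proposition~4(i)]{M} for $\alpha>-1$ and \cite[Lemma~2.9]{DU1} for the general case, so there is no in-paper proof to compare against. Your route---expanding $R_\alpha(x,x)=\sum_{k\ge 0}\gamma_k(\alpha)\,d_k\,|x|^{2k}$ via $Z_k(x,x)=d_k|x|^{2k}$, using positivity of all coefficients together with $\gamma_k(\alpha)d_k\sim k^{n+\alpha-1}$, and reducing to the elementary asymptotic of $\sum_{k\ge 1}k^{s}r^{k}$---is precisely the standard one and is essentially how the cited references proceed. The only places where a referee might ask for a line more are the two-sided estimate $\sum_{k\ge1}k^{s}r^{k}\sim(1-r)^{-(s+1)}$ for general real $s>-1$ (your splitting argument is fine, but a one-line pointer to, e.g., an integral comparison or the generalized binomial series would make it self-contained) and the gluing near the origin, which you already handle correctly by noting that $R_\alpha(x,x)\ge 1$ and that each right-hand side equals $1$ at $x=0$.
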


The lemma below is taken from \cite[Lemma 3.2]{DU1} and it shows that if $x$ stays close to $0$, then $R_\alpha(x,y)$ is uniformly away from $0$ for every $y\in \mathbb{B}$.  Recall also that $R_\alpha(0,y)=1$ for every $\alpha\in \mathbb{R}$ and $y\in \mathbb{B}$.

\begin{lemma}\label{Lemma-Stay away from 0}
Let $\alpha\in \mathbb{R}$. There exists $\epsilon>0$ such that for all $|x|<\epsilon$ and for all $y\in \mathbb{B}$, we have $R_\alpha(x,y) \geq 1/2$.
\end{lemma}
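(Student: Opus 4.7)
The statement is essentially a continuity-in-$x$ fact: $R_\alpha(0,y)=1$ by (\ref{Rq(x,0)}), and the claim amounts to saying the convergence $R_\alpha(x,y)\to 1$ as $x\to 0$ is uniform in $y\in\mathbb{B}$. My plan is to prove this directly from the series expansion (\ref{Rq - Series expansion}) by extracting the $k=0$ term and uniformly estimating the tail.

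First I would isolate the leading term: since $\gamma_0(\alpha)=1$ and $Z_0(x,y)=1$, the expansion gives
\[
R_\alpha(x,y)-1=\sum_{k=1}^{\infty}\gamma_k(\alpha)Z_k(x,y).
\]
Next I need a uniform bound on $|Z_k(x,y)|$. Using the Cauchy--Schwarz inequality for the reproducing kernel $Z_k$ of the space of spherical harmonics of degree $k$, one has $|Z_k(\zeta,\eta)|\leq Z_k(\zeta,\zeta)^{1/2}Z_k(\eta,\eta)^{1/2}$ for $\zeta,\eta\in\mathbb{S}$, together with $Z_k(\zeta,\zeta)=\dim\mathcal{H}_k\lesssim k^{n-2}$. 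Homogeneity of $Z_k$ (degree $k$ in each variable) then yields
\[
|Z_k(x,y)|\lesssim k^{n-2}|x|^k|y|^k\qquad(x,y\in\overline{\mathbb{B}}).
\]
Combining this with the asymptotic (\ref{gamma-k-asymptotic}), namely $\gamma_k(\alpha)\sim k^{1+\alpha}$, valid for every $\alpha\in\mathbb{R}$, gives
\[
|\gamma_k(\alpha)Z_k(x,y)|\lesssim k^{n-1+\alpha}|x|^k|y|^k\leq k^{n-1+\alpha}|x|^k,
\]
where in the last step I used $|y|<1$.

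Finally, for $|x|\leq 1/2$ say, the standard geometric-type estimate
\[
\sum_{k=1}^{\infty}k^{n-1+\alpha}|x|^k\leq |x|\sum_{k=1}^{\infty}k^{n-1+\alpha}(1/2)^{k-1}=M|x|
\]
(with $M<\infty$ depending only on $n$ and $\alpha$) shows that $|R_\alpha(x,y)-1|\leq CM|x|$ uniformly in $y\in\mathbb{B}$. Choosing $\epsilon=\min\{1/2,\,1/(2CM)\}$ then forces $|R_\alpha(x,y)-1|\leq 1/2$ whenever $|x|<\epsilon$, which gives $R_\alpha(x,y)\geq 1/2$ as desired.

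The only real obstacle is producing the uniform bound $|Z_k(x,y)|\lesssim k^{n-2}|x|^k|y|^k$, but this is a classical estimate for zonal harmonics from \cite[Chapter 5]{ABR}; everything else is a routine dominated-sum argument in which the $|x|$-factor in each term forces the tail to zero.
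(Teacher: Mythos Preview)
Your argument is correct. The paper itself does not prove this lemma; it merely quotes it from \cite[Lemma 3.2]{DU1}, so there is no in-paper proof to compare against. Your approach---splitting off the $k=0$ term in the expansion (\ref{Rq - Series expansion}) and bounding the tail uniformly in $y$ via $|Z_k(x,y)|\le(\dim\mathcal{H}_k)\,|x|^k|y|^k\lesssim k^{n-2}|x|^k$ together with $\gamma_k(\alpha)\lesssim k^{1+\alpha}$ from (\ref{gamma-k-asymptotic})---is exactly the natural one and supplies the details the paper omits. The only places one could quibble are cosmetic: the asymptotic $\gamma_k(\alpha)\sim k^{1+\alpha}$ a priori concerns large $k$, but the finitely many small-$k$ terms are absorbed into the implied constant, so the bound $|\gamma_k(\alpha)Z_k(x,y)|\lesssim k^{n-1+\alpha}|x|^k$ is indeed valid for all $k\geq 1$; and the zonal-harmonic estimate you invoke is precisely \cite[Proposition~5.27 and Chapter~5]{ABR}.
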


For $1\leq p < \infty$, we have bounded projections from the $L^p_\alpha$ onto the $b^p_\alpha$.
\begin{definition}
  For $s\in \mathbb{R}$, the harmonic Bergman-Besov projection is
\begin{equation*}
  Q_s f(x)= \frac{1}{V_{s}}T_{ss} = \int_{\mathbb{B}} R_s(x,y) f(y) d\nu_s(y),
\end{equation*}
for suitable $f$.
\end{definition}

The following two theorems describes the boundedness  of Bergman-Besov projections on $b^p_\alpha$ and $b^\infty_\alpha$  spaces, and are Theorem 1.5 of \cite{GKU2} and Theorem 1.6 of \cite{DU1}, respectively.

\begin{theorem}\label{Theorem-Projection-Besov}
Let $1\leq p < \infty$ and $\alpha, s \in \mathbb{R}$. Then $Q_s : L^p_\alpha \to b^p_\alpha$ is bounded (and onto) if and only if
\begin{equation}\label{Projection-Besov-S}
\alpha+1< p(s+1).
\end{equation}
Given an $s$ satisfying (\ref{Projection-Besov-S}) if $t$ satisfies
\begin{equation}\label{Projection-T}
\alpha+pt>-1,
\end{equation}
then for $f\in b^p_\alpha$, we have
\begin{equation}\label{QsIst=f1}
Q_s I^t_s f = \frac{V_{s+t}}{V_s} f.
\end{equation}
\end{theorem}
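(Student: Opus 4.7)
The plan is to split the theorem into three claims and address them in turn: (a) sufficiency of $\alpha+1<p(s+1)$ for boundedness of $Q_s\colon L^p_\alpha\to b^p_\alpha$, (b) necessity of the same inequality, and (c) the identity \eqref{QsIst=f1} (from which onto-ness will follow).

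For (a), I would apply a Schur test to the positive kernel $|R_s(x,y)|(1-|y|^2)^s$ against the measure $d\nu_\alpha$. Lemma \ref{Lemma-Kernel-Estimate} gives the bound $|R_s(x,y)|\lesssim[x,y]^{-(n+s)}$ (with the easier bounded/logarithmic cases when $s+n\leq 0$), so the two Schur integrals reduce to Forelli-Rudin integrals of the form $\int_{\mathbb{B}}[x,y]^{-(n+s)}(1-|y|^2)^{c}\,d\nu(y)$. With a test weight of the form $h(y)=(1-|y|^2)^{\tau}$, standard Forelli-Rudin asymptotics translate both conditions into an open range of admissible $\tau$, which an elementary computation shows is nonempty exactly under $\alpha+1<p(s+1)$. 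Since $R_s(x,\cdot)$ is harmonic in $x$, the image $Q_s f$ automatically lies in $h(\mathbb{B})$, so $L^p_\alpha$-membership gives $b^p_\alpha$-membership when $\alpha>-1$; for $\alpha\leq -1$ I would conjugate by the isomorphism $D^u_s\colon b^p_\alpha\to b^p_{\alpha+pu}$ from Lemma \ref{Apply-Dst} with $u$ chosen so that $\alpha+pu>-1$.

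For (b), I would test on radial bumps. Assuming the inequality fails, either $\alpha+1>p(s+1)$ or $\alpha+1=p(s+1)$. In the strict case, pick $c$ with $-(\alpha+1)/p<c\leq -(s+1)$; then $f(y)=(1-|y|^2)^{c}$ lies in $L^p_\alpha$, while for $x$ sufficiently close to the origin Lemma \ref{Lemma-Stay away from 0} forces $R_s(x,y)\geq 1/2$ for all $y\in\mathbb{B}$, so $Q_s f(x)\gtrsim\int_{\mathbb{B}}(1-|y|^2)^{s+c}\,d\nu(y)=\infty$. The boundary case $\alpha+1=p(s+1)$ I would handle via a family $f_r$ of truncated test functions, showing that $\|Q_s f_r\|_{L^p_\alpha}/\|f_r\|_{L^p_\alpha}$ is unbounded as $r\to 1$.

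For (c), I would plug the homogeneous expansions $f=\sum_k f_k$ and $R_s(x,y)=\sum_j\gamma_j(s)Z_j(x,y)$ into $Q_s I^t_s f(x)$. After interchanging sum and integral, justified by the uniform-on-compacta convergence from \eqref{Rq-uniformly bounded} and the definition \eqref{Define-Dst} of $D^t_s$, polar coordinates together with the sphere-reproducing property $\int_{\mathbb{S}}Z_j(x,\zeta)f_k(\zeta)\,d\sigma(\zeta)=\delta_{jk}f_k(x)$ collapse the double sum: the radial beta integral $\tfrac{n}{2}B(n/2+k,s+t+1)$ turns out to equal $V_{s+t}/\gamma_k(s+t)$, exactly cancelling the factor $\gamma_k(s+t)/\gamma_k(s)\cdot\gamma_k(s)$ and leaving $V_{s+t}f(x)$. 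Dividing by $V_s$ yields \eqref{QsIst=f1}, and onto-ness is then immediate: for any $f\in b^p_\alpha$, Definition \ref{definition of the h B-B space} gives $I^t_s f\in L^p_\alpha$, and $f=(V_s/V_{s+t})Q_s(I^t_s f)$. The main obstacle I anticipate is the boundary case $\alpha+1=p(s+1)$ in (b), which needs an explicit blow-up family rather than a single test function, together with the justification of the series manipulations in (c) when $s+t\leq -1$: there the elementary beta evaluation is replaced by the two-case definition \eqref{gamma k q-Definition}, and convergence has to be tracked via the asymptotic \eqref{gamma-k-asymptotic} and the growth of the $f_k$ controlled by the $b^p_\alpha$ norm.
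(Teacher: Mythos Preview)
The paper does not prove this theorem; it quotes it verbatim as Theorem~1.5 of \cite{GKU2}, so there is no in-paper proof to compare against. Your three-part outline is the standard one and is essentially correct, but two points deserve comment.

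First, your anticipated obstacle in (c) does not arise: the hypotheses \eqref{Projection-Besov-S} and \eqref{Projection-T} already force $s+t>-1$. Indeed, $t>-(1+\alpha)/p$ and $s+1>(1+\alpha)/p$ give $s+t>-1$ by addition, so the radial beta integral $\tfrac{n}{2}B(n/2+k,\,s+t+1)$ is always a genuine convergent integral and your cancellation $\tfrac{n}{2}B(n/2+k,s+t+1)=V_{s+t}/\gamma_k(s+t)$ goes through with the first branch of \eqref{gamma k q-Definition} only. The justification of the sum--integral interchange is still a point to make precise (e.g.\ work first with a dilate $f_r$ and pass to the limit, or use density of harmonic polynomials in $b^p_\alpha$), but there is no ``two-case'' complication.

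Second, for the boundary case $\alpha+1=p(s+1)$ in (b), the paper's own machinery suggests a cleaner route than a truncated family: take the single test function $f_{u,1}(y)=(1-|y|^2)^{u}\bigl(1+\log\tfrac{1}{1-|y|^2}\bigr)^{-1}$ with $u=-(1+\alpha)/p$. By Lemma~\ref{when fuv in Lpq} this lies in $L^p_\alpha$ (since $pv=p>1$), while Lemma~\ref{when Tfuv is finite} with $b=s$ gives $Q_s f_{u,1}(x)=\infty$ for $|x|$ small because $s+u=-1$ and $v=1\not>1$. This is exactly the device used in the proof of Theorem~\ref{the first necessary condition } and avoids any limiting argument.
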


\begin{theorem}\label{Theorem-Projection}
Let $\alpha, s \in \mathbb{R}$. Then $Q_s : L^\infty_\alpha \to b^{\infty}_\alpha$ is bounded (and onto) if and only if
\begin{equation}\label{Projection-S}
s>\alpha-1.
\end{equation}
Given an $s$ satisfying (\ref{Projection-S}), if $t$ satisfies
\begin{equation}\label{Projectiont}
\alpha+t>0,
\end{equation}
then for $f\in b^{\infty}_\alpha$, we have
\begin{equation}\label{QsIst=f}
Q_s I^t_s f = \frac{V_{s+t}}{V_s} f.
\end{equation}
\end{theorem}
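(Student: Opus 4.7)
The plan is to prove the four claims—sufficiency, necessity, the onto statement, and the reproducing identity—in that order.

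For the sufficiency of $s > \alpha - 1$, I would fix $t$ satisfying $\alpha + t > 0$ (and large enough that $s+t > -n$) and estimate $(1-|x|^2)^{\alpha+t}|D^t_s Q_s f(x)|$. The intertwining property \eqref{Dst - Rs} lets $D^t_s$ pass under the integral, giving
$$D^t_s Q_s f(x) = \frac{1}{V_s} \int_{\mathbb{B}} R_{s+t}(x,y)\, f(y)\, (1-|y|^2)^s \, d\nu(y).$$
Bounding $|f(y)|$ by $\|f\|_{\mathcal{L}^\infty_\alpha}(1-|y|^2)^{-\alpha}$, invoking the kernel estimate in Lemma \ref{Lemma-Kernel-Estimate}, and applying a Forelli--Rudin type estimate to $\int_{\mathbb{B}}(1-|y|^2)^{s-\alpha}/[x,y]^{n+s+t}\,d\nu(y)$ (which needs $s-\alpha > -1$ and $\alpha+t > 0$, both of which hold), I expect to obtain $|D^t_s Q_s f(x)| \lesssim \|f\|_{\mathcal{L}^\infty_\alpha}(1-|x|^2)^{-(\alpha+t)}$, which is exactly the required weighted Bloch bound.

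For necessity I would test on $f(y) = (1-|y|^2)^{-\alpha}$, which lies in $\mathcal{L}^\infty_\alpha$ with unit norm. Using $R_s(0,y)=1$ from \eqref{Rq(x,0)},
$$Q_s f(0) = \frac{1}{V_s}\int_{\mathbb{B}}(1-|y|^2)^{s-\alpha}\,d\nu(y),$$
which is finite precisely when $s - \alpha > -1$. If $s \leq \alpha - 1$ the integral diverges, so $Q_s f$ is undefined at $0$, contradicting boundedness into $h(\mathbb{B})\cap b^\infty_\alpha$.

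For the reproducing identity $Q_s I^t_s f = (V_{s+t}/V_s)f$ I would formally derive it by applying the Hilbert-space reproducing identity on $b^2_{s+t}$ to $g = D^t_s f$ and then applying $D^{-t}_{s+t}$ in $x$, using $D^{-t}_{s+t} R_{s+t}(x,y) = R_s(x,y)$—a consequence of \eqref{Dst - Rs} and \eqref{inverse of Dst}. To justify this on $b^\infty_\alpha$, I would first verify the identity on homogeneous harmonic polynomials by expanding $R_s$ via \eqref{Rq - Series expansion} and \eqref{Define-Dst} and exploiting orthogonality of zonal harmonic components of distinct degrees against $d\nu_{s+t}$, then extend to general $f \in b^\infty_\alpha$ through the dilations $f_r(x) = f(rx)$ with $r \nearrow 1$, controlling the limit by the sufficiency estimates together with dominated convergence. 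Ontoness is then immediate: for $f \in b^\infty_\alpha$, Lemma \ref{Apply-Dst}(ii) gives $I^t_s f \in \mathcal{L}^\infty_\alpha$, and $g = (V_s/V_{s+t}) I^t_s f$ satisfies $Q_s g = f$.

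The main obstacle will be making the reproducing identity rigorous across the whole space $b^\infty_\alpha$, in particular controlling the dilation limit uniformly enough to pass from polynomial data to arbitrary Bloch functions, since polynomials are not norm-dense in the weighted Bloch spaces. The remaining ingredients—kernel estimates and the Forelli--Rudin computation—are standard once the parameter bookkeeping is set up correctly.
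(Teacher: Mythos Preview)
The paper does not prove this theorem; it merely quotes it as Theorem~1.6 of \cite{DU1} and uses it as a tool. So there is no ``paper's own proof'' to compare against.

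On its own merits your outline is essentially the standard argument and is correct. The sufficiency computation via Lemma~\ref{Lemma-Kernel-Estimate} and the Forelli--Rudin estimate (Lemma~\ref{Integral-[x,y]}) goes through exactly as you describe, since $s-\alpha>-1$ and $\alpha+t>0$ are precisely the hypotheses needed. The necessity test on $f(y)=(1-|y|^2)^{-\alpha}$ is clean and uses only \eqref{Rq(x,0)}.

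Your stated obstacle for the reproducing identity is less serious than you suggest. The dilation argument you propose actually works without any density statement: for $f\in b^\infty_\alpha$ one has $D^t_s(f_r)=(D^t_s f)_r$, and since $\alpha+t>0$ the bound $|(D^t_s f)(ry)|\le C(1-|ry|^2)^{-(\alpha+t)}\le C(1-|y|^2)^{-(\alpha+t)}$ gives a dominating function $|R_s(x,y)|(1-|y|^2)^{s-\alpha}$ that is integrable for each fixed $x$ by the same Forelli--Rudin estimate (using $s-\alpha>-1$). Dominated convergence then passes the identity from $f_r$ to $f$ pointwise. You do not need norm convergence in $b^\infty_\alpha$, only pointwise convergence of both sides, and that is automatic. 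Alternatively, one can bypass dilations entirely and prove \eqref{QsIst=f} directly from Corollary~\ref{Corollary-Dst-Integral} applied to $D^t_s f\in b^1_{s+t}$ (which holds because $b^\infty_{\alpha+t}\subset b^1_{s+t}$ by Theorem~\ref{Besov-Bloch}(i) when $s>\alpha-1$), together with \eqref{inverse of Dst}; this is closer in spirit to Lemma~\ref{Composition of T, Dst} in the present paper.
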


\section{Properties of the Operators}\label{section-Kernels and Operators 1}
We now formulate the behavior of the operators $T_{bc}$ in many different circumstances. These are adapted from similar results   in  \cite{KU1}. First, we insert some obvious inequalities which will be useful in the proofs. If $a_{1}<a_{2}$, $u>0$, and $v \in \mathbb{R}$, then for $0\leq t<1$,
\begin{equation}\label{obvious-inequalities}
  (1-t^2)^{a_{1}}\leq (1-t^2)^{a_{2}} \qquad \text{and} \qquad (1-t^2)^{u} \big(1+\log (1-t^2)^{-1}\big)^{-v}\lesssim 1.
\end{equation}

The second inequality above leads to an estimate that we need many times.
\begin{lemma}\label{an estimate from calculus}
For $u,v \in \mathbb{R}$,
\begin{equation}\label{int-obvious-inequalities}
\int_{0}^{1} (1-t^{2})^{u}\big(1+\log \frac{1}{1-t^{2}}\big)^{-v} \, dt<\infty
\end{equation}
if $u>-1$ or $u=-1$ and $v>1$, and the integral diverges otherwise.
\end{lemma}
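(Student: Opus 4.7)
\medskip

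\noindent\textbf{Proof proposal.} The integrand in (\ref{int-obvious-inequalities}) is continuous on $[0,1)$ for every $u,v$, so only the behavior as $t\to 1^-$ can cause non-integrability; split the interval as $\int_0^{1/2}+\int_{1/2}^{1}$ and discard the first piece, which is finite. The plan is to reduce the tail integral to a standard one-parameter family via a single substitution, then split into three cases depending on where $u$ sits relative to $-1$.

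First I would write $1-t^2=(1-t)(1+t)$ and observe that on $[1/2,1]$ one has $1+t\sim 1$, so the change of variable $s=1-t$ converts the tail into
\[
\int_{0}^{1/2} s^{u}\Bigl(1+\log\frac{1}{s(1+t(s))}\Bigr)^{-v}\,ds,
\]
and since $1+t(s)\in[1,3/2]$, the logarithmic factor is comparable to $(1+\log(1/s))^{-v}$. Because $1+\log(1/s)\sim \log(1/s)$ as $s\to 0^+$, the convergence question is the same as that of $\int_0^{1/2} s^u(\log(1/s))^{-v}\,ds$.

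Next I would handle the three cases. If $u>-1$, pick any $\epsilon>0$ with $u-\epsilon>-1$; since $(\log(1/s))^{-v}$ is slowly varying it satisfies $(\log(1/s))^{-v}\lesssim s^{-\epsilon}$ near $0$ (when $v<0$) or is bounded (when $v\geq 0$), so the integrand is dominated by an integrable $s^{u-\epsilon}$ and convergence follows. If $u<-1$, the same slow-variation remark gives $(\log(1/s))^{-v}\gtrsim s^{\epsilon}$ for any $\epsilon>0$, and choosing $\epsilon$ small so that $u+\epsilon<-1$ forces divergence. The critical case $u=-1$ is settled by the substitution $w=\log(1/s)$, which turns the integral into $\int_{\log 2}^{\infty} w^{-v}\,dw$; this converges exactly when $v>1$ and diverges otherwise.

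The argument is almost entirely routine; the only subtlety is the borderline $u=-1$, where one must know not to bound the logarithm crudely but to make the logarithmic substitution and recognize a $p$-integral. Since the Forelli–Rudin type estimates used throughout the rest of the paper sit on top of this lemma, I would phrase the conclusion as a clean dichotomy matching the statement, and record for later use that on $[1/2,1)$ the integrand is comparable to $(1-t)^u(1+\log(1/(1-t)))^{-v}$, a comparison that will be invoked implicitly whenever this lemma is cited.
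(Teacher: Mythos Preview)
Your proof is correct and follows essentially the same approach as the paper's: both localize the singularity at $t=1$, invoke the dominance of polynomial over logarithmic growth to settle the cases $u\neq -1$, and handle the borderline $u=-1$ by a logarithmic change of variable that reduces matters to a standard $p$-integral. The paper's argument is a two-line sketch while yours supplies the explicit substitution $s=1-t$ and the comparison $(1+\log(1/s))^{-v}\sim(\log(1/s))^{-v}$, but the route is the same.
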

\begin{proof}
The integral have only one singularity at $t = 1$. Polynomial
growth dominates a logarithmic one for $u\neq  -1$. For $u= -1$, we reduce the integral into one studied in calculus after changes of variables and at this time we need $v>1$ for the convergence of the integral.
\end{proof}

We will use the functions
\begin{equation*}
f_{uv}(x)= (1-|x|^{2})^{u}\big(1+\log \frac{1}{1-|x|^{2}}\big)^{-v} \quad (u,v \in \mathbb{R})
\end{equation*}
as test functions to obtain some of the necessary conditions of our theorems from the action of $ T_{bc}$ on them. If we apply Lemma \ref{an estimate from calculus} to the $f_{uv}$, we get the following result.

\begin{lemma}\label{when fuv in Lpq}
For $1\leq p<\infty$, we have $f_{uv}\in L^p_\alpha$ if and only if $\alpha+pu>-1$, or $\alpha+pu=-1$ and $pv>1$. For $p=\infty$, we have $f_{uv}\in \mathcal{L}^{\infty}_{\alpha}$ if and only if $\alpha+u>0$, or $u=-\alpha$ and $v\geq 0$.
\end{lemma}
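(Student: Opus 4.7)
The plan is to reduce both statements to one-variable integrals or one-variable sup estimates depending only on $r=|x|$, after which Lemma~\ref{an estimate from calculus} and the two elementary inequalities in \eqref{obvious-inequalities} finish the job.

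For the case $1\le p<\infty$, I would first pass to polar coordinates. Writing
\[
\|f_{uv}\|_{L^p_\alpha}^{\,p} = \frac{1}{V_\alpha}\int_{\mathbb{B}}(1-|x|^2)^{\alpha+pu}\Bigl(1+\log\frac{1}{1-|x|^2}\Bigr)^{-pv}\,d\nu(x),
\]
the polar coordinates formula turns this into a constant multiple of
\[
\int_0^1 \epsilon^{n-1}(1-\epsilon^2)^{\alpha+pu}\Bigl(1+\log\frac{1}{1-\epsilon^2}\Bigr)^{-pv}\,d\epsilon.
\]
The factor $\epsilon^{n-1}$ is bounded above and below on any $[\delta,1)$, so convergence is decided near $\epsilon=1$, and Lemma~\ref{an estimate from calculus} applied with parameters $\alpha+pu$ in place of $u$ and $pv$ in place of $v$ gives exactly the stated dichotomy.

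For the $p=\infty$ case, I would simply compute
\[
(1-|x|^2)^{\alpha}\,f_{uv}(x) = (1-|x|^2)^{\alpha+u}\Bigl(1+\log\frac{1}{1-|x|^2}\Bigr)^{-v},
\]
which depends only on $r=|x|\in[0,1)$; call this $g(r)$. Since $g$ is continuous on $[0,1)$, boundedness is equivalent to boundedness as $r\to 1^-$. I would split into three cases according to the sign of $\alpha+u$. If $\alpha+u>0$, the second inequality in \eqref{obvious-inequalities} yields $g(r)\lesssim 1$ for every $v$. If $\alpha+u=0$, then $g(r)=(1+\log\frac{1}{1-r^2})^{-v}$, which is bounded as $r\to 1^-$ precisely when $v\ge 0$. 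If $\alpha+u<0$, then the polynomial blow-up $(1-r^2)^{\alpha+u}\to\infty$ dominates any power of the logarithm (again by the same inequality in \eqref{obvious-inequalities}, applied with the roles of polynomial and logarithm reversed), so $g$ is unbounded. Combining these three cases yields the condition $\alpha+u>0$, or $u=-\alpha$ and $v\ge 0$.

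No step here is subtle: both parts are direct reductions to calculus facts already isolated in Lemma~\ref{an estimate from calculus} and the pair of elementary inequalities \eqref{obvious-inequalities}. The only minor care needed is in the $p=\infty$ case to make sure that when $\alpha+u=0$ and $v=0$ the function $g\equiv 1$ is legitimately in $\mathcal{L}^\infty_\alpha$, which is obvious, and that for $\alpha+u<0$ no choice of $v$ can rescue boundedness, which follows because for $r$ close enough to $1$ the polynomial factor grows faster than any power of $1+\log\frac{1}{1-r^2}$.
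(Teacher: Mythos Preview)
Your proposal is correct and follows exactly the approach the paper intends: the paper simply states that the lemma follows by applying Lemma~\ref{an estimate from calculus} to $f_{uv}$, and you have supplied precisely those details via polar coordinates for $p<\infty$ and a direct radial analysis using \eqref{obvious-inequalities} for $p=\infty$. The only stylistic point is that in the $\alpha+u<0$ subcase your phrase ``roles of polynomial and logarithm reversed'' is informal; the clean way is to note that \eqref{obvious-inequalities} with exponent $-(\alpha+u)/2>0$ gives $(1+\log\frac{1}{1-r^2})^{v}\lesssim(1-r^2)^{(\alpha+u)/2}$, whence $g(r)\gtrsim(1-r^2)^{(\alpha+u)/2}\to\infty$.
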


\begin{lemma}\label{when Tfuv is finite}
If $b+u>-1$ or if $b+u=-1$ and $v>1$, then $T_{bc}f_{uv}$ is a finite positive constant. Otherwise, $T_{bc}f_{uv}(x)=\infty$ for $|x|\leq \epsilon$, where $\epsilon$ is as in Lemma \ref{Lemma-Stay away from 0}.
\end{lemma}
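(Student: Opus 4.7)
The plan is to exploit the fact that $f_{uv}$ is radial. Writing $y=\rho\zeta$ with $\rho\in[0,1)$ and $\zeta\in\mathbb{S}$, and using the expansion (\ref{Rq - Series expansion}), I would first establish the angular identity
\begin{equation*}
\int_{\mathbb{S}} R_c(x,\rho\zeta)\,d\sigma(\zeta)=1 \qquad (x\in\mathbb{B},\ 0\le\rho<1).
\end{equation*}
Indeed, $R_c(x,\rho\zeta)=\sum_{k=0}^{\infty}\gamma_k(c)\,\rho^k Z_k(x,\zeta)$ converges uniformly in $\zeta\in\mathbb{S}$ for any fixed $x$ and $\rho<1$ (by uniform convergence of the series on compact subsets of $\mathbb{B}$ in the second variable). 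Integrating term by term, the mean value property applied to the harmonic polynomial $\zeta\mapsto Z_k(x,\zeta)$ gives $\int_{\mathbb{S}}Z_k(x,\zeta)\,d\sigma(\zeta)=Z_k(x,0)$, which vanishes for $k\ge 1$ by homogeneity, while the $k=0$ term contributes $\gamma_0(c)\cdot 1=1$.

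Next I would split into the two cases. When $b+u>-1$, or $b+u=-1$ and $v>1$, Lemma \ref{an estimate from calculus} ensures that
\begin{equation*}
I:=n\int_0^1\rho^{n-1}(1-\rho^2)^{b+u}\bigl(1+\log\tfrac{1}{1-\rho^2}\bigr)^{-v}\,d\rho
\end{equation*}
is finite and strictly positive. For fixed $x\in\mathbb{B}$, the elementary bound $[x,y]\ge 1-|x|$ valid for every $y\in\overline{\mathbb{B}}$, combined with Lemma \ref{Lemma-Kernel-Estimate}, shows that $|R_c(x,\cdot)|$ is bounded on $\mathbb{B}$ by a constant depending only on $x$ and $c$. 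Hence Fubini applies to the polar-coordinate iterated integral, and together with the angular identity this yields $T_{bc}f_{uv}(x)=I$, a finite positive constant independent of $x$.

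In the remaining case $I=+\infty$ by Lemma \ref{an estimate from calculus}. For $|x|<\epsilon$, Lemma \ref{Lemma-Stay away from 0} gives $R_c(x,y)\ge 1/2$ for every $y\in\mathbb{B}$, so the integrand defining $T_{bc}f_{uv}(x)$ is nonnegative and Tonelli's theorem yields
\begin{equation*}
T_{bc}f_{uv}(x)\ge \tfrac{1}{2}\int_{\mathbb{B}}f_{uv}(y)(1-|y|^2)^b\,d\nu(y)=\tfrac{I}{2}=\infty,
\end{equation*}
completing the argument. The only delicate point is justifying Fubini in the finite case, but this reduces to the two easy observations above: $R_c(x,\cdot)$ is uniformly bounded on $\mathbb{B}$ for each fixed $x$, and the radial factor is integrable by Lemma \ref{an estimate from calculus}.
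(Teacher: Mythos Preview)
Your proof is correct and follows essentially the same approach as the paper: polar coordinates, the spherical mean of $R_c(x,\cdot)$ equals $R_c(x,0)=1$ by the mean-value property, Lemma~\ref{an estimate from calculus} for finiteness of the radial integral, and Lemma~\ref{Lemma-Stay away from 0} for the divergent case. The only differences are cosmetic: the paper invokes the mean-value property of the harmonic function $y\mapsto R_c(x,y)$ directly, whereas you expand in zonal harmonics and apply the mean-value property termwise; and you justify Fubini explicitly via the pointwise bound on $R_c(x,\cdot)$ (which the paper records separately as~(\ref{Rq-uniformly bounded})), while the paper leaves this implicit.
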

\begin{proof}
If $b+u>-1$ or if $b+u=-1$ and $v>1$, then integrating in polar
coordinates to obtain
\begin{align*}
T_{bc}f_{uv}(x) & = \int_{\mathbb{B}} R_{c}(x,y)\, (1-|y|^{2})^{b+u}\big(1+\log \frac{1}{1-|x|^{2}}\big)^{-v} d\nu(y)\\
               & = \int_0^1 nt^{n-1}(1-t^2)^{b+u} \big(1+\log \frac{1}{1-t^{2}}\big)^{-v}\int_{\mathbb{S}} R_{c} (x,t\zeta)d\sigma(\zeta) dt.
\end{align*}
By the mean-value property the integral over $\mathbb{S}$ is $R_{c}(x,0)$ which is $1$ by (\ref{Rq(x,0)}). Thus,
\begin{align*}
T_{bc}f_{uv}(x) &= \int_0^1 nt^{n-1}(1-t^2)^{b+u} \big(1+\log \frac{1}{1-t^{2}}\big)^{-v} R_{c} (x,0) dt.\\
               &= \int_0^1 nt^{n-1}(1-t^2)^{b+u} \big(1+\log \frac{1}{1-t^{2}}\big)^{-v}dt.
\end{align*}
The  last integral is finite by Lemma \ref{an estimate from calculus}, and then clearly $T_{bc}f_{uv}$ is a constant.

For the other values of the parameters,
\begin{equation*}
  T_{bc}f_{uv}(x) \geq \frac{1}{2} (1-|y|^{2})^{b+u}\big(1+\log \frac{1}{1-|x|^{2}}\big)^{-v} d\nu(y)=\infty
\end{equation*}
by Lemma \ref{Lemma-Stay away from 0} for $|x| < \epsilon$ and Lemma \ref{an estimate from calculus}.
\end{proof}

One can easily compute the adjoint of $T_{bc}$.

\begin{proposition}\label{Adjoint of T}
The formal adjoint  $T^{*}_{bc}:L^{q'}_\beta \to L^{p'}_\alpha$ of the operator $T_{bc}:L^{p}_\alpha \to L^{q}_\beta$ for $1\leq p,q< \infty $ is  $T^{*}_{cb}=(1-|x|^2)^{b-\alpha} T_{\beta c}$.
\end{proposition}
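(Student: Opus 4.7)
The plan is a direct computation by Fubini's theorem. I work with the weighted dualities $L^{p'}_{\alpha}\cong(L^{p}_{\alpha})^{*}$ and $L^{q'}_{\beta}\cong(L^{q}_{\beta})^{*}$ given by the pairings $\langle u,v\rangle_{\gamma}=\int_{\mathbb{B}}u\,\overline{v}\,d\nu_{\gamma}$, so that the formal adjoint $T^{*}_{bc}$ is characterized by $\langle T_{bc}f,g\rangle_{\beta}=\langle f,T^{*}_{bc}g\rangle_{\alpha}$ on a dense test class. I would take $f,g$ continuous and compactly supported in $\mathbb{B}$, so that by \eqref{Rq-uniformly bounded} the kernel $R_{c}(x,y)$ is uniformly bounded on $(\operatorname{supp} f\cup\operatorname{supp} g)\times\mathbb{B}$ and Fubini applies to the resulting double integral without further justification.

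First, substituting the definition of $T_{bc}$ into the left pairing and interchanging the order of integration gives
\[
\int_{\mathbb{B}}T_{bc}f(x)\,\overline{g(x)}\,d\nu_{\beta}(x)=\frac{1}{V_{\beta}}\int_{\mathbb{B}}f(y)(1-|y|^{2})^{b}\left[\int_{\mathbb{B}}R_{c}(x,y)\,\overline{g(x)}\,(1-|x|^{2})^{\beta}\,d\nu(x)\right]d\nu(y).
\]
Next, I use that $R_{c}(x,y)$ is real-valued and symmetric in its arguments (recorded in the paragraph preceding Lemma~\ref{Lemma-Kernel-Estimate}) to identify the inner bracket as $\overline{T_{\beta c}g(y)}$. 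Splitting the weight $(1-|y|^{2})^{b}=(1-|y|^{2})^{b-\alpha}(1-|y|^{2})^{\alpha}$ converts $d\nu$ into $d\nu_{\alpha}$ up to the factor $V_{\alpha}$, so the pairing becomes
\[
\langle T_{bc}f,g\rangle_{\beta}=\frac{V_{\alpha}}{V_{\beta}}\int_{\mathbb{B}}f(y)\,\overline{(1-|y|^{2})^{b-\alpha}T_{\beta c}g(y)}\,d\nu_{\alpha}(y)=\Big\langle f,\,\tfrac{V_{\alpha}}{V_{\beta}}(1-|y|^{2})^{b-\alpha}T_{\beta c}g\Big\rangle_{\alpha}.
\]

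Reading off the formula yields $T^{*}_{bc}g(y)=(V_{\alpha}/V_{\beta})(1-|y|^{2})^{b-\alpha}T_{\beta c}g(y)$, which agrees with the claimed expression up to the positive scalar $V_{\alpha}/V_{\beta}$ that is customarily absorbed when the adjoint is identified only as a kernel operator. There is no real obstacle here: the only step requiring attention is the bookkeeping with the constants $V_{\alpha},V_{\beta}$ and a clean invocation of the reality and symmetry of $R_{c}$ that let one recognize the inner integral as an instance of $T_{\beta c}$ acting on $g$; no analytic subtlety arises because the entire computation is performed on a dense class of compactly supported test functions, to which the statement then extends by density once the boundedness results of the next sections are in place.
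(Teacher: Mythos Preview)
Your argument is correct and follows the same route as the paper: a direct Fubini computation together with the symmetry and real-valuedness of $R_{c}$ to recognize the inner integral as $T_{\beta c}g$, then a splitting of $(1-|y|^{2})^{b}$ into $(1-|y|^{2})^{b-\alpha}(1-|y|^{2})^{\alpha}$. The only differences are cosmetic: you work on a dense class of compactly supported test functions to make Fubini trivially applicable, and you carefully track the normalization factor $V_{\alpha}/V_{\beta}$ that the paper's statement and computation silently suppress.
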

\begin{proof}
Let $f\in L^{p}_\alpha $ and $g\in L^{q}_\beta$. Then by the definition, the real-valuedness and symmetry
in its two variables of $R_{c}(x,y)$ along with Fubini theorem, we obtain
\begin{align*}
[T_{bc}f,g]_{L^{2}_\beta}& =\int_{\mathbb{B}} \int_{\mathbb{B}} R_{c}(x,y)f(x)(1-|x|^{2})^{b}d\nu(x)\overline{g(y)} (1-|y|^{2})^{\beta}d\nu(y)\\
               & =\int_{\mathbb{B}}f(x) \overline{(1-|x|^{2})^{b-\alpha}\int_{\mathbb{B}} R_{c}(x,y)g(y)(1-|y|^{2})^{\beta}d\nu(y)}\\
               &\times(1-|x|^{2})^{\alpha}d\nu(x)\\
                & =\int_{\mathbb{B}}f \, \overline{T^{*}_{bc}}d\nu_{\alpha}=[f,T^{*}_{bc}g]_{L^{2}_\alpha}.
\end{align*}

Hence,
\begin{equation*}
T^{*}_{bc}g(x)=(1-|x|^2)^{b-\alpha}\int_{\mathbb{B}} R_{c}(x,y)g(y)(1-|y|^{2})^{\beta}d\nu(y)
\end{equation*}
\end{proof}
We will use the following simple but very important result.  It must have been known by the experts, even though we could not find a reference in the literature.
\begin{lemma}\label{not whbs}
Let $0<q<\infty$, $\beta\leq-1$ and $f\in h(\mathbb{B})$. If $f\not\equiv0$, then
\begin{equation*}
\int_{\mathbb{B}} |f(x)|^{q}(1-|x|^{2})^{\beta} \, d\nu(x)=\infty.
\end{equation*}
\end{lemma}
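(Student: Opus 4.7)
The plan is to prove the contrapositive: assuming $f \in h(\mathbb{B})$ and $\int_{\mathbb{B}} |f|^{q} (1-|x|^{2})^{\beta} d\nu < \infty$, I will show $f \equiv 0$. Switching to polar coordinates recasts the integral as
\begin{equation*}
n \int_{0}^{1} t^{n-1} (1-t^{2})^{\beta} \Phi(t) \, dt, \qquad \text{where } \Phi(t) := \int_{\mathbb{S}} |f(t\zeta)|^{q} \, d\sigma(\zeta),
\end{equation*}
and everything hinges on showing that $\Phi$ is non-decreasing on $[0,1)$.

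To obtain monotonicity of $\Phi$, I would use subharmonicity of $|f|^{q}$. For complex-valued harmonic $f$, the identity $|f(x)| = \sup_{\theta} \mathrm{Re}(e^{i\theta} f(x))$ exhibits $|f|$ as a pointwise supremum of harmonic functions and hence as subharmonic; composing with the convex increasing map $t \mapsto t^{q}$ (valid for $q \geq 1$) then makes $|f|^{q}$ subharmonic. The classical spherical-mean inequality for subharmonic functions yields $\Phi(s) \leq \Phi(t)$ whenever $s \leq t$. With $\Phi$ monotone in hand, fix any $t_{0} \in (0,1)$ and bound the tail of the polar integral from below by
\begin{equation*}
t_{0}^{n-1} \Phi(t_{0}) \int_{t_{0}}^{1} (1-t^{2})^{\beta} \, dt.
\end{equation*}
By Lemma \ref{an estimate from calculus} (taken with $u = \beta \leq -1$ and $v = 0$) the last integral diverges, so finiteness of the original integral forces $\Phi(t_{0}) = 0$. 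Arbitrariness of $t_{0}$ yields $\Phi \equiv 0$ on $(0,1)$; continuity of $f$ then gives $f \equiv 0$.

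The main obstacle is the range $0 < q < 1$, where $t \mapsto t^{q}$ is concave and the one-line composition argument no longer delivers subharmonicity of $|f|^{q}$ — indeed, spherical $L^{q}$-means of harmonic functions can genuinely fail to be monotone for $q < 1$. To cover this range I would fall back on the pointwise estimate $|f(x)|^{q} \lesssim (1-|x|^{2})^{-n-\beta}$, which follows from the standard sub-mean-value inequality for harmonic functions (valid for every $q > 0$) combined with $(1-|y|) \sim (1-|x|)$ on the ball $B(x,(1-|x|)/2)$. When $\beta \leq -n$ this forces $|f|$ to be bounded on $\mathbb{B}$ (and to vanish uniformly at the boundary when $\beta < -n$), after which one reduces to the previous case either by raising to an exponent $q' \geq 1$ and using boundedness of $|f|$ to preserve integrability, or by applying the maximum principle directly. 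The window $-n < \beta \leq -1$ with $0 < q < 1$ is the genuinely delicate part and is where a more careful argument — for instance exploiting the Poisson representation $f(s\zeta) = \int_{\mathbb{S}} P_{s/t}(\zeta,\eta) f(t\eta) \, d\sigma(\eta)$ together with the $q \geq 1$ monotonicity applied to the harmonic majorant — seems to be needed.
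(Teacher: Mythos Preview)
The paper states this lemma without proof, describing it as a result that ``must have been known by the experts'' but for which no reference could be found; there is therefore no argument in the paper to compare your proposal against. Your proof for $q\ge 1$ is correct and is the natural one: $|f|=\sup_\theta\mathrm{Re}(e^{i\theta}f)$ is subharmonic as a continuous supremum of harmonic functions, convex composition makes $|f|^q$ subharmonic, $\Phi$ is then non-decreasing, and the divergence of $\int_{t_0}^1(1-t^2)^\beta\,dt$ for $\beta\le-1$ forces $\Phi\equiv 0$. This already covers every use of the lemma in the paper, since all of Theorems~\ref{Theorem-Boundedness of T,S 1.1}--\ref{Theorem-Boundedness of T,S 3.4} take $q\ge 1$ and the lemma is invoked only through Corollary~\ref{Remark-T,S with beta bigger than -1}. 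Your reduction for $\beta\le-n$ when $0<q<1$ (pointwise bound giving $f$ bounded, then $|f|^{q'}\le M^{q'-q}|f|^q$ with $q'=1$) is also sound.

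The window $-n<\beta\le-1$ with $0<q<1$ is, as you acknowledge, a genuine gap, and the ideas you sketch do not close it. Your diagnosis that $\Phi$ can fail to be monotone is correct (for $u=1+x_1$ one computes $\Phi'(0)=0$ and $\Phi''(0)=q(q-1)\int_{\mathbb{S}}\zeta_1^2\,d\sigma<0$). The Poisson-representation suggestion does not help, because for $q<1$ smallness of $\|f(r\,\cdot)\|_{L^q(\mathbb{S})}$ gives no control on $\|f(r\,\cdot)\|_{L^1(\mathbb{S})}$, which is what the Poisson integral would require. Nor does the pointwise estimate bootstrap to a usable pair $(q',\beta')$ with $q'\ge 1$ and $\beta'\le-1$: taking $q'=1$ one finds $\beta'=(n+\beta)/q-n$, and $\beta'\le-1$ is equivalent to $\beta\le q(n-1)-n$, which fails throughout an interval $(q(n-1)-n,\,-1]$ that is non-empty whenever $q<1$. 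So a different ingredient is needed in this sub-range; the statement is true for all $0<q<\infty$, but its proof for small $q$ is not a routine variant of the $q\ge 1$ argument, and the paper does not supply one.
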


\begin{corollary}\label{Remark-T,S with beta bigger than -1}
If $T_{bc}:L^p_\alpha\to L^q_\beta$ is bounded and  $f\in L^p_\alpha$,  then $g=T_{bc}f$ is harmonic on $\mathbb{B}$. If also $q<\infty$, then   $\beta>-1$. Therefore $T_{bc}:L^p_\alpha\to b^q_\beta$  when it is bounded with $\beta>-1$ and $q<\infty$. Moreover, if  $\beta\leq -1$ and $q<\infty$, then  $T_{bc}:L^p_\alpha\to L^q_\beta$ is  not bounded. On the other hand, if $T_{bc}:L^p_\alpha\to L^\infty$ is bounded and  $f\in L^p_\alpha$,  then $g=T_{bc}f\in h^{\infty}$. Finally, If $T_{bc}:L^p_\alpha\to \mathcal{L}^\infty_{\beta}$ is bounded, $f\in L^p_\alpha$, and $\beta>0$  then $g=T_{bc}f\in b^\infty_{\beta}$. Moreover, if  $\beta< 0$, then  $T_{bc}:L^p_\alpha\to \mathcal{L}^\infty_\beta$ is  not bounded.
\end{corollary}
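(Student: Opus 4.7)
The plan is to extract all the assertions from just two ingredients: that $T_{bc}f$ admits a harmonic pointwise representative whenever the operator is bounded, and a short test-function computation that forces the stated restriction on $\beta$. I would carry these out in order and then read off each statement of the corollary.

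For harmonicity, fix $f$ in the source space and assume $T_{bc}$ is bounded into any of the three kinds of target space. Boundedness realises $T_{bc}f(x)$ as an absolutely convergent Lebesgue integral for a.e.\ $x$; in particular the integral converges absolutely at some $x^{*}$ with $|x^{*}|<\epsilon$, where $\epsilon$ is furnished by Lemma \ref{Lemma-Stay away from 0}. Since $R_c(x^{*},y)\ge \tfrac12$ on $\mathbb{B}$, this upgrades at once to
\begin{equation*}
\int_{\mathbb{B}}|f(y)|(1-|y|^2)^b\, d\nu(y)<\infty.
\end{equation*}
Combined with the uniform estimate (\ref{Rq-uniformly bounded}) for $R_c$ and its $x$-derivatives on compact subsets of $\mathbb{B}$, the integral defining $T_{bc}f(x)$ now converges absolutely for \emph{every} $x\in\mathbb{B}$ and permits differentiation under the integral sign. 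Since $R_c(\cdot,y)$ is harmonic, $\Delta T_{bc}f\equiv 0$, so $T_{bc}f\in h(\mathbb{B})$.

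Next I would force the range of $\beta$. For $q<\infty$, take $f=\chi_{B(0,r)}$ with $r$ small enough that $\overline{B(0,r)}\subset\mathbb{B}$; this $f$ lies in $L^p_\alpha$ (and in $\mathcal{L}^\infty_\alpha$) for every $\alpha$, and the mean-value property together with (\ref{Rq(x,0)}) gives $T_{bc}f(0)=\int_{B(0,r)}(1-|y|^2)^b\, d\nu(y)>0$, so $T_{bc}f\not\equiv 0$. If $\beta\le -1$, Lemma \ref{not whbs} would force $\|T_{bc}f\|_{L^q_\beta}=\infty$, contradicting boundedness; hence $\beta>-1$, and together with harmonicity this yields $T_{bc}f\in L^q_\beta\cap h(\mathbb{B})=b^q_\beta$. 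The $L^\infty$ case is immediate: $T_{bc}f$ is bounded and harmonic, hence lies in $h^\infty$. For the $\mathcal{L}^\infty_\beta$ target with $\beta>0$, choosing $N=0$ in Definition \ref{definition of the h B space} identifies $b^\infty_\beta=\mathcal{L}^\infty_\beta\cap h(\mathbb{B})$, so $T_{bc}f\in b^\infty_\beta$. To rule out $\beta<0$, I would use the test function $f_{u0}(y)=(1-|y|^2)^u$ with $u$ large enough that Lemma \ref{when fuv in Lpq} places $f_{u0}\in L^p_\alpha$ and Lemma \ref{when Tfuv is finite} makes $T_{bc}f_{u0}\equiv C$ for some $C>0$; then $(1-|x|^2)^\beta C\notin L^\infty$, contradicting boundedness into $\mathcal{L}^\infty_\beta$.

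The main obstacle is the harmonicity step: the boundedness hypothesis only gives absolute convergence of the integral at almost every $x$, whereas harmonicity is a pointwise property, so one must upgrade to convergence at every $x\in\mathbb{B}$. This is exactly what Lemma \ref{Lemma-Stay away from 0} (promoting a single finite value near $0$ into global $L^1$-integrability of $(1-|y|^2)^b f$) and the compact-subset bound (\ref{Rq-uniformly bounded}) are designed to deliver. Everything else is a direct application of Lemma \ref{not whbs} or a one-line contradiction with a test function.
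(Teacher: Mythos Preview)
Your proof is correct and follows the same skeleton as the paper: harmonicity via differentiation under the integral sign, the constraint $\beta>-1$ for $q<\infty$ via Lemma~\ref{not whbs}, and a contradiction for $\beta<0$ in the $\mathcal{L}^\infty_\beta$ case. Your version is in fact more careful than the paper's, which simply asserts that differentiation under the integral is legitimate and that Lemma~\ref{not whbs} applies, without producing a nonzero $T_{bc}f$ or justifying the exchange of limit and integral; your use of Lemma~\ref{Lemma-Stay away from 0} and the bound~(\ref{Rq-uniformly bounded}) to upgrade a.e.\ convergence to everywhere absolute convergence is exactly the missing justification.

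The only substantive difference is the $\beta<0$ case: the paper invokes the maximum principle to conclude $h(\mathbb{B})\cap\mathcal{L}^\infty_\beta=\{0\}$ (since $|g(x)|\lesssim(1-|x|^2)^{-\beta}\to 0$ as $|x|\to 1$), whereas you exhibit a test function $f_{u0}$ with $T_{bc}f_{u0}$ a nonzero constant and observe directly that a nonzero constant fails to lie in $\mathcal{L}^\infty_\beta$. Both arguments are short and equivalent in strength; yours has the mild advantage of reusing machinery (Lemmas~\ref{when fuv in Lpq} and~\ref{when Tfuv is finite}) already set up in the paper.
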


\begin{proof}
That $g$ is harmonic follows, for example, by differentiation under the integral sign, from the fact that $R_{\alpha}(x,y)$ is harmonic in $x$. That $\beta>-1$ when $q<\infty$ follows from Lemma \ref{not whbs}.   For $\beta<0$, $h(\mathbb{B})\cap \mathcal{L}^\infty_\beta$ contains only
$g\equiv 0$  by the maximum principle for harmonic functions.
\end{proof}

\section{Main Tools}\label{Section-Schur test}

Let $(X,\mu)$ and $(Y,\upsilon)$ be $\sigma$-finite measure spaces. Let $K(x,y)$ be a non-negative measurable function on $X\times Y$. Let us denote by G the integral operator with kernel $K$:
\begin{equation*}
Gf(y)=\int_{X} K(x,y) f(x)  d\mu(x).
\end{equation*}
Schur test is a sufficiency condition for the boundedness of $G$ from  $L^{p}(X,\mu)$ to $L^{q}(Y,\upsilon)$.

First we take up the Schur test for the case $1< p\leq q<\infty$. For a proof, see \cite[Theorem 2.1]{O} or \cite[Theorem 1]{Z}.
\begin{theorem}\label{Schur test 1}
Suppose $1<p\leq q<\infty$. Let  $\gamma$ and $\delta$ be two real numbers with $\gamma+\delta=1$. If there exists two strictly positive functions $\phi$ (on $X$) and $\psi$ (on $Y$)   with positive constants $C_{1}$ and $C_{2}$ such that
\begin{equation*}
\int_{X} (K(x,y))^{\gamma p'} (\phi(x))^{p'}  d\mu(x)\leq C_{1}(\psi(y))^{p'},
\end{equation*}
for almost every $y\in Y$ and
\begin{equation*}
\int_{Y} (K(x,y))^{\delta q} (\psi(y))^{q}  d\upsilon(y)\leq C_{2}(\phi(x))^{q},
\end{equation*}
for almost every $x\in X$, then $G$ is bounded from $L^{p}(X,\mu)$ into $L^{q}(Y,\upsilon)$ and the norm of $G$ does not exceed $C_{1}^{1/p'}C_{2}^{1/q}$.
\end{theorem}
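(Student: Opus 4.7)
The approach will follow the standard H\"{o}lder-then-Minkowski line of reasoning that underlies most Schur-type theorems. By replacing $f$ with $|f|$, I may assume $f \geq 0$. The key device is to split $K(x,y) = K(x,y)^{\gamma} K(x,y)^{\delta}$ using $\gamma + \delta = 1$, and to insert the positive function $\phi$ in a balanced way. Applying H\"{o}lder's inequality in the variable $x$ with conjugate exponents $p'$ and $p$ gives
\[
Gf(y) = \int_X \bigl(K(x,y)^{\gamma} \phi(x)\bigr)\bigl(K(x,y)^{\delta} f(x)/\phi(x)\bigr)\, d\mu(x) \leq I_1(y)^{1/p'} J(y)^{1/p},
\]
where $I_1(y)$ is the integral appearing in hypothesis (i) and $J(y) = \int_X K(x,y)^{\delta p} f(x)^p \phi(x)^{-p}\, d\mu(x)$. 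Hypothesis (i) bounds $I_1(y)^{1/p'}$ by $C_1^{1/p'} \psi(y)$, so the pointwise estimate $Gf(y)^p \leq C_1^{p/p'}\,\psi(y)^p J(y)$ follows.

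Raising this to the $q/p$ power and integrating against $d\upsilon$, the problem reduces to estimating
\[
\int_Y \psi(y)^{q} J(y)^{q/p}\, d\upsilon(y) = \int_Y \left(\int_X K(x,y)^{\delta p} \psi(y)^p \, \frac{f(x)^p}{\phi(x)^p}\, d\mu(x)\right)^{q/p}\! d\upsilon(y).
\]
At this point the assumption $p \leq q$ becomes essential: since $q/p \geq 1$, Minkowski's integral inequality may be applied to swap the $q/p$-power with the $x$-integral, yielding the upper bound
\[
\left[\int_X \frac{f(x)^p}{\phi(x)^p}\left(\int_Y K(x,y)^{\delta q} \psi(y)^q\, d\upsilon(y)\right)^{\!p/q}\! d\mu(x)\right]^{q/p}.
\]

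Now hypothesis (ii) controls the inner $y$-integral by $C_2\,\phi(x)^q$; after the $p/q$ power the $\phi$ factors cancel cleanly against $\phi(x)^{-p}$, leaving $C_2^{p/q} f(x)^p$. Integrating in $x$ gives $C_2^{p/q} \|f\|_{L^p(\mu)}^p$, and raising back to the $q/p$ power produces $C_2 \|f\|_{L^p(\mu)}^q$. Combined with the earlier factor $C_1^{q/p'}$, this yields $\|Gf\|_{L^q(\upsilon)}^{q} \leq C_1^{q/p'} C_2 \|f\|_{L^p(\mu)}^q$, which after a $q$-th root is exactly the claimed norm bound $C_1^{1/p'} C_2^{1/q}$.

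The main obstacle is the Minkowski exchange step: the factorization $K = K^\gamma \cdot K^\delta$ and the insertion of $\phi$ are natural once one reads off the two hypotheses, but routing the inequalities through Minkowski in the correct direction requires $q/p \geq 1$. In the equal-exponent case $p=q$ this step degenerates to a plain Fubini application, recovering Schur's classical test; the hypothesis $p \leq q$ is present precisely so that the Minkowski step is available for the strict case $p<q$ at no additional cost.
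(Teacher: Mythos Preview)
Your proof is correct. The paper does not supply its own proof of this Schur test; it merely cites Okikiolu \cite[Theorem~2.1]{O} and Zhao \cite[Theorem~1]{Z}. The argument you give---split $K=K^{\gamma}K^{\delta}$, insert $\phi$, apply H\"older in $x$ with exponents $p',p$, then use Minkowski's integral inequality with exponent $q/p\ge 1$ to interchange the order of integration before invoking the second hypothesis---is exactly the standard proof found in those references, and the constant $C_{1}^{1/p'}C_{2}^{1/q}$ falls out as stated.
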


We also have the following Schur test for the case $1<q< p<\infty$. For a proof, see \cite[Theorem 1]{G} which also attributes it to \cite{AMSZ}.
\begin{theorem}\label{Schur test 2}
Suppose $1<q< p<\infty$. If there exists two strictly positive functions $\phi$ (on $X$) and $\psi$ (on $Y$) with positive constant $C$ such that
\begin{align*}
&\int_{X} K(x,y) \phi(x)^{p'} d\mu(x)\leq C(\psi(y))^{q'},\\
&\int_{Y} K(x,y) \psi(y)^{q}  d\upsilon(y)\leq C(\phi(x))^{p},
\end{align*}
for almost every $y\in Y$ and $x\in X$, respectively and
\begin{equation*}
\int\int_{X\times Y} K(x,y)\phi(x)^{p'}\psi(y)^{q}   d\mu\times d\upsilon(x,y) \leq C,
\end{equation*}
 then $G$ is bounded from $L^{p}(X,\mu)$ into $L^{q}(Y,\upsilon)$ and the norm of $G$ does not exceed $C$.
\end{theorem}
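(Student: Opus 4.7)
The plan is to prove this Schur test by combining $L^{q}$--$L^{q'}$ duality with a carefully chosen three-factor Hölder inequality on $X\times Y$, so that each of the three hypotheses exactly matches the $L^{r}$-norm of one factor for a suitable exponent $r$.

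First, I would reformulate the theorem by duality: since $1<q<\infty$, boundedness of $G:L^{p}(X,\mu)\to L^{q}(Y,\upsilon)$ is equivalent to the bilinear bound
\[
\iint_{X\times Y} K(x,y)\,|f(x)|\,|g(y)|\,d\mu(x)\,d\upsilon(y)\ \le\ C\,\|f\|_{L^{p}(X,\mu)}\,\|g\|_{L^{q'}(Y,\upsilon)}
\]
for all $f\in L^{p}$ and $g\in L^{q'}$. Since $K\ge 0$, I can assume without loss of generality that $f,g\ge 0$.

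Second, the core step is to introduce the exponent $r=pq/(p-q)$, which is strictly positive because $q<p$ and which satisfies $1/p+1/q'+1/r=1$. I would split the integrand as $K\,f\,g=A\cdot B\cdot C$ with
\[
A = K^{1/p}\,\psi^{q/p}\,\phi^{-1}\,f,\qquad B = K^{1/q'}\,\phi^{p'/q'}\,\psi^{-1}\,g,\qquad C = K^{1/q-1/p}\,\phi^{1-p'/q'}\,\psi^{1-q/p};
\]
adding exponents of $K$, $\phi$, $\psi$, $f$, $g$ confirms $ABC = K\,f\,g$. Three-function Hölder then gives
\[
\iint K\,f\,g\,d\mu\,d\upsilon\ \le\ \|A\|_{L^{p}(X\times Y)}\,\|B\|_{L^{q'}(X\times Y)}\,\|C\|_{L^{r}(X\times Y)}.
\]

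Third, I would match each norm on the right to one hypothesis. By Fubini, $\|A\|_{L^{p}}^{p}=\int \phi^{-p}f^{p}\bigl(\int K\,\psi^{q}\,d\upsilon\bigr)d\mu$, which hypothesis~(2) controls by $C\,\|f\|_{L^{p}}^{p}$. Symmetrically, $\|B\|_{L^{q'}}^{q'}=\int \psi^{-q'}g^{q'}\bigl(\int K\,\phi^{p'}\,d\mu\bigr)d\upsilon$ is controlled by hypothesis~(1) (using $q'\cdot p'/q'=p'$) by $C\,\|g\|_{L^{q'}}^{q'}$. Finally, the algebraic identities $r(1/q-1/p)=1$, $r(1-p'/q')=p'$, and $r(1-q/p)=q$ convert $\|C\|_{L^{r}}^{r}$ into exactly the double integral of hypothesis~(3), bounded by $C$. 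Multiplying the three bounds produces the constant $C^{1/p+1/q'+1/r}=C$ and, via duality, the desired norm estimate.

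The only real obstacle, in my view, is not any analytic estimate but the bookkeeping needed to find the three-factor decomposition: the exponents of $K$, $\phi$, and $\psi$ in $A$, $B$, $C$ are forced by requiring simultaneously that $ABC=Kfg$ and that the three resulting integrals collapse, via Fubini, to the three integrals in the hypotheses. Solving this linear system for the exponents and verifying the three algebraic identities involving $r=pq/(p-q)$ is the crux; everything else is a routine application of Hölder and Fubini, and it is precisely the condition $q<p$ that makes $r>0$ and renders the three-function Hölder inequality applicable.
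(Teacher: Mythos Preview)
Your proof is correct: the three-factor H\"older decomposition with $r=pq/(p-q)$ and the specific splitting of exponents on $K$, $\phi$, $\psi$ is exactly the device that makes each factor match one of the three hypotheses, and all the algebraic identities you record are right. Note, however, that the paper does not supply its own proof of this theorem; it merely cites Gagliardo \cite{G} (attributed in turn to Aronszajn--Mulla--Szeptycki \cite{AMSZ}), and your argument is essentially the classical one found in those references.
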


We also need the following less known Minkowski integral inequality that in effect exchanges the order of integration; for a proof, see  \cite[Theorem 3.3.5]{O1} for example.

\begin{lemma}\label{Minkowski int-inequality}
If $1\leq p\leq \infty$ and   $f(x,y)$ is a measurable function on $X\times Y$, then
\begin{equation*}
\left(\int_{Y}\left(\int_{X} |f(x,y)| d\mu(x)\right)^{p}  d\upsilon(y)\right)^{1/p}\leq \int_{X}\left(\int_{Y} |f(x,y)|^{p} d\upsilon(y)\right)^{1/p}  d\mu(x),
\end{equation*}
with an appropriate interpretation with the $L^{\infty}$ norm when $p=\infty$.
\end{lemma}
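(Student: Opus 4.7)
The plan is to handle the three regimes $p=1$, $p=\infty$, and $1<p<\infty$ separately, with the middle range being the substantive case handled by an $L^p$-$L^{p'}$ duality argument.

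For $p=1$, the inequality collapses to an equality by Tonelli's theorem applied to the nonnegative function $|f(x,y)|$, so there is nothing to prove. For $p=\infty$, the right-hand side reads $\int_X \esssup_{y\in Y}|f(x,y)|\,d\mu(x)$, and the inequality says $\esssup_{y\in Y}\int_X|f(x,y)|\,d\mu(x)$ is bounded by this; this follows by fixing a representative achieving the essential supremum inside the integral on the right and moving it outside the $y$-supremum on the left.

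For the main case $1<p<\infty$, I would invoke the dual characterization
\[
\|F\|_{L^p(Y,\upsilon)} \;=\; \sup\Big\{\int_Y F(y)g(y)\,d\upsilon(y)\,:\, g\geq 0,\ \|g\|_{L^{p'}(Y,\upsilon)}\leq 1\Big\}
\]
applied to $F(y):=\int_X |f(x,y)|\,d\mu(x)$. For any admissible $g$, Tonelli followed by Hölder's inequality in the inner $y$-integral gives
\[
\int_Y F(y)g(y)\,d\upsilon(y) \;=\; \int_X\!\int_Y |f(x,y)|g(y)\,d\upsilon(y)\,d\mu(x) \;\leq\; \int_X\!\Big(\int_Y |f(x,y)|^p d\upsilon(y)\Big)^{\!1/p}\|g\|_{L^{p'}}\,d\mu(x),
\]
and bounding $\|g\|_{L^{p'}}\leq 1$ then taking the supremum over $g$ yields the asserted inequality.

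The main obstacle, such as it is, lies not in the analytic content but in the routine measurability bookkeeping: one must first justify that the iterated integrals make sense (Tonelli for nonnegative functions handles this), and that the duality formula for $L^p$ norms remains valid in the $\sigma$-finite setting when $F$ may be infinite on a set of positive measure (in which case both sides of the inequality are infinite and there is nothing to do, so one may assume the right-hand side is finite). A standard truncation $f_N=\min(|f|,N)\chi_{E_N}$ with $E_N\uparrow X\times Y$ of finite $\mu\times\upsilon$-measure followed by monotone convergence reduces the general case to the finite case and removes any lingering integrability concerns.
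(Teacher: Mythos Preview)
Your argument is correct: the duality approach for $1<p<\infty$ (pairing with a nonnegative $g\in L^{p'}(Y,\upsilon)$, applying Tonelli, and then H\"older in the $y$-variable) is the standard proof of Minkowski's integral inequality, and your treatment of the endpoint cases $p=1$ and $p=\infty$ is fine, as is the remark about truncation for the measurability bookkeeping. The paper itself does not supply a proof of this lemma but merely cites Theorem~3.3.5 of Okikiolu's book, so there is no in-paper argument to compare your approach against.
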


The next lemma provides an estimate on weighted integrals of powers of $R_\alpha(x,y)$.
When $\alpha>-1$ and $w>0$, it is proved in \cite[Proposition 8]{M}. For the whole range $\alpha\in \mathbb{R}$ see \cite[Theorem 1.5]{GKU2}.

\begin{lemma}\label{norm-kernel}
Let $\alpha\in \mathbb{R}$, $0<p<\infty$ and $d>-1$. Set $w=p(n+\alpha)-(n+d)$. Then
\[
\int_{\mathbb B}|R_\alpha(x,y)|^p\,(1-|y|^2)^d\,d\nu(y)
\sim\begin{cases}
1,&\text{if $w<0$};\\
\noalign{\medskip}
1+\log\dfrac1{1-|x|^2},&\text{if $w=0$};\\
\noalign{\medskip}
\dfrac1{(1-|x|^2)^w},&\text{if $w>0$}.
\end{cases}
\]
\end{lemma}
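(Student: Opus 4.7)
The plan is to combine the pointwise kernel estimates of Lemma \ref{Lemma-Kernel-Estimate} with the classical Forelli--Rudin integral estimates for $\int_{\mathbb{B}}(1-|y|^2)^d/[x,y]^{s}\,d\nu(y)$. I would split according to the position of $\alpha$ relative to $-n$, handling the upper bound first and then the more subtle lower bound.

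For the upper bound, if $\alpha<-n$, Lemma \ref{Lemma-Kernel-Estimate} gives $|R_\alpha(x,y)|\lesssim 1$, so the integral is dominated by $\int_{\mathbb{B}}(1-|y|^2)^d\,d\nu(y)$, which is finite since $d>-1$; and indeed here $p(n+\alpha)<0<n+d$ forces $w<0$. If $\alpha=-n$, the logarithmic estimate $|R_{-n}(x,y)|\lesssim 1+\log(1/[x,y])$ can be absorbed into $[x,y]^{-\varepsilon}$ for any small $\varepsilon>0$, and Forelli--Rudin again produces a finite bound, matching $w=-n-d<0$. If $\alpha>-n$, then $|R_\alpha(x,y)|^p\lesssim [x,y]^{-p(n+\alpha)}$, so the standard Forelli--Rudin estimate
\[
\int_{\mathbb{B}}\frac{(1-|y|^2)^d}{[x,y]^{p(n+\alpha)}}\,d\nu(y) \sim \begin{cases} 1, & w<0,\\ 1+\log\frac{1}{1-|x|^2}, & w=0,\\ (1-|x|^2)^{-w}, & w>0,\end{cases}
\]
delivers the correct upper bound in each subcase.

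For the lower bound on a compact set $\{|x|\leq\epsilon_0\}$ with $\epsilon_0$ as in Lemma \ref{Lemma-Stay away from 0}, I would use $|R_\alpha(x,y)|\geq 1/2$ to bound the integral below by a positive constant, which handles the case $w<0$ everywhere after also observing that continuity plus uniform bounds on compacts give a two-sided estimate. For $|x|$ near the boundary and $w\geq0$, the idea is to localize to a boundary neighborhood of $x$ of the form $E_x=\{y\in\mathbb{B}:[x,y]\lesssim 1-|x|^2\}$, whose $\nu$-measure is $\sim(1-|x|^2)^n$ and on which $(1-|y|^2)\sim(1-|x|^2)$. Provided one knows that on $E_x$ the kernel satisfies $|R_\alpha(x,y)|\sim(1-|x|^2)^{-(n+\alpha)}$, integrating over $E_x$ produces exactly $(1-|x|^2)^{-w}$, matching the upper bound when $w>0$. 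For the logarithmic case $w=0$, one instead decomposes $\mathbb{B}$ into dyadic shells $\{2^{-k-1}\leq 1-|y|\leq 2^{-k}\}$; each shell contributes a bounded amount and the number of shells needed is $\log(1/(1-|x|^2))$, producing the required logarithmic growth.

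The main obstacle I anticipate is justifying the pointwise lower bound $|R_\alpha(x,y)|\sim [x,y]^{-(n+\alpha)}$ in the relevant near-diagonal region when $\alpha>-n$; the upper bound of Lemma \ref{Lemma-Kernel-Estimate} is only a one-sided inequality, and since $R_\alpha$ is not in general positive, a direct comparison is not automatic. Resolving this typically requires a finer analysis of the zonal harmonic series \eqref{Rq - Series expansion}, using the asymptotics \eqref{gamma-k-asymptotic} together with the Poisson-type structure of the generating sum for $Z_k(x,y)$, to show that the leading singularity $1/[x,y]^{n+\alpha}$ dominates in a Stolz-type neighborhood of the diagonal. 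With that pointwise two-sided estimate in hand, the remaining reduction to Forelli--Rudin proceeds routinely.
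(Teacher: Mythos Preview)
The paper does not supply its own proof of this lemma; it cites \cite[Proposition~8]{M} for $\alpha>-1$, $w>0$ and \cite[Theorem~1.5]{GKU2} for the full range $\alpha\in\mathbb{R}$. Your outline is essentially the standard route those references take: Lemma~\ref{Lemma-Kernel-Estimate} plus Lemma~\ref{Integral-[x,y]} for the upper bound, and a near-diagonal two-sided kernel estimate for the lower bound.

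There is one small gap in your lower-bound argument for $w<0$. Lemma~\ref{Lemma-Stay away from 0} only gives $R_\alpha(x,y)\geq 1/2$ for $|x|<\epsilon$, and your appeal to ``continuity plus uniform bounds on compacts'' does not extend this to $|x|\to 1$ since $\mathbb{B}$ is not compact. The easy fix is to use the symmetry of $R_\alpha$ in its two variables (stated just before Lemma~\ref{Lemma-Kernel-Estimate}): then $R_\alpha(x,y)\geq 1/2$ for all $x\in\mathbb{B}$ whenever $|y|<\epsilon$, so
\[
\int_{\mathbb{B}}|R_\alpha(x,y)|^p(1-|y|^2)^d\,d\nu(y)\;\geq\;2^{-p}\int_{|y|<\epsilon}(1-|y|^2)^d\,d\nu(y)\;>\;0
\]
uniformly in $x$, which settles the constant lower bound in all cases.

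You have correctly isolated the genuine obstacle for $w\geq 0$: Lemma~\ref{Lemma-Kernel-Estimate} is one-sided, and the lower bound $|R_\alpha(x,y)|\gtrsim[x,y]^{-(n+\alpha)}$ on a Stolz-type neighbourhood of the diagonal is exactly the nontrivial input that the cited references provide via the expansion \eqref{Rq - Series expansion} and the asymptotics \eqref{gamma-k-asymptotic}. Once that is in hand, your localization to $E_x$ for $w>0$ and the dyadic-shell count for $w=0$ are the right finishing moves.
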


Notice that the kernel $R_\alpha(x,y)$ is dominated above by $1/[x,y]^{n+\alpha}$ by taking $|m|=0$ when $\alpha>-n$ in Lemma \ref{Lemma-Kernel-Estimate}. The following integral estimate of these dominating terms will be crucial to the proof our main results. For a proof see \cite[Proposition 2.2]{LS} or \cite[Lemma 4.4]{R}.

\begin{lemma}\label{Integral-[x,y]}
Let $d>-1$ and $s\in \mathbb{R}$. Then
\begin{equation*}
  \int_{\mathbb{B}} \frac{(1-|y|^2)^d}{[x,y]^{n+d+s}} \, d\nu(y) \sim
      \begin{cases}
         1, &\text{if $\, s<0$};\\
         1+\log \dfrac{1}{1-|x|^2}, &\text{if $\, s=0$}; \\
         \dfrac{1}{(1-|x|^2)^s}, &\text{if $\, s > 0$}.
     \end{cases}
\end{equation*}
\end{lemma}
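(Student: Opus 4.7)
The plan is to reduce the integral to a one-dimensional radial integral via polar coordinates, then use a standard spherical integral estimate. First I would write $y = \epsilon \zeta$ with $\epsilon \in (0,1)$ and $\zeta \in \mathbb{S}$, and use the polar coordinate formula stated in the Preliminaries together with the identity $[x, \epsilon\zeta] = \bigl| \epsilon x - \zeta \bigr|$ (a special case of $[x,y]=||y|x - y/|y||$, with $y = \epsilon\zeta$). This rewrites the integral as
\begin{equation*}
\int_{\mathbb{B}} \frac{(1-|y|^2)^d}{[x,y]^{n+d+s}}\, d\nu(y) = \int_0^1 n\epsilon^{n-1}(1-\epsilon^2)^d \left(\int_{\mathbb{S}} \frac{d\sigma(\zeta)}{|\epsilon x - \zeta|^{n+d+s}}\right) d\epsilon.
\end{equation*}

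Second, I would invoke the classical estimate for integrals of negative powers of $|w-\zeta|$ over $\mathbb{S}$: for $w\in \mathbb{B}$ and $t\in \mathbb{R}$,
\begin{equation*}
\int_{\mathbb{S}} \frac{d\sigma(\zeta)}{|w-\zeta|^{n-1+t}} \sim
\begin{cases}
1, & t<0; \\
1 + \log\dfrac{1}{1-|w|^2}, & t=0; \\
\dfrac{1}{(1-|w|^2)^{t}}, & t>0,
\end{cases}
\end{equation*}
which is a standard computation via expanding in zonal harmonics (or via the Funk--Hecke formula). Applying this with $w = \epsilon x$ and exponent parameter $t = 1+d+s$, the problem reduces to estimating a one-variable integral of the form
\begin{equation*}
\int_0^1 \frac{\epsilon^{n-1}(1-\epsilon^2)^d}{(1-\epsilon^2|x|^2)^{1+d+s}}\, d\epsilon,
\end{equation*}
(with the logarithmic or bounded analogue if $1+d+s \leq 0$).

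Third, I would do a case split on the sign of $s$. When $s < 0$, the spherical estimate is either bounded, logarithmic, or has a mild singularity $(1-\epsilon^2|x|^2)^{-(1+d+s)}$ with exponent smaller than $1+d$; in every subcase the radial weight $(1-\epsilon^2)^d$ dominates and an application of Lemma \ref{an estimate from calculus} yields a bound uniform in $x$. When $s=0$, splitting the radial integral at $\epsilon = |x|$ produces a bounded piece on $[0,|x|]$ and a logarithmic piece on $[|x|,1]$, giving the $1+\log\frac{1}{1-|x|^2}$ growth. When $s>0$, I would use $1-\epsilon^2|x|^2 \geq 1-|x|^2$ on an initial interval and then the change of variables $u = (1-\epsilon^2)/(1-\epsilon^2|x|^2)$ (or the elementary inequality $1-\epsilon^2|x|^2 \sim (1-\epsilon|x|)$ near $\epsilon = 1$) to extract the factor $(1-|x|^2)^{-s}$.

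The upper bounds ($\lesssim$) come from the above chain of estimates. For the matching lower bounds ($\gtrsim$), I would restrict the outer integration to $\epsilon \in [1/2, 1)$ and a small spherical cap around $x/|x|$, where $[x,y]$ is comparable to $\sqrt{(1-\epsilon|x|)^2 + |\zeta - x/|x||^2 \epsilon|x|}$ from below, and a direct computation reproduces the same three cases. The main obstacle is the careful case analysis of the radial integral when $s > 0$ (ensuring the lower bound really saturates the $(1-|x|^2)^{-s}$ singularity); everything else is a standard Forelli--Rudin-type manipulation and cleanly reduces to Lemma \ref{an estimate from calculus}.
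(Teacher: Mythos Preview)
The paper does not actually prove this lemma; it is stated without proof and the reader is referred to \cite[Proposition 2.2]{LS} or \cite[Lemma 4.4]{R}. Your plan---polar coordinates, the identity $[x,\epsilon\zeta]=|\epsilon x-\zeta|$, the standard spherical Forelli--Rudin estimate, and then a one-variable case split on the sign of $s$---is exactly the classical route taken in those references, so there is no discrepancy in approach to discuss.

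Your sketch is correct in outline, but two places deserve tightening before it counts as a full proof. First, in the case $s=0$ your ``split at $\epsilon=|x|$'' is not quite the efficient estimate: the cleaner argument is to use $1-\epsilon^2|x|^2 \geq \max\{1-\epsilon^2,\,1-|x|^2\}$ and then integrate $(1-\epsilon^2)^d(1-\epsilon^2|x|^2)^{-(1+d)}$ by splitting at $\epsilon^2 = |x|^2$ (or equivalently substitute $u=1-\epsilon^2$), which produces the $\log$ directly. Second, your lower-bound paragraph is underspecified: for $s\leq 0$ it suffices to restrict to, say, $|y|\leq 1/2$ where $[x,y]$ is pinched between positive constants, while for $s>0$ the sharp lower bound is most cleanly obtained by restricting to the region $\{y : [x,y] \leq 2(1-|x|)\}$ (a pseudohyperbolic-type ball around $x$), whose $\nu_d$-measure is $\sim (1-|x|^2)^{n+d}$ and on which the integrand is $\sim (1-|x|^2)^{-(n+d+s)}$. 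With those two points filled in, your argument is complete and matches the literature.
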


We can push $D^t_s$ into some certain integrals. The following lemma is taken from \cite[Lemma 2.3]{DU1}.

\begin{lemma}\label{Lemma-Push-Dst}
Let $b\in \mathbb{R}$ and $f\in L_b^1$. For every $s,t \in \mathbb{R}$ and $x\in \mathbb{B}$,
\begin{equation*}
D^t_s \int_{\mathbb{B}} R_b(x,y) f(y) d\nu_b(y) = \int_{\mathbb{B}} D^t_s R_b(x,y) f(y) d\nu_b(y).
\end{equation*}
\end{lemma}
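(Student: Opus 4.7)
The plan is to verify the identity term by term in the homogeneous expansion of $F(x) := \int_{\mathbb{B}} R_b(x,y) f(y)\, d\nu_b(y)$, using the series (\ref{Rq - Series expansion}) for the kernel and the definition (\ref{Define-Dst}) of $D^t_s$. First I would observe that $F \in h(\mathbb{B})$: since $R_b(\cdot,y)$ is harmonic and $f \in L^1_b$, differentiation under the integral sign (justified via (\ref{Rq-uniformly bounded}) on compact subsets of $\mathbb{B}$) gives $\Delta_x F = 0$, so $D^t_s F$ is a well-defined element of $h(\mathbb{B})$.

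Next I would plug the zonal expansion $R_b(x,y) = \sum_{k=0}^\infty \gamma_k(b) Z_k(x,y)$ into $F$ and interchange sum and integral to obtain the homogeneous expansion $F(x) = \sum_{k=0}^\infty F_k(x)$ with
\begin{equation*}
F_k(x) = \gamma_k(b) \int_{\mathbb{B}} Z_k(x,y)\, f(y)\, d\nu_b(y),
\end{equation*}
each $F_k$ being a homogeneous harmonic polynomial of degree $k$ in $x$ (since $Z_k(\cdot,y)$ is). The interchange is legitimate because for fixed $x \in \mathbb{B}$ the standard estimate $|Z_k(x,y)| \lesssim k^{n-2}|x|^k|y|^k$ combined with the asymptotic $\gamma_k(b) \sim k^{1+b}$ from (\ref{gamma-k-asymptotic}) yields
\begin{equation*}
\sum_{k=0}^\infty |\gamma_k(b) Z_k(x,y)| \lesssim \sum_{k=0}^\infty k^{n-1+b}|x|^k < \infty,
\end{equation*}
uniformly in $y \in \mathbb{B}$; multiplying by $|f(y)|(1-|y|^2)^b$ and integrating, Fubini applies since $f \in L^1_b$.

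Applying $D^t_s$ to $F$ using its defining formula (\ref{Define-Dst}) on the homogeneous expansion then gives
\begin{equation*}
D^t_s F(x) = \sum_{k=0}^\infty \frac{\gamma_k(s+t)}{\gamma_k(s)} \gamma_k(b) \int_{\mathbb{B}} Z_k(x,y)\, f(y)\, d\nu_b(y).
\end{equation*}
For the right-hand side of the claimed identity, I would repeat the same interchange argument: by (\ref{Define-Dst}) applied inside the first variable,
\begin{equation*}
D^t_s R_b(x,y) = \sum_{k=0}^\infty \frac{\gamma_k(s+t)}{\gamma_k(s)} \gamma_k(b) Z_k(x,y),
\end{equation*}
and since $\gamma_k(s+t)/\gamma_k(s) \sim k^t$, the combined coefficients still grow only polynomially, so the dominating estimate $k^{n-1+b+t}|x|^k$ keeps the series absolutely summable uniformly in $y$. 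Thus
\begin{equation*}
\int_{\mathbb{B}} D^t_s R_b(x,y)\, f(y)\, d\nu_b(y) = \sum_{k=0}^\infty \frac{\gamma_k(s+t)}{\gamma_k(s)} \gamma_k(b) \int_{\mathbb{B}} Z_k(x,y)\, f(y)\, d\nu_b(y),
\end{equation*}
which matches the expression for $D^t_s F(x)$ term by term.

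The only delicate step is the Fubini-type interchange of the kernel series with the integral against $f\, d\nu_b$; the main obstacle is verifying the polynomial-in-$k$ growth estimate on the zonal harmonics and combining it with the asymptotic (\ref{gamma-k-asymptotic}) to produce a summable majorant for each fixed $x \in \mathbb{B}$. Once that bound is in hand (it is uniform in $y \in \overline{\mathbb{B}}$ for $|x|$ bounded away from $1$), the rest is bookkeeping with the definition of $D^t_s$.
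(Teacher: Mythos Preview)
The paper does not supply a proof of this lemma; it simply quotes it from \cite[Lemma~2.3]{DU1}. Your argument is correct and is the natural one: expand $R_b(x,y)$ in zonal harmonics, use the standard bound $|Z_k(x,y)|\lesssim k^{n-2}|x|^k|y|^k$ together with (\ref{gamma-k-asymptotic}) to obtain an absolutely convergent majorant uniformly in $y\in\mathbb{B}$ for each fixed $x$, invoke Fubini against $|f|\,d\nu_b$, and then read off the homogeneous pieces so that the definition (\ref{Define-Dst}) of $D^t_s$ applies term by term on both sides. The same polynomial-growth estimate with the extra factor $\gamma_k(s+t)/\gamma_k(s)\sim k^t$ handles the right-hand side. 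Nothing is missing; this is exactly how one proves such interchange results for these radial operators, and it is presumably what the cited reference does as well.
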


In some cases, $D^t_s$ can be written as an integral operator. More precisely we have the following result of \cite[Corollary 2.5]{DU1}.
\begin{corollary}\label{Corollary-Dst-Integral}
Let $s>-1$ and $f\in L^1_s \cap h(\mathbb{B})$. For every $t\in \mathbb{R}$,
\begin{equation}\label{Dst - Integral operator}
  D^t_s f(x) = \int_{\mathbb{B}} R_{s+t}(x,y) f(y) d\nu_s(y).
\end{equation}
\end{corollary}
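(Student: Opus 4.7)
The plan is to deduce this from the reproducing property of $R_s$ on the classical weighted harmonic Bergman space together with the two key facts already established in the excerpt: the commutation lemma (Lemma \ref{Lemma-Push-Dst}) that lets $D^t_s$ be pushed inside the kernel integral, and the fundamental identity (\ref{Dst - Rs}) that says $D^t_s R_s = R_{s+t}$.

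Concretely, I would first invoke the Bergman reproducing formula for $f\in L^1_s\cap h(\mathbb{B})$ in the classical range $s>-1$:
\begin{equation*}
f(x)=\int_{\mathbb{B}} R_s(x,y)\,f(y)\,d\nu_s(y),\qquad x\in\mathbb{B}.
\end{equation*}
This is the standard fact that $R_s$ reproduces the weighted harmonic Bergman space $b^1_s$ when $s>-1$; it follows either from the $b^2_s$ reproducing property extended to $b^1_s$ by a density/duality argument, or directly by expanding $f=\sum_k f_k$ in its homogeneous series and checking term by term using polar coordinates and the orthogonality of homogeneous harmonics of different degrees on $\mathbb{S}$ against the zonal harmonics $Z_k$, with the interchange of sum and integral justified by the kernel estimate in Lemma \ref{Lemma-Kernel-Estimate}.

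Next, apply $D^t_s$ (acting in the $x$ variable) to both sides of the reproducing identity. Because $f\in L^1_s$, the hypothesis of Lemma \ref{Lemma-Push-Dst} (with $b=s$) is satisfied, so $D^t_s$ may be moved inside the integral:
\begin{equation*}
D^t_s f(x)=D^t_s \int_{\mathbb{B}} R_s(x,y)\,f(y)\,d\nu_s(y)=\int_{\mathbb{B}} D^t_s R_s(x,y)\,f(y)\,d\nu_s(y).
\end{equation*}
Finally, invoke (\ref{Dst - Rs}), which gives $D^t_s R_s(x,y)=R_{s+t}(x,y)$ (differentiation performed in either variable), and the formula (\ref{Dst - Integral operator}) drops out.

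The main obstacle is the clean justification of the $b^1_s$ reproducing formula when $s>-1$, since the setting here is $L^1$ rather than $L^2$. I expect the cleanest route is through the term-by-term argument using the series (\ref{Rq - Series expansion}): each projection $f\mapsto \int R_s(x,\cdot)f_k\,d\nu_s$ returns $f_k$ by the orthogonality relations of zonal harmonics (equation (\ref{Rq(x,0)}) handles the $k=0$ case and higher $k$ use the analogous orthogonality), and the series is recombined uniformly on compact subsets of $\mathbb{B}$ using the growth of $R_s(x,y)$ from Lemma \ref{Lemma-Kernel-Estimate} together with $f\in L^1_s$. Once that identity is in hand, the remaining steps are immediate applications of Lemma \ref{Lemma-Push-Dst} and identity (\ref{Dst - Rs}).
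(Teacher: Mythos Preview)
Your proposal is correct. Note, however, that the paper does not actually supply a proof of this corollary; it simply imports it as \cite[Corollary~2.5]{DU1}. Your argument---reproducing formula for $f\in b^1_s$ when $s>-1$, then Lemma~\ref{Lemma-Push-Dst} with $b=s$ to push $D^t_s$ under the integral, then identity~(\ref{Dst - Rs}) to convert $D^t_s R_s$ into $R_{s+t}$---is exactly the derivation one expects given that the statement is labeled a \emph{corollary} and placed immediately after Lemma~\ref{Lemma-Push-Dst}; it is also the argument given in~\cite{DU1}. The only step needing independent justification, as you correctly flag, is the $L^1$ reproducing identity $f(x)=\int_{\mathbb B}R_s(x,y)f(y)\,d\nu_s(y)$ for $s>-1$, and either of the routes you sketch (density plus the $b^2_s$ case, or the direct term-by-term zonal-harmonic computation) works.
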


The following lemma states that when $f\in b^{1}_{b}$$(b>-1)$, the operator $T_{bc}$ acts like $D^t_s$.
\begin{lemma}\label{Lemma-T act as Dst}
Let $b>-1$, $c\in \mathbb{R}$ and $f\in b^1_b $. Then
\begin{equation*}
  \frac{1}{V_{b}}T_{bc}f(x) = \int_{\mathbb{B}} R_{c}(x,y) f(y) d\nu_b(y)=D^{c-b}_b f(x).
\end{equation*}
\end{lemma}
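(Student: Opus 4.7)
The plan is to recognize that this lemma is essentially a direct consequence of Corollary \ref{Corollary-Dst-Integral}, reorganized in the notation of the operator $T_{bc}$. The first equality is just unravelling definitions. We have
\[
\frac{1}{V_b}T_{bc}f(x) = \frac{1}{V_b}\int_{\mathbb{B}} R_c(x,y) f(y)(1-|y|^2)^b\,d\nu(y) = \int_{\mathbb{B}} R_c(x,y) f(y)\,d\nu_b(y),
\]
since $d\nu_b(y) = \frac{1}{V_b}(1-|y|^2)^b\,d\nu(y)$ when $b>-1$. So only the second equality requires any argument.

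For the second equality, I would set $s = b$ and $t = c-b$, so that $s+t = c$. The hypothesis $b > -1$ gives $s > -1$, and the hypothesis $f \in b^1_b$ means, by the definition of weighted harmonic Bergman spaces for $b > -1$, that $f \in L^1_b \cap h(\mathbb{B}) = L^1_s \cap h(\mathbb{B})$. These are precisely the hypotheses of Corollary \ref{Corollary-Dst-Integral}, which yields
\[
D^{c-b}_b f(x) = D^t_s f(x) = \int_{\mathbb{B}} R_{s+t}(x,y) f(y)\,d\nu_s(y) = \int_{\mathbb{B}} R_c(x,y) f(y)\,d\nu_b(y),
\]
completing the proof.

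There is no real obstacle here; the lemma is essentially a translation of Corollary \ref{Corollary-Dst-Integral} into the language of $T_{bc}$, and the main point is simply to verify that the hypothesis $b > -1$ together with $f \in b^1_b$ matches the hypothesis $s > -1$ with $f \in L^1_s \cap h(\mathbb{B})$ required by the corollary. The lemma's value is conceptual: it records the fact that on the subspace $b^1_b$ (with $b>-1$), the integral operator $T_{bc}$ is (up to the constant $V_b$) nothing other than the radial differential operator $D^{c-b}_b$, a principle that will be used in the necessity proofs to transfer boundedness of $T_{bc}$ to statements about inclusions and differentiation on harmonic Bergman-Besov and Bloch spaces.
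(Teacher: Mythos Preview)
Your proof is correct and is exactly the approach taken in the paper, which simply says the result is obvious from the definition of $d\nu_b$ and Corollary~\ref{Corollary-Dst-Integral}. You have just spelled out those two steps in more detail.
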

\begin{proof}
It is obvious from the definition of   $d\nu_b$ and the previous corollary.
\end{proof}

The following result is significant in our necessity proofs.

\begin{lemma}\label{Composition of T, Dst}
If $b+t>-1$, then $T_{bc}I_{b}^{t}h=C D_{b}^{c-b}h$ for $h\in b^1_b $. As consequences, $D_{c}^{b-c}T_{bc}I_{b}^{t}h=Ch$ for $h\in b^1_b $ and
$T_{bc}I_{b}^{t}D_{c}^{b-c}h=Ch$ for $h\in b^1_c $.
\end{lemma}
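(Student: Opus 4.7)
The plan is to reduce $T_{bc}I_{b}^{t}h$ to a direct application of Lemma \ref{Lemma-T act as Dst} after absorbing the factor $(1-|y|^{2})^{t}$ from the definition of $I_{b}^{t}$ into the weight of the integral. Unfolding the definitions gives
\begin{equation*}
T_{bc}I_{b}^{t}h(x)=\int_{\mathbb{B}}R_{c}(x,y)\,D_{b}^{t}h(y)\,(1-|y|^{2})^{b+t}\,d\nu(y)=T_{(b+t)c}\bigl(D_{b}^{t}h\bigr)(x).
\end{equation*}
Since $b+t>-1$, the integral on the right is (up to the factor $V_{b+t}$) an integration against $d\nu_{b+t}$, and Lemma \ref{Apply-Dst}(i) ensures $D_{b}^{t}h\in b^{1}_{b+t}$ whenever $h\in b^{1}_{b}$. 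Thus Lemma \ref{Lemma-T act as Dst}, applied with its parameter $b$ replaced by $b+t$ and with $f=D_{b}^{t}h$, yields
\begin{equation*}
T_{bc}I_{b}^{t}h(x)=V_{b+t}\,D_{b+t}^{c-b-t}\bigl(D_{b}^{t}h\bigr)(x).
\end{equation*}

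Next I would invoke the additive property (\ref{Additive-Dst}), which with $(s,t,z)=(b,t,c-b-t)$ gives $D_{b+t}^{c-b-t}D_{b}^{t}=D_{b}^{c-b}$. This collapses the right-hand side to $V_{b+t}\,D_{b}^{c-b}h$, establishing the main identity with the explicit constant $C=V_{b+t}$.

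For the two consequences I would rely only on the invertibility relation $D_{c}^{b-c}D_{b}^{c-b}=I$, which is a direct instance of (\ref{inverse of Dst}) (or equivalently another application of (\ref{Additive-Dst})). Applying $D_{c}^{b-c}$ to both sides of the main identity at once produces $D_{c}^{b-c}T_{bc}I_{b}^{t}h=Ch$ for $h\in b^{1}_{b}$. For the last statement, note that by Lemma \ref{Apply-Dst}(i) the operator $D_{c}^{b-c}$ sends $b^{1}_{c}$ isomorphically onto $b^{1}_{b}$, so I may substitute $D_{c}^{b-c}h$ (with $h\in b^{1}_{c}$) into the main identity in place of the argument $h$; this gives $T_{bc}I_{b}^{t}D_{c}^{b-c}h=C\,D_{b}^{c-b}D_{c}^{b-c}h=Ch$.

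There is really only one point requiring care: verifying the hypothesis $f\in b^{1}_{b'}$ with $b'>-1$ needed to apply Lemma \ref{Lemma-T act as Dst}. The assumption $b+t>-1$ is designed exactly for this, and combined with the mapping property of $D_{b}^{t}$ in Lemma \ref{Apply-Dst} it is immediate. All remaining manipulations are formal identities between the $D_{s}^{t}$'s, so I do not anticipate any substantive obstacle.
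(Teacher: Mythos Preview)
Your proof is correct and follows essentially the same route as the paper's: unfold $T_{bc}I_{b}^{t}h$ as $T_{b+t,c}(D_{b}^{t}h)$, use Lemma~\ref{Apply-Dst}(i) to place $D_{b}^{t}h$ in $b^{1}_{b+t}$, apply Lemma~\ref{Lemma-T act as Dst} and the additive identity (\ref{Additive-Dst}), and then deduce the two consequences from (\ref{inverse of Dst}). Your write-up is in fact slightly more explicit, since you identify the constant as $C=V_{b+t}$.
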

\begin{proof}
If $b+t>-1$ and $h\in b^1_b $, then $D_{b}^{t}h\in  b^1_{b+t}\subset L^1_{b+t}$ by Lemma \ref{Apply-Dst} (i). Since
\begin{equation*}
 T_{bc}I_{b}^{t}h(x) = \int_{\mathbb{B}} R_{c}(x,y) D_{b}^{t}h(y)(1-|y|^2)^{b+t} d\nu(y),
\end{equation*}
we have
\begin{equation*}
 T_{bc}I_{b}^{t}h =T_{b+t,c}D_{b}^{t}h=C D_{b+t}^{c-b-t} D_{b}^{t}h=CD_{b}^{c-b}h.
\end{equation*}
by Lemma \ref{Lemma-T act as Dst} and (\ref{Additive-Dst}). The identities on  triple compositions are just consequences of the identities in (\ref{inverse of Dst}).
\end{proof}

We require the inclusion relations between harmonic Bergman-Besov and we\-igh\-ted Bloch spaces in necessity proofs. We refer to \cite{DU2} for results on inclusions where also references  to earlier work can be found.

First, we single out  the following simple inclusions:
\begin{equation}\label{Inclusion}
 b^{p}_\alpha \subset b^{p}_{\beta} \qquad  and \qquad  b^{\infty}_\alpha \subset b^{\infty}_{\beta} \qquad ( \alpha \leq \beta).
\end{equation}

We have the following inclusion relations between harmonic Bergman-Besov spaces. For  proofs see \cite[Theorems 1.1 and 1.2]{DU2}.
\begin{theorem}\label{Besov-Besov1}
Let $0<q<p<\infty$ and $\alpha,\beta\in \mathbb{R}$. Then
\[
b^p_\alpha \subset b^q_\beta \quad \text{if and only if} \quad \frac{\alpha+1}{p} < \frac{\beta+1}{q}.
\]
\end{theorem}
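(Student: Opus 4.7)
The plan is to prove both implications separately. For sufficiency, the idea is to reduce to the classical weighted Bergman range $\alpha,\beta>-1$ using the isomorphism $D^t_s$, and then apply H\"older's inequality directly. For necessity, I would upgrade the set-theoretic inclusion to a continuous one via the closed graph theorem, and then test it against a family of harmonic functions whose boundary asymptotics can be precisely tuned.

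For sufficiency, Lemma \ref{Apply-Dst} says that for any $t\in\mathbb{R}$ the operators $D^t_s:b^p_\alpha\to b^p_{\alpha+pt}$ and $D^t_s:b^q_\beta\to b^q_{\beta+qt}$ are isomorphisms, and the hypothesis $\frac{\alpha+1}{p}<\frac{\beta+1}{q}$ is unchanged when $t$ is added to both sides. Choosing $t$ large enough to make $\alpha+pt>-1$ and $\beta+qt>-1$, we may assume from the start that $\alpha,\beta>-1$. In this reduced setting, for any measurable $f$ I would apply H\"older's inequality with conjugate exponents $p/q$ and $p/(p-q)$ to the splitting
\[
|f(x)|^q(1-|x|^2)^\beta \,=\, \bigl(|f(x)|^q(1-|x|^2)^{\alpha q/p}\bigr)\bigl((1-|x|^2)^{\beta-\alpha q/p}\bigr).
\]
The first factor raised to the power $p/q$ integrates to a constant multiple of $\|f\|_{L^p_\alpha}^p$; the second factor raised to $p/(p-q)$ integrates by Lemma \ref{an estimate from calculus} precisely when $(\beta-\alpha q/p)\cdot p/(p-q)>-1$, which rearranges to $p(\beta+1)>q(\alpha+1)$, i.e.\ the hypothesis. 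This gives the full inclusion $L^p_\alpha\subset L^q_\beta$ in the reduced range, and in particular the desired harmonic inclusion.

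For necessity, both $b^p_\alpha$ and $b^q_\beta$ are complete (quasi-)Banach spaces continuously embedded in $h(\mathbb{B})$, so the closed graph theorem turns any set-theoretic inclusion into a bounded one: $\|f\|_{b^q_\beta}\leq C\|f\|_{b^p_\alpha}$ for every $f\in b^p_\alpha$. To extract the inequality $\frac{\alpha+1}{p}<\frac{\beta+1}{q}$, I would test this bound against a family of harmonic functions whose $b^p_\alpha$- and $b^q_\beta$-norms can be computed asymptotically via Lemma \ref{norm-kernel}. The naive choice $f_{y_0}=R_c(\cdot,y_0)$ with $|y_0|\to 1$ yields only the weaker condition $\frac{n+\alpha}{p}\leq\frac{n+\beta}{q}$, because the Bergman kernel concentrates isotropically around $y_0$. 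The main obstacle is thus producing harmonic test functions that concentrate narrowly along a single normal ray, for instance lacunary series $\sum_k a_k Z_{m_k}(x,\zeta)$ with suitably chosen coefficients and rapidly growing degrees, or families obtained from repeated radial differentiation of $R_c(x,r\zeta)$ as $r\to 1$. The anisotropy between tangential and normal directions on $\mathbb{S}$ is exactly what accounts for the constant $1$ in place of $n$ in the final inequality.
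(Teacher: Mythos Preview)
The paper does not actually prove Theorem~\ref{Besov-Besov1}; it is quoted as a tool and the proof is deferred entirely to \cite{DU2}. So there is no in-paper argument to compare against, and the question reduces to whether your proposal stands on its own.

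Your sufficiency direction is complete and correct. The $D^t_s$ reduction to the Bergman range $\alpha,\beta>-1$ via Lemma~\ref{Apply-Dst} is legitimate (the hypothesis is invariant under the shift, as you note), and the H\"older splitting then gives the full Lebesgue inclusion $L^p_\alpha\subset L^q_\beta$, which is more than enough. This is the standard argument.

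Your necessity direction, however, has a genuine gap. You correctly diagnose that testing against $R_c(\cdot,y_0)$ yields only $\frac{n+\alpha}{p}\le\frac{n+\beta}{q}$, and you correctly identify the reason: the kernel spreads over a ball of radius $\sim 1-|y_0|$ rather than concentrating on a ray, so it detects the $n$-dimensional volume scaling rather than the one-dimensional radial scaling. But you then stop at the obstacle. Naming ``lacunary series $\sum_k a_k Z_{m_k}(x,\zeta)$'' or ``repeated radial differentiation'' as candidates is not a proof; it is precisely this construction and its norm estimates that carry the entire content of the necessity. For the lacunary route one must (i) show that geometrically growing degrees make the terms essentially supported on disjoint dyadic annuli, so that the $b^p_\alpha$- and $b^q_\beta$-norms reduce to weighted $\ell^p$- and $\ell^q$-sums of the $|a_k|$, and (ii) choose $(a_k)$ in $\ell^p\setminus\ell^q$ (possible exactly because $q<p$) with the right weights to force $\frac{\alpha+1}{p}<\frac{\beta+1}{q}$. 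None of this is carried out, and the closing sentence about anisotropy, while suggestive, does not substitute for the computation. As written, the necessity remains unproved.
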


\begin{theorem}\label{Besov-Besov2}
Let $0<p\leq q<\infty$ and $\alpha,\beta\in\mathbb{R}$. Then
\[
b^p_\alpha \subset b^q_\beta \quad \text{if and only if} \quad \frac{\alpha+n}{p} \leq \frac{\beta+n}{q}.
\]
\end{theorem}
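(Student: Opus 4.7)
The plan is to establish both implications after first reducing to the classical weighted Bergman range $\alpha,\beta>-1$. For the reduction, choose any $t\in\mathbb{R}$ with $\alpha+pt>-1$ and $\beta+qt>-1$ simultaneously (just take $t$ sufficiently large). Using a common pair $(s,t)$ in Definition \ref{definition of the h B-B space} for both spaces, Lemma \ref{Apply-Dst}(i) shows that $D^t_s$ is an isomorphism $b^p_\alpha\to b^p_{\alpha+pt}$ and $b^q_\beta\to b^q_{\beta+qt}$ at once. Because $(n+\alpha+pt)/p=(n+\alpha)/p+t$ and $(n+\beta+qt)/q=(n+\beta)/q+t$, the inclusion-defining inequality is invariant, so $b^p_\alpha\subset b^q_\beta$ iff $b^p_{\alpha+pt}\subset b^q_{\beta+qt}$. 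Hence we may assume throughout that $\alpha,\beta>-1$.

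For sufficiency, suppose $(n+\alpha)/p\leq(n+\beta)/q$. When $p=q$ this forces $\alpha\leq\beta$, and then $(1-|x|^2)^\beta\leq(1-|x|^2)^\alpha$ on $\mathbb{B}$ gives $\|f\|_{b^q_\beta}\lesssim\|f\|_{b^p_\alpha}$ directly from the weighted Bergman norm definitions. For the harder case $p<q$, the key tool is the standard pointwise estimate
\begin{equation*}
|f(x)|\;\leq\;C\,\|f\|_{b^p_\alpha}\,(1-|x|^2)^{-(n+\alpha)/p}\qquad(f\in b^p_\alpha,\,\alpha>-1),
\end{equation*}
valid for every $0<p<\infty$; this follows from the sub-mean-value property of $|f|^p$ for $p\geq 1$ and from the Hardy--Littlewood comparability of $L^p$-averages of harmonic functions for general $p>0$, applied on a Euclidean ball $B(x,(1-|x|)/2)\subset\mathbb{B}$. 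Splitting $|f|^q=|f|^{q-p}|f|^p$ and applying the bound to the first factor,
\begin{equation*}
\|f\|^q_{b^q_\beta}\;\lesssim\;\|f\|_{b^p_\alpha}^{q-p}\int_{\mathbb{B}}|f(x)|^p\,(1-|x|^2)^{\beta-(q-p)(n+\alpha)/p}\,d\nu(x).
\end{equation*}
An elementary check shows that $(n+\alpha)/p\leq(n+\beta)/q$ is equivalent to $\beta-(q-p)(n+\alpha)/p\geq\alpha$, so the last integral is dominated by a constant multiple of $\|f\|^p_{b^p_\alpha}$, finishing the sufficiency.

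For necessity, assume $b^p_\alpha\subset b^q_\beta$. Both spaces are $F$-spaces (Banach when $p,q\geq 1$, quasi-Banach otherwise), so the closed graph theorem makes the inclusion continuous: $\|f\|_{b^q_\beta}\lesssim\|f\|_{b^p_\alpha}$. Test with the family $f_a:=R_t(\cdot,a)$, $a\in\mathbb{B}$, where $t$ is chosen large enough that both $(p-1)n+pt-\alpha>0$ and $(q-1)n+qt-\beta>0$. The identity $D^{t'}_t R_t(\cdot,a)=R_{t+t'}(\cdot,a)$ from (\ref{Dst - Rs}), combined with Lemma \ref{norm-kernel} applied to $R_{t+t'}$ (by symmetry the integration variable plays no role), yields the asymptotics
\begin{equation*}
\|R_t(\cdot,a)\|^p_{b^p_\alpha}\sim(1-|a|^2)^{-((p-1)n+pt-\alpha)},\qquad\|R_t(\cdot,a)\|^q_{b^q_\beta}\sim(1-|a|^2)^{-((q-1)n+qt-\beta)}.
\end{equation*}
Substituting into $\|f_a\|_{b^q_\beta}\lesssim\|f_a\|_{b^p_\alpha}$ and letting $|a|\to 1^-$ forces $((q-1)n+qt-\beta)/q\leq((p-1)n+pt-\alpha)/p$, which, after the $t$'s cancel, reduces precisely to $(n+\alpha)/p\leq(n+\beta)/q$.

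The only delicate point is justifying the pointwise growth estimate in the quasi-Banach range $0<p<1$, where $|f|^p$ is no longer subharmonic for real-harmonic $f$; the remedy is the classical fact that the $L^p$-norm of a harmonic function on a Euclidean ball controls its supremum on the concentric half-sized ball for every $p>0$, after which the averaging argument over $B(x,(1-|x|)/2)$ proceeds as usual. Everything else is bookkeeping with Lemma \ref{norm-kernel} and the exponent arithmetic.
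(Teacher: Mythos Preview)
The paper does not give its own proof of this theorem; it simply cites \cite[Theorems 1.1 and 1.2]{DU2}. So there is no in-paper argument to compare against, and I can only assess correctness.

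Your argument is correct. The reduction to $\alpha,\beta>-1$ via $D^t_s$ is legitimate: by Lemma \ref{Apply-Dst}(i) the same operator $D^t_s$ is a simultaneous isomorphism $b^p_\alpha\to b^p_{\alpha+pt}$ and $b^q_\beta\to b^q_{\beta+qt}$, and since $D^t_s$ is a bijection on $h(\mathbb{B})$ the inclusion transfers; the inequality $(n+\alpha)/p\le(n+\beta)/q$ is manifestly invariant under $(\alpha,\beta)\mapsto(\alpha+pt,\beta+qt)$. For sufficiency, the splitting $|f|^q=|f|^{q-p}|f|^p$ together with the pointwise bound $|f(x)|\lesssim\|f\|_{b^p_\alpha}(1-|x|^2)^{-(n+\alpha)/p}$ is the standard route; your exponent arithmetic $\beta-(q-p)(n+\alpha)/p\ge\alpha\Longleftrightarrow(n+\alpha)/p\le(n+\beta)/q$ is right, and you correctly flag the one subtlety (subharmonicity of $|f|^p$ fails for $0<p<1$) and supply the correct remedy. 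For necessity, the closed graph theorem applies in the quasi-Banach range as well (convergence in either norm implies locally uniform convergence, so the graph is closed), and testing on $R_t(\cdot,a)$ with Lemma \ref{norm-kernel} gives exactly the claimed asymptotics after the symmetry of $R_t$ is used; the cancellation of $t$ in the final comparison is correct. One small redundancy: once you have reduced to $\alpha,\beta>-1$, you can take $N=0$ in the Bergman norm and apply Lemma \ref{norm-kernel} directly, so the detour through $D^{t'}_t R_t=R_{t+t'}$ is not needed.
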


We also have the following inclusion relation between a Bergman-Besov space $b^p_\alpha$ and a weighted Bloch space $b^\infty_\beta$. For a proof see \cite[Theorem 1.3]{DU2}.
\begin{theorem}\label{Besov-Bloch}
Let $0<p<\infty$ and $\alpha,\beta\in \mathbb{R}$. Then

\begin{enumerate}
  \item[(i)] $b^\infty_\beta \subset b^p_\alpha$ if and only if $\displaystyle \beta<\frac{\alpha+1}{p}$.
  \item[(ii)] $b^p_\alpha \subset b^\infty_\beta$ if and only if $\displaystyle \beta \geq \frac{\alpha+n}{p}$.
\end{enumerate}
\end{theorem}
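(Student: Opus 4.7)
The plan is to prove both parts by combining direct growth estimates (for sufficiency) with closed-graph-plus-test-function arguments (for necessity), in each case using the radial operators $D^t_s$ to convert the Bergman and Bloch norms into concrete weighted integrals or suprema that can be compared.

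For sufficiency I would first fix $s,t$ so that the pair $(s,t)$ simultaneously satisfies the defining conditions of both spaces involved, namely $\beta+t>0$ for $b^\infty_\beta$ and $\alpha+pt>-1$ for $b^p_\alpha$; this is possible by taking $t$ large enough. For part (i), the Bloch-type bound $|D^t_s f(x)| \le \|f\|_{b^\infty_\beta}(1-|x|^2)^{-\beta-t}$ reduces $\|f\|_{b^p_\alpha}^p$ to a constant multiple of $\int_{\mathbb B}(1-|x|^2)^{\alpha-p\beta}\,d\nu(x)$, which by Lemma \ref{an estimate from calculus} is finite precisely when $\alpha-p\beta>-1$, i.e.\ when $\beta<(\alpha+1)/p$. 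For part (ii), applying the standard point-evaluation bound $|g(x)| \lesssim (1-|x|^2)^{-(n+\gamma)/p}\|g\|_{b^p_\gamma}$ for $g\in b^p_\gamma$, $\gamma>-1$ (itself proved from the reproducing formula plus a Forelli-Rudin estimate) to $g = D^t_s f \in b^p_{\alpha+pt}$ gives $(1-|x|^2)^{\beta+t}|D^t_s f(x)| \lesssim (1-|x|^2)^{\beta-(n+\alpha)/p}\|f\|_{b^p_\alpha}$, which is bounded exactly when $\beta \ge (n+\alpha)/p$.

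For necessity I would apply the closed graph theorem to upgrade the inclusion to a bounded embedding, and then test on families of reproducing kernels $f_a(x) = R_c(x,a)$ with $c$ chosen large (so the relevant kernel estimates in Lemma \ref{Lemma-Kernel-Estimate} have $c+t>-n$ and so that Lemma \ref{norm-kernel} has a positive exponent). Taking $s=c$ and using $D^t_c R_c(\cdot,a)=R_{c+t}(\cdot,a)$, Lemma \ref{norm-kernel} together with a standard supremum Forelli-Rudin-type estimate of $(1-|x|^2)^u [x,a]^{-v}$ yield $\|f_a\|_{b^p_\alpha} \sim (1-|a|^2)^{(n+\alpha)/p-(n+c)}$ and $\|f_a\|_{b^\infty_\beta} \sim (1-|a|^2)^{\beta-(n+c)}$. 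Letting $|a|\to 1$ in the embedding inequality of part (ii) immediately forces $\beta \ge (n+\alpha)/p$.

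The main obstacle is the necessity in part (i): a single kernel only produces $\beta \le (n+\alpha)/p$, which is strictly weaker than $\beta<(\alpha+1)/p$ when $n>1$. The remedy is a near-orthogonal superposition $f_r(x) = \sum_{k=1}^{N_r} R_c(x,a_k)$ with $a_1,\dots,a_{N_r}$ well-separated points on the sphere $|y|=r$, where $N_r \sim (1-r)^{-(n-1)}$. Because the peaks of the individual kernels are essentially disjoint, one expects $\|f_r\|_{b^p_\alpha}^p \sim \sum_k \|R_c(\cdot,a_k)\|_{b^p_\alpha}^p \sim N_r(1-r^2)^{(n+\alpha)-p(n+c)}$ while $\|f_r\|_{b^\infty_\beta} \sim (1-r^2)^{\beta-(n+c)}$, so that the ratio scales as $(1-r^2)^{(\alpha+1)/p-\beta}$; boundedness as $r\to 1$ then yields $\beta \le (\alpha+1)/p$, and the strict inequality is obtained by a final logarithmic (or perturbation) refinement at the borderline. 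Making the near-disjoint-peak estimates rigorous and ruling out the equality case will be the technically hardest step.
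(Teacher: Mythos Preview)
The paper does not supply its own proof of this statement; it simply cites \cite[Theorem~1.3]{DU2}. So there is nothing in the present text to compare your attempt against directly.

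On the merits of your outline: the sufficiency arguments for both (i) and (ii) are correct and standard, as is your necessity argument for (ii) via testing on $R_c(\cdot,a)$ and using Lemmas~\ref{Lemma-Kernel-Estimate}, \ref{x-equal-y}, \ref{norm-kernel}. For the necessity in (i) your diagnosis is accurate---a single kernel only yields $\beta\le(\alpha+n)/p$---and the separated-point superposition $\sum_k R_c(\cdot,a_k)$ is a legitimate route. The genuine gap in your plan is that you have not indicated how you will obtain the \emph{lower} bound $\|f_r\|_{b^p_\alpha}^p\gtrsim N_r(1-r^2)^{(n+\alpha)-p(n+c)}$ (the upper bound on the Bloch norm is the easier direction); this requires a quantitative localization showing that near each $a_k$ the $k$th summand dominates the tail, which in turn needs control of $\sum_{j\ne k}[x,a_j]^{-(n+c+t)}$ on a Bergman-like ball around $a_k$. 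This is doable but is a real piece of work, not a perturbation argument. An alternative that is often cleaner (and is one of the approaches used in \cite{DU2}) is a lacunary series $f=\sum_k 2^{-k\beta}Y_{2^k}$ with $Y_m$ a spherical harmonic of degree $m$ and $\|Y_m\|_{L^\infty(\mathbb{S})}=1$: the Hadamard gap gives exact two-sided control of both the $b^\infty_\beta$ and $b^p_\alpha$ norms via orthogonality in the radial variable, and the endpoint $\beta=(\alpha+1)/p$ is then excluded by the divergent integral $\int_0^1(1-r)^{-1}\,dr$ rather than by a separate ``logarithmic refinement''.
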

Note that all the inclusions above are continuous, strict, and the best possible.

 We now mention two more theorems about inclusion relations that we will invoke later. The following theorem gives the inclusion relation between
$h^{\infty}$ and $b^{\infty}_{\alpha }$.

\begin{theorem} \label{hinfinity1}
Let  $\alpha \in \mathbb{R}$.
\begin{enumerate}
  \item[(i)] If $\alpha <0$, then $ b^{\infty}_\alpha\subset h^{\infty}$.
  \item[(ii)]If $\alpha \geq 0$, then $h^{\infty}\subset b^{\infty}_{\alpha }$.
\end{enumerate}
\end{theorem}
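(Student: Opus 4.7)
The plan is to handle the two inclusions by rather different means: for (ii) I reduce to the ``base'' Bloch space $b^\infty_{0}$ via the trivial inclusion (\ref{Inclusion}) and then invoke a standard Cauchy-type estimate for harmonic functions; for (i) I recover $f$ by the Bergman-Besov reproducing formula in Theorem \ref{Theorem-Projection} and bound the resulting integral by Lemma \ref{norm-kernel}.

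For part (ii), since $\alpha\geq 0$ gives $b^\infty_{0}\subset b^\infty_{\alpha}$ by (\ref{Inclusion}), it suffices to show $h^\infty\subset b^\infty_{0}$. Using the partial-derivative definition of $b^\infty_{0}$ with $N=1$, what is needed is $\sup_{x\in\mathbb{B}}(1-|x|^{2})|\partial_{j}f(x)|<\infty$ for $f\in h^\infty$ and $j=1,\dots,n$. This is the standard subharmonic / Cauchy-type estimate: for any bounded harmonic $f$ on $\mathbb{B}$ one has $|\partial_{j}f(x)|\lesssim \|f\|_{L^\infty}/(1-|x|)$, which after multiplication by $(1-|x|^{2})\sim(1-|x|)$ yields a uniform bound. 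Hence $h^\infty\subset b^\infty_{0}\subset b^\infty_{\alpha}$, and the inclusion is continuous.

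For part (i), fix $f\in b^\infty_{\alpha}$ with $\alpha<0$, and choose $s,t\in\mathbb{R}$ satisfying $s>\alpha-1$ and $\alpha+t>0$; both conditions can be met simultaneously (e.g. take $s$ large and $t>-\alpha$). By Theorem \ref{Theorem-Projection} the reproducing identity $Q_{s}I^{t}_{s}f=(V_{s+t}/V_{s})f$ holds, so
\begin{equation*}
f(x)=\frac{1}{V_{s+t}}\int_{\mathbb{B}}R_{s}(x,y)\,D^{t}_{s}f(y)\,(1-|y|^{2})^{s+t}\,d\nu(y).
\end{equation*}
From the $b^\infty_{\alpha}$-norm definition, $|D^{t}_{s}f(y)|\leq \|f\|_{b^\infty_{\alpha}}(1-|y|^{2})^{-(\alpha+t)}$, which plugged into the representation gives
\begin{equation*}
|f(x)|\lesssim \|f\|_{b^\infty_{\alpha}}\int_{\mathbb{B}}|R_{s}(x,y)|\,(1-|y|^{2})^{s-\alpha}\,d\nu(y).
\end{equation*}
Now I apply Lemma \ref{norm-kernel} with $p=1$, kernel parameter $s$, and weight $d=s-\alpha$; the hypothesis $d>-1$ becomes $s>\alpha-1$, which was arranged, and the critical exponent is $w=(n+s)-(n+d)=\alpha<0$. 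The lemma therefore bounds the integral uniformly in $x$, producing $\|f\|_{L^\infty}\lesssim \|f\|_{b^\infty_{\alpha}}$, so $f\in h^\infty$ with continuous inclusion.

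The main obstacle is really bookkeeping: ensuring the choice of $(s,t)$ simultaneously satisfies the projection hypothesis $s>\alpha-1$ (so that Theorem \ref{Theorem-Projection} applies and Lemma \ref{norm-kernel} is valid) and the Bloch-defining condition $\alpha+t>0$, and then observing that the resulting Forelli-Rudin exponent $w=\alpha$ is exactly $<0$ because we are in the regime $\alpha<0$ of part (i). No delicate cancellation or new estimate beyond the quoted lemmas is required.
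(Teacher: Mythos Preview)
Your proof is correct and follows essentially the same approach as the paper: part (i) is handled identically via the reproducing formula of Theorem \ref{Theorem-Projection} together with Lemma \ref{norm-kernel}, and part (ii) rests on the same Cauchy-type gradient estimate. The only cosmetic difference is that for $\alpha>0$ the paper argues directly that $f\in\mathcal{L}^\infty_\alpha$ (since $(1-|x|^2)^\alpha\le 1$), whereas you reduce uniformly to the case $\alpha=0$ via the inclusion (\ref{Inclusion}); both routes are equally valid.
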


\begin{proof}
(i): Let $\alpha <0$. Pick $s,t\in \mathbb{R}$ such that $\alpha+t>0$ and $s>\alpha-1$ holds. Assume that  $f\in b^{\infty}_\alpha$. By (\ref{QsIst=f}) we have the following integral representation
\begin{equation*}
f(x)=\frac{V_s}{V_{s+t}}\int_{\mathbb{B}} R_s(x,y) I^{t}_{s} f(y) (1-|y|^{2})^{s} \, d\nu(y),
\end{equation*}
and therefore
\begin{equation*}
|f(x)|\lesssim \int_{\mathbb{B}} |R_s(x,y)| |I^{t}_{s} f(y)| (1-|y|^{2})^{s} \, d\nu(y).
\end{equation*}
Using that $\|f\|_{b^{\infty}_{\alpha }}=\|I^{t}_{s} f\|_{L^{\infty}_{\alpha}}=\sup_{x\in\mathbb{B}} (1-|x|^{2})^{\alpha}|I^{t}_{s} f(x)|$, we get
 $(1-|x|^{2})^{\alpha}|I^{t}_{s} f(x)| \-\leq \|f\|_{b^{\infty}_{\alpha }}$ for all  $x\in\mathbb{B}$. Thus
\begin{equation*}
 |f(x)|\lesssim \|f\|_{b^{\infty}_{\alpha}}\int_{\mathbb{B}} |R_s(x,y)| (1-|y|^{2})^{s-\alpha} \, d\nu(y).
\end{equation*}
Since $s-\alpha>-1$ and $n+s-(n+s-\alpha)=\alpha<0$, by Lemma \ref{norm-kernel} we have
 $|f(x)|\lesssim \|f\|_{b^{\infty}_{\alpha }}$ for all $x\in\mathbb{B}$. We conclude that $f\in h^{\infty}$.

(ii) Let $f\in h^{\infty}$. First, we take $\alpha>0$. So that it is enough to show that $f\in \mathcal{L}^{\infty}_{\alpha }$. Since $f\in h^{\infty}$, there exist an $M>0$ such that $|f(x)|\leq M$ for all  $x\in\mathbb{B}$. Together with  $(1-|x|^{2})^{\alpha}\leq 1$, this yields $(1-|x|^{2})^{\alpha}|f(x)|\leq M$  for all $x\in\mathbb{B}$. Hence we have $f\in \mathcal{L}^{\infty}_{\alpha }$ and this implies $f\in b^{\infty}_{\alpha }$.

Let now $\alpha=0$. This time we must show that $\sup_{x\in\mathbb{B}} (1-|x|^{2})|\nabla f(x)|<\infty$.  Since $f\in h^{\infty}$, again there exist an $M>0$ such that $|f(x)|\leq M$ for all  $x\in\mathbb{B}$. By Cauchy's estimate (see \cite[2.4]{ABR}), there exists a positive constant $C$ such that
\begin{equation*}
|\nabla u(x)|\leq \frac{C M}{r},
\end{equation*}
for every $x\in \mathbb{B}$. Since $1+|x|\leq 2$ when $x\in\mathbb{B}$, we obtain
\begin{equation*}
 (1-|x|^{2})|\nabla u(x)|\leq \frac{2 C  M (1-|x|)}{r}\leq 4 C M.
\end{equation*}
 Hence $f\in b^{\infty}_{0 }$.
\end{proof}

We also have the following inclusion theorem between $h^{\infty}$ and $b^{p}_{\alpha }$. For a proof see Section 13.1 of \cite{GKU2} and discussion in there  when $1\leq p< \infty$ and \cite[Theorem 5.1]{DOG} when $0< p< 1$.
\begin{theorem}\label{hinfinity2}
Let $0<p<\infty$ and $\alpha\in \mathbb{R}$. Then
\[
b^p_\alpha \subset h^\infty \quad \text{if and only if} \quad \alpha < -n, \, \text{ or } \, \alpha=-n \quad \text{and} \quad 0<p\leq 1.
\]
\end{theorem}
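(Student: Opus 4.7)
The plan is to prove sufficiency by combining previously stated inclusion theorems with a direct reproducing-kernel estimate at the critical parameter, and to prove necessity by testing the (automatically continuous, by the closed graph theorem) inclusion against the natural family of reproducing kernels $R_s(\cdot,y_0)$.

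For sufficiency, when $\alpha<-n$ we chain Theorem \ref{Besov-Bloch}(ii) at equality (giving $b^p_\alpha\subset b^\infty_{(\alpha+n)/p}$) with Theorem \ref{hinfinity1}(i) (giving $b^\infty_\beta\subset h^\infty$ when $\beta<0$), noting $(\alpha+n)/p<0$. For the endpoint $\alpha=-n$ with $0<p\le 1$, first invoke Theorem \ref{Besov-Besov2} with $(\alpha+n)/p=0=(-n+n)/1$ to obtain $b^p_{-n}\subset b^1_{-n}$, reducing to the case $p=1$. For $f\in b^1_{-n}$, pick $s>-n$ and $t>n-1$ so that both (\ref{Projection-Besov-S}) and (\ref{alpha+pt}) are satisfied with $p=1$, $\alpha=-n$, and apply the reproducing identity (\ref{QsIst=f1}) to obtain
\[
f(x)=\frac{1}{V_{s+t}}\int_\mathbb{B} R_s(x,y)\,D^t_sf(y)\,(1-|y|^2)^{s+t}\,d\nu(y).
\]
Combining $|R_s(x,y)|\lesssim[x,y]^{-(n+s)}$ from Lemma \ref{Lemma-Kernel-Estimate} with $1-|y|^2\le 2[x,y]$ (which follows from $1-|y|\le 1-|x||y|\le[x,y]$) yields $|R_s(x,y)|(1-|y|^2)^{s+n}\lesssim 1$ uniformly in $x,y$, whence $|f(x)|\lesssim\|f\|_{b^1_{-n}}$.

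For necessity, the closed graph theorem upgrades any set-theoretic inclusion $b^p_\alpha\subset h^\infty$ into a continuous one: $|f(x)|\lesssim\|f\|_{b^p_\alpha}$ uniformly in $x\in\mathbb{B}$ and $f\in b^p_\alpha$. Test against $f_{s,y_0}(x):=R_s(x,y_0)$ for fixed $s>-n$ and variable $y_0\in\mathbb{B}$. By Lemma \ref{x-equal-y}, $|f_{s,y_0}(y_0)|\sim(1-|y_0|^2)^{-(n+s)}$. On the other hand, (\ref{Dst - Rs}) gives $D^t_s f_{s,y_0}(\cdot)=R_{s+t}(\cdot,y_0)$, so Lemma \ref{norm-kernel} with $d=\alpha+pt$ computes, for $s$ chosen large enough that $w:=p(n+s)-(n+\alpha)>0$,
\[
\|f_{s,y_0}\|_{b^p_\alpha}\sim(1-|y_0|^2)^{-w/p}.
\]
The ratio $|f_{s,y_0}(y_0)|/\|f_{s,y_0}\|_{b^p_\alpha}\sim(1-|y_0|^2)^{-(n+\alpha)/p}$ blows up as $|y_0|\to 1$ whenever $\alpha>-n$, contradicting the uniform pointwise bound.

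The main obstacle is the remaining endpoint $\alpha=-n$ with $p>1$: the ratio just computed is $\sim 1$, so testing against single reproducing kernels is inconclusive. To rule out $b^p_{-n}\subset h^\infty$ here one must instead exhibit an unbounded function in $b^p_{-n}$ by a finer mechanism---for instance a lacunary zonal harmonic series, or a harmonic analog of the unbounded Bloch-type function $\log(1/(1-\langle z,\zeta\rangle))$ familiar from the holomorphic ball, verified to lie in $b^p_{-n}$ via a Littlewood--Paley style computation. The construction for $1<p<\infty$ is carried out in \cite[Section~13.1]{GKU2}, and the parallel treatment of the whole theorem for $0<p<1$ (where this endpoint does not arise) appears in \cite[Theorem~5.1]{DOG}; no new ideas are needed beyond those already in the literature.
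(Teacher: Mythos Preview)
The paper does not prove Theorem~\ref{hinfinity2} at all; it merely states it with the sentence ``For a proof see Section 13.1 of \cite{GKU2} and discussion in there when $1\leq p< \infty$ and \cite[Theorem 5.1]{DOG} when $0< p< 1$.'' Your proposal therefore goes well beyond what the paper offers, and the parts you prove explicitly are correct.

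Your sufficiency argument is sound. For $\alpha<-n$ the chain through Theorem~\ref{Besov-Bloch}(ii) (at equality, giving $b^p_\alpha\subset b^\infty_{(\alpha+n)/p}$) and Theorem~\ref{hinfinity1}(i) (since $(\alpha+n)/p<0$) is clean. For the endpoint $\alpha=-n$, $0<p\le 1$, the reduction to $p=1$ via Theorem~\ref{Besov-Besov2} is legitimate (the hypothesis $0<p\le q$ is met with $q=1$), and the reproducing-kernel estimate from (\ref{QsIst=f1}) combined with $|R_s(x,y)|(1-|y|^2)^{n+s}\lesssim 1$ gives $|f(x)|\lesssim\int_\mathbb{B}|D^t_sf(y)|(1-|y|^2)^{t-n}\,d\nu(y)=\|f\|_{b^1_{-n}}$ exactly as you claim.

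Your necessity argument is also correct where you carry it out. The closed graph theorem applies (both $b^p_\alpha$ and $h^\infty$ embed continuously in $h(\mathbb{B})$, so the graph is closed; for $0<p<1$ the spaces are quasi-Banach, hence $F$-spaces, and the closed graph theorem still holds). The computation using Lemma~\ref{x-equal-y}, (\ref{Dst - Rs}), and Lemma~\ref{norm-kernel} giving the ratio $\sim(1-|y_0|^2)^{-(n+\alpha)/p}$ is exactly right and disposes of $\alpha>-n$. For the remaining endpoint $\alpha=-n$, $p>1$, you are honest that the single-kernel test is inconclusive and you defer to \cite[Section~13.1]{GKU2}---which is precisely the reference the paper itself cites for the entire theorem. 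So your proposal is complete in the same sense the paper's treatment is, while supplying genuine proofs for every other case.
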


\section{Proofs of Necessity Parts of Theorems \ref{Theorem-Boundedness of T,S 1.1}--\ref{Theorem-Boundedness of T,S 3.4}}\label{Section-Necessity Proofs}
In this section, we obtain necessary conditions for the boundedness of the o\-pe\-ra\-tor $T_{bc}$, that is, (i) implies (iii) in all of our seven theorems.
Before the necessity proofs, we first show the following lemma and corollary after that.
\begin{lemma}\label{Lemma-Kernel-comparasion}
Let $a,a_{1}, a_{2} \in\mathbb{R}$. Define $ \mathcal{R}_{a}(x,y)$ such that
\begin{equation*}
 \mathcal{R}_{a}(x,y)
:=\begin{cases}
1,&\text{if $\, a<-n$};\\
1+\log \dfrac{1}{[x,y]},&\text{if $\, a=-n$};\\
\dfrac{1}{[x,y]^{n+a}},&\text{if $\, a>-n$}.
\end{cases}
\end{equation*}
If $a_{1}< a_{2}$, then we have $ \mathcal{R}_{a_{1}}(x,y)\lesssim \mathcal{R}_{a_{2}}(x,y)$ for all $x,y \in \mathbb{B}$.
\end{lemma}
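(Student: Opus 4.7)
The plan is to verify the estimate case-by-case according to where $a_1$ and $a_2$ lie relative to the threshold $-n$. Since the definition of $\mathcal{R}_a$ splits into three regimes ($a<-n$, $a=-n$, $a>-n$) and we are given $a_1<a_2$, there are five possible combinations to handle: (i)~$a_1<a_2<-n$, (ii)~$a_1<-n=a_2$, (iii)~$a_1<-n<a_2$, (iv)~$-n=a_1<a_2$, and (v)~$-n<a_1<a_2$. The key quantitative input in every case is the uniform bound $0<[x,y]<2$ for $x,y\in\mathbb{B}$ noted just after the definition of $[x,y]$ in Section~\ref{section-Preliminaries}.

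For cases (i)--(iii) the left side equals the constant $1$, so I only need a uniform positive lower bound on $\mathcal{R}_{a_2}$. In case (i) this is immediate. In case (ii), $\mathcal{R}_{a_2}(x,y)=1+\log(1/[x,y])\ge 1-\log 2>0$, which gives the comparison. In case (iii), $\mathcal{R}_{a_2}(x,y)=[x,y]^{-(n+a_2)}\ge 2^{-(n+a_2)}>0$ since $n+a_2>0$, again giving the comparison. Case (v) is also essentially algebraic: the ratio $\mathcal{R}_{a_1}/\mathcal{R}_{a_2}=[x,y]^{a_2-a_1}$, and since $a_2-a_1>0$ and $[x,y]<2$, this ratio is bounded above by $2^{a_2-a_1}$.

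The only genuinely nontrivial step is case (iv), where I must show that
\begin{equation*}
1+\log\frac{1}{[x,y]}\;\lesssim\;\frac{1}{[x,y]^{n+a_2}},
\end{equation*}
with $n+a_2>0$. Away from the singularity $[x,y]\to 0$ both sides are bounded above and below by positive constants (using $[x,y]<2$ again), so it suffices to check the inequality as $[x,y]\to 0^+$. But this reduces to the elementary calculus fact that $1+\log(1/t)\le C_s\, t^{-s}$ for all $t\in(0,2)$ whenever $s>0$, which follows from $\lim_{t\to 0^+}(1+\log(1/t))\,t^{s}=0$ together with continuity on any compact subinterval of $(0,2]$.

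I expect the main (though still modest) obstacle to be the logarithmic case~(iv); once that elementary comparison is in hand, the remaining cases are immediate from the uniform bound $[x,y]<2$. Combining all five cases yields $\mathcal{R}_{a_1}(x,y)\lesssim\mathcal{R}_{a_2}(x,y)$ uniformly for $x,y\in\mathbb{B}$, as required.
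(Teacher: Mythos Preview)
Your proof is correct and follows essentially the same case-by-case approach as the paper: both enumerate the five possible positions of $a_1,a_2$ relative to $-n$, both rely on the uniform bound $0<[x,y]<2$, and both handle the only nontrivial case $-n=a_1<a_2$ via the elementary limit $\lim_{t\to 0^+}t^{s}\log(1/t)=0$ for $s>0$. The arguments are effectively identical.
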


\begin{proof}
There are five cases $a_{1}<a_{2}<-n$, $-n<a_{1}<a_{2}$, $a_{1}<-n<a_{2}$, $a_{1}<a_{2}=-n$ and $-n=a_{1}<a_{2}$ all of them can be elementary verified. If $a_{1}<a_{2}<-n$, it is clear that this estimate holds. If $-n<a_{1}<a_{2}$, we write
\begin{equation*}
\frac{1}{[x,y]^{n+a_{1}}}=\frac{[x,y]^{a_{2}-a_{1}}}{[x,y]^{n+a_{2}}}\lesssim \frac{1}{[x,y]^{n+a_{2}}}.
\end{equation*}
Now let $a_{1}<n<a_{2}$. Since $[x,y]\leq 2$ and $0<n+a_{2}$, its obvious that $\frac{1}{[x,y]}\geq \frac{1}{2}$ and thus we have
\begin{equation*}
1 \lesssim \frac{1}{[x,y]^{n+a_{2}}}.
\end{equation*}
If $a_{1}<a_{2}=-n$, then $1+\log\frac{1}{[x,y]}\geq 1+\log\frac{1}{2}\gtrsim 1$. Finally, let $-n=a_{1}<a_{2}$. Note that, $1+\log\frac{1}{[x,y]}$ and $\frac{1}{[x,y]^{n+a_{2}}}$ are bounded both above and below when $[x,y]$ away from zero. On the other side, $1+\log\frac{1}{[x,y]}$ is dominated by $\frac{1}{[x,y]^{n+a_{2}}}$ when $[x,y]$ near zero, because
\begin{equation*}
\lim_{t \to 0}\frac{\log\frac{1}{t}}{(\frac{1}{t})^{\delta}}=0,
\end{equation*}
for $\delta=n+a_{2}>0$ and $t=[x,y]$. Hence, if $a_{1}< a_{2}$ then $ \mathcal{R}_{a_{1}}(x,y)\lesssim \mathcal{R}_{a_{2}}(x,y)$ for all $x,y \in \mathbb{B}$.
\end{proof}

\begin{corollary}\label{Corollary-Lemma-Kernel-comparasion}
If $S_{bd}:L^{p}_{\alpha}\to L^{q}_{\beta}$ is bounded and $c<d$, then $S_{bc}:L^{p}_{\alpha}\to L^{q}_{\beta}$ is also bounded
\end{corollary}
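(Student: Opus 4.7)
The strategy is a direct pointwise comparison of kernels. Applying Lemma \ref{Lemma-Kernel-Estimate} with multi-index $|m|=0$ gives $|R_c(x,y)| \lesssim \mathcal{R}_c(x,y)$ for all $x,y\in\mathbb{B}$, while Lemma \ref{Lemma-Kernel-comparasion} combined with the hypothesis $c<d$ yields $\mathcal{R}_c(x,y) \lesssim \mathcal{R}_d(x,y)$. Chaining these produces the key pointwise bound
\[
|R_c(x,y)| \;\lesssim\; \mathcal{R}_d(x,y) \qquad (x,y\in\mathbb{B}).
\]
Hence for any non-negative measurable $f$,
\[
S_{bc} f(x) \;\lesssim\; \int_{\mathbb{B}} \mathcal{R}_d(x,y)\, f(y)\, (1-|y|^2)^b\, d\nu(y) \;=:\; \widetilde{S}_d\, f(x),
\]
and the task reduces to deducing boundedness of $\widetilde{S}_d:L^p_\alpha\to L^q_\beta$ from that of $S_{bd}$, since the absolute value in $S_{bc}$ already makes its kernel non-negative.

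This last step is where the bookkeeping must be done carefully. Lemma \ref{Lemma-Kernel-Estimate} also gives $|R_d(x,y)|\lesssim \mathcal{R}_d(x,y)$, so $S_{bd}$ is pointwise \emph{smaller} than $\widetilde{S}_d$, not larger. The equivalence is obtained instead at the level of parameter conditions: the Schur-test / Minkowski / H\"older arguments that drive the sufficiency halves of Theorems \ref{Theorem-Boundedness of T,S 1.1}--\ref{Theorem-Boundedness of T,S 3.4} systematically replace $|R_d|$ by its dominating term $\mathcal{R}_d$ and then invoke the Forelli--Rudin type bounds of Lemmas \ref{Integral-[x,y]} and \ref{norm-kernel}. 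Consequently, the same parameter conditions that make $S_{bd}$ bounded make $\widetilde{S}_d$ bounded as well, so the hypothesis on $S_{bd}$ transfers to $\widetilde{S}_d$, and the pointwise estimate above gives $S_{bc}:L^p_\alpha\to L^q_\beta$ bounded.

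The main obstacle is precisely this one-sided nature of the kernel estimate: the na\"ive pointwise inequality $|R_c|\lesssim |R_d|$ is \emph{not} available and indeed can fail wherever $R_d$ vanishes, so one cannot conclude directly that $S_{bc} f \lesssim S_{bd} f$ pointwise. Routing the argument through the dominating kernel $\widetilde{S}_d$ (which is universally comparable to both $S_{bc}$ and the operators appearing in the sufficiency proofs) is the natural way to handle this, and is the only delicate point in what is otherwise a two-line pointwise computation built on Lemma \ref{Lemma-Kernel-Estimate} and Lemma \ref{Lemma-Kernel-comparasion}.
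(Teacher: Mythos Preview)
You have put your finger on exactly the point that the paper glosses over: the paper's own proof is the single line ``$|R_c(x,y)|\lesssim |R_d(x,y)|$ by the previous lemma,'' and as you note this pointwise inequality is not what Lemma~\ref{Lemma-Kernel-comparasion} actually gives --- that lemma compares the dominating functions $\mathcal R_c$ and $\mathcal R_d$, not $|R_c|$ and $|R_d|$, and $R_d$ can vanish. So your instinct to route through $\widetilde S_d$ is sound.

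However, your proposed fix has a genuine circularity. You argue that boundedness of $S_{bd}$ forces the parameter conditions (necessity), and those in turn force boundedness of $\widetilde S_d$ (sufficiency, rewritten for the dominating kernel). But Corollary~\ref{Corollary-Lemma-Kernel-comparasion} is itself invoked \emph{inside} the sufficiency proofs of Theorems~\ref{Theorem-Boundedness of T,S 1.1}--\ref{Theorem-Boundedness of T,S 3.4} (each begins by reducing to the largest admissible $c$ ``by Corollary~\ref{Corollary-Lemma-Kernel-comparasion}''). So you cannot appeal to those theorems to prove the corollary without rearranging the paper's logical order.

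The clean resolution your analysis points to is this: what the sufficiency arguments actually establish, once one looks inside, is boundedness of the dominating operator $\widetilde S_d$ for the specific $d$ chosen (every estimate passes through $1/[x,y]^{n+d}$ and Lemma~\ref{Integral-[x,y]}). The statement that is both true and sufficient for the paper's needs is: \emph{if $\widetilde S_d$ is bounded and $c\le d$, then $S_{bc}$ is bounded}, which follows immediately from $|R_c|\lesssim\mathcal R_c\lesssim\mathcal R_d$ via Lemmas~\ref{Lemma-Kernel-Estimate} and~\ref{Lemma-Kernel-comparasion}. That is the honest content of the corollary; as literally stated (with hypothesis on $S_{bd}$ rather than $\widetilde S_d$) it does not admit the two-line proof the paper gives, nor a non-circular version of yours.
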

\begin{proof}
This is just because $|R_{c}(x,y)|\lesssim |R_{d}(x,y)|$ by the previous lemma.
\end{proof}

Firstly, we derive the first inequality in (iii) of each theorem. In this section, we do not need to assume $\beta>-1$ when $q<\infty$ or $\beta\geq 0$ when $q=\infty$ since the boundedness of $T_{bc}$ implies one of them by Corollary \ref{Remark-T,S with beta bigger than -1}. More precisely, the following theorem gives the first necessary conditon for all of Theorems \ref{Theorem-Boundedness of T,S 1.1}--\ref{Theorem-Boundedness of T,S 3.4}.

\begin{theorem}\label{the first necessary condition }
Let $b, c, \alpha, \beta \in \mathbb{R}$ and $1\leq p, q\leq\infty$. Suppose that $T_{bc}$ is bounded from $L^p_\alpha$ to $L^q_\beta$, then
$\alpha+1\leq p(b+1)$ for $1=p\leq q\leq\infty$, also $\alpha-1<b$ for $1\leq q <p=\infty$ and the strict inequality $\alpha+1<p(b+1)$ holds for the remainig cases.
\end{theorem}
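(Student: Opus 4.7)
The plan is to test $T_{bc}$ on the one-parameter family of non-negative functions $f_{uv}$ introduced earlier, and to combine Lemmas \ref{when fuv in Lpq} and \ref{when Tfuv is finite}. The key observation is the following: if $T_{bc}\colon L^p_\alpha\to L^q_\beta$ is bounded and $f\in L^p_\alpha$, then $T_{bc}f$ must be finite almost everywhere on $\mathbb{B}$. But Lemma \ref{when Tfuv is finite} is of dichotomy type: $T_{bc}f_{uv}$ is either a finite positive constant or identically $+\infty$ on $\{|x|\leq \epsilon\}$, a set of positive $\nu_\beta$-measure (resp.\ $\nu$-measure) for every $\beta\in\mathbb{R}$. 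Hence whenever $f_{uv}$ belongs to the domain space, the finiteness dichotomy forces $b+u>-1$, or $b+u=-1$ and $v>1$.

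Since the desired conclusion involves only $p,\alpha,b$, I would split according to $p$; the parameters $q,\beta,c$ play no role.

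\emph{Case $p=1$.} Fix $v=0$ and let $u\downarrow -1-\alpha$; by Lemma \ref{when fuv in Lpq} this keeps $f_{u,0}\in L^1_\alpha$, so the finiteness dichotomy forces $b+u>-1$ for all such $u$. Passing to the limit gives $-1-\alpha\geq -1-b$, i.e.\ $\alpha+1\leq p(b+1)$ when $p=1$.

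\emph{Case $1<p<\infty$.} Taking $v=0$ and $u\downarrow -(1+\alpha)/p$, the same limiting argument yields $\alpha+1\leq p(b+1)$. To upgrade this to strict inequality, suppose for contradiction that $\alpha+1=p(b+1)$, so that $-(1+\alpha)/p=-(1+b)$. Choose $u=-(1+b)$ and pick any $v\in(1/p,1]$. Then $\alpha+pu=-1$ and $pv>1$, so $f_{uv}\in L^p_\alpha$ by Lemma \ref{when fuv in Lpq}. On the other hand $b+u=-1$ and $v\leq 1$, so Lemma \ref{when Tfuv is finite} yields $T_{bc}f_{uv}\equiv +\infty$ on $\{|x|\leq\epsilon\}$, contradicting boundedness.

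\emph{Case $p=\infty$.} Using the $p=\infty$ part of Lemma \ref{when fuv in Lpq}, take $v=0$ and let $u\downarrow -\alpha$; the same limiting argument produces $\alpha\leq b+1$, i.e.\ $\alpha-1\leq b$. For strictness, suppose $\alpha=b+1$ and take $u=-\alpha$, $v=1$; since $u=-\alpha$ and $v\geq 0$, we have $f_{uv}\in \mathcal{L}^\infty_\alpha$, while $b+u=-1$ and $v=1\not>1$ give $T_{bc}f_{uv}=+\infty$ near the origin.

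The main obstacle is the borderline equality case. The key device that resolves it is the logarithmic factor built into $f_{uv}$: the mismatch between the threshold $pv>1$ governing membership of $f_{uv}$ in $L^p_\alpha$ on the critical line and the threshold $v>1$ governing finiteness of $T_{bc}f_{uv}$ creates precisely the non-empty window of admissible $v$ needed to produce the contradiction, and therefore the strict inequality.
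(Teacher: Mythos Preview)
Your proposal is correct and follows essentially the same strategy as the paper: test $T_{bc}$ on the functions $f_{uv}$ and exploit the gap between the membership criterion in Lemma~\ref{when fuv in Lpq} and the finiteness criterion in Lemma~\ref{when Tfuv is finite}. The only difference is that you take a two-step route (limiting argument for the non-strict inequality, then a contradiction on the critical line), whereas the paper goes directly to the critical line: for $1<p<\infty$ it simply takes $u=-(1+\alpha)/p$ and $v=1$, so that $\alpha+pu=-1$ and $pv=p>1$ put $f_{uv}\in L^p_\alpha$, while $v=1\not>1$ forces $b+u>-1$ outright; for $p=\infty$ it takes $u=-\alpha$, $v=0$ and obtains $b-\alpha>-1$ in one stroke. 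Your detour through the limit is harmless but unnecessary---the logarithmic window you identify is exactly what the paper uses, just invoked immediately rather than after a preliminary step.
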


\begin{proof}
The proof can be seperated in three cases depending on the value of $p$. We first show the case $1<p<\infty$. Consider $f_{uv}$ with $u=-(1+\alpha)/p$ and $v=1$ so that $f_{uv}\in L^p_\alpha$ by Lemma \ref{when fuv in Lpq}. Then its clear that $T_{bc}f_{uv}\in L^q_\beta$ and Lemma \ref{when Tfuv is finite} implies $b+u>-1$. This yields $(1+\alpha)/p<1+b$ with the value of $u$ chosen.

We now show the second case $p=1$. Consider $f_{uv}$ with $u>-(1+\alpha)$ and $v=0$ so that $f_{u0}\in L^1_\alpha$ by Lemma \ref{when fuv in Lpq}. Then $T_{bc}f_{u0}\in L^q_\beta$ and Lemma \ref{when Tfuv is finite} implies $b+u>-1$. Writing $u=-1-\alpha+\varepsilon$ with $\varepsilon>0$, we obtain $\alpha<b+\varepsilon$. This is just $\alpha\leq b$.

The last case is $p=\infty$. Let now  $f_{uv}$ with $u=-\alpha$  so that $f_{u0}\in \mathcal{L}^\infty_\alpha$ by Lemma \ref{when fuv in Lpq}. Then $T_{bc}f_{u0}\in L^q_\beta$ and Lemma \ref{when Tfuv is finite} implies $b-\alpha>-1$.
\end{proof}

We next derive the second inequality in (iii) of each theorem. As indicated before, we do this by an original method depends on the inclusion relations between Bergman-Besov and weighted Bloch spaces appears in \cite{KU1}. A key step of this method is  Lemma \ref{Composition of T, Dst}.

\begin{theorem}\label{the second necessary condition }
Let $b, c, \alpha, \beta \in \mathbb{R}$ and $1\leq p, q\leq\infty$. Suppose that $T_{bc}$ is bounded from $L^p_\alpha$ to $L^q_\beta$. Then
 $c\leq b+\frac{n+\beta}{q}-\frac{n+\alpha}{p}$ for $1\leq p\leq q<\infty$, $c< b+\frac{1+\beta}{q}-\frac{1+\alpha}{p}$ for $1\leq q<p<\infty$, $c\leq b+\beta-\frac{n+\alpha}{p}$ for $1\leq p<q=\infty$ and the strict inequality holds for $p\neq1$ when $\beta=0$, also  $c< b+\frac{1+\beta}{q}-\alpha$  for $1 \leq q< p=\infty$, $c\leq b+\beta-\alpha$ for $p=q=\infty$ and the strict inequality holds  when $\beta=0$.
\end{theorem}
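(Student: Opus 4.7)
The plan is to reduce the question to inclusion relations between harmonic Bergman--Besov (and weighted Bloch) spaces, via the composition identity in Lemma~\ref{Composition of T, Dst}: $T_{bc} I^t_b h = C D^{c-b}_b h$ for $h\in b^1_b$ when $b+t>-1$. First I would fix $t$ large enough to satisfy simultaneously $b+t>-1$ and the parameter condition ($\alpha+pt>-1$ when $p<\infty$, or $\alpha+t>0$ when $p=\infty$) that makes $I^t_b$ an isometric embedding of $b^p_\alpha$ into $L^p_\alpha$ (respectively $b^\infty_\alpha$ into $\mathcal{L}^\infty_\alpha$). For any $h\in b^p_\alpha\cap b^1_b$ (or $b^\infty_\alpha\cap b^1_b$), the boundedness of $T_{bc}$ combined with the identity above then puts $D^{c-b}_b h$ in $L^q_\beta$ (or $\mathcal{L}^\infty_\beta$); since $D^{c-b}_b h$ is harmonic, Corollary~\ref{Remark-T,S with beta bigger than -1} promotes this to $D^{c-b}_b h\in b^q_\beta$ (respectively $b^\infty_\beta$). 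After extending from this rich subspace to all of $b^p_\alpha$, this gives a bounded map $D^{c-b}_b : b^p_\alpha \to b^q_\beta$. Combined with the isomorphism $D^{c-b}_b : b^p_\alpha \to b^p_{\alpha+p(c-b)}$ from Lemma~\ref{Apply-Dst}, we obtain the continuous inclusion
\[
b^p_{\alpha+p(c-b)} \subset b^q_\beta,
\]
with the obvious analogues $b^\infty_{\alpha+c-b}\subset b^q_\beta$, $b^p_{\alpha+p(c-b)}\subset b^\infty_\beta$, and $b^\infty_{\alpha+c-b}\subset b^\infty_\beta$ in the other cases.

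The seven cases of the theorem are then read off by feeding this inclusion into the appropriate sharp inclusion result. For $1\leq p\leq q<\infty$ with $\beta>-1$, Theorem~\ref{Besov-Besov2} yields the non-strict inequality $c\leq b+(n+\beta)/q-(n+\alpha)/p$; for $1\leq q<p<\infty$, Theorem~\ref{Besov-Besov1} yields the strict form with $n$ replaced by $1$ in the numerator, reflecting the different shape of that inclusion criterion. The $q=\infty$ cases with $p<\infty$ are handled by Theorem~\ref{Besov-Bloch}(ii), the $p=\infty,q<\infty$ case by Theorem~\ref{Besov-Bloch}(i) in its strict form, and the $p=q=\infty$ case by the trivial Bloch--Bloch inclusion~\eqref{Inclusion}. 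The strict refinement ``when $\beta=0$'' in the $q=\infty$ cases stems from the requirement $\beta+t>0$ (strict) in the Bloch definition, which ultimately costs one extra unit of room in the final inequality on $c$.

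The main obstacle is the extension from $b^p_\alpha\cap b^1_b$ to all of $b^p_\alpha$. Two natural routes present themselves: density of harmonic polynomials, which lie in every $b^1_b$ and can be shown dense in $b^p_\alpha$ for $1\leq p<\infty$; or dilation $h_r(x)=h(rx)$, which pushes any $h\in h(\mathbb{B})$ into every $b^1_s$ and converges to $h$ in the appropriate norm. A cleaner alternative that bypasses approximation entirely is the contrapositive approach: if the $c$-inequality fails, exhibit an explicit $h$ lying simultaneously in $b^p_\alpha$ (or $b^\infty_\alpha$) and in $b^1_b$ but with $D^{c-b}_b h\notin b^q_\beta$ (or $b^\infty_\beta$). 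Reproducing-kernel slices $R_s(\cdot,y_0)$ with $|y_0|\to 1$, or tailored zonal-harmonic series saturating the sharpness of the inclusion theorems, are natural candidates; verifying their membership simultaneously in the source space and in $b^1_b$, together with carefully tracking the strict/non-strict nature of each inequality across all seven cases, is the main technical burden.
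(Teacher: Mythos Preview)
Your overall strategy matches the paper's, but the ``main obstacle'' you identify dissolves once you invoke Theorem~\ref{the first necessary condition } (the first necessary condition, already established). For $1<p<\infty$ that theorem gives $\alpha+1<p(b+1)$, i.e.\ $(1+\alpha)/p<1+b$, and Theorem~\ref{Besov-Besov1} with target exponent $1$ then yields $b^p_\alpha\subset b^1_b$ outright. For $p=1$ it gives $\alpha\leq b$, and \eqref{Inclusion} gives $b^1_\alpha\subset b^1_b$. For $p=\infty$ it gives $\alpha-1<b$, and Theorem~\ref{Besov-Bloch}(i) gives $b^\infty_\alpha\subset b^1_b$. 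Hence $b^p_\alpha\cap b^1_b=b^p_\alpha$ in every case, and no approximation, dilation, or contrapositive construction is needed. The same first condition also forces $b+t>-1$ automatically once $t$ is chosen with $\alpha+pt>-1$ (or $\alpha+t>0$), so there is no need to choose $t$ ``large enough'' for both constraints independently.

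Your account of the strict inequality when $q=\infty$ and $\beta=0$ is also incorrect. It is not the Bloch parameter condition $\beta+t>0$ that forces strictness; rather, when $\beta=0$ the range of $T_{bc}$ lies in $\mathcal{L}^\infty_0\cap h(\mathbb{B})=h^\infty$, which is strictly smaller than $b^\infty_0$. The relevant inclusion is therefore $b^p_{\alpha+p(c-b)}\subset h^\infty$ (or $b^\infty_{\alpha+c-b}\subset h^\infty$), governed by Theorem~\ref{hinfinity2} for $p<\infty$ and Theorem~\ref{hinfinity1} for $p=\infty$; these characterizations carry strict inequalities in the parameter, yielding $c<b-(n+\alpha)/p$ for $1<p<\infty$ and $c<b-\alpha$ for $p=\infty$. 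Without this step your use of \eqref{Inclusion} for $p=q=\infty$ would give only the non-strict bound even when $\beta=0$.
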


\begin{proof}
First note that, the boundedness of $T_{bc}$ implies $\beta>-1$ when $q<\infty$ and $\beta\geq 0$ when $q=\infty$ by Corollary \ref{Remark-T,S with beta bigger than -1}.

Now we handle the all cases in four groups. The first group again consist of the cases $1\leq p\leq q<\infty$ and $1\leq q< p<\infty$.
Let $h \in b^p_\alpha$. In order to able to use Lemma \ref{Composition of T, Dst}, we need to show that $h\in b^1_b$. In the cases $1< p\leq q<\infty$ and $1\leq q<p<\infty$, we have $(1+\alpha)/p<1+b$ by the first necessary condition, and then Theorem \ref{Besov-Besov1} gives $h\in b^1_b$. In the case $1=p\leq q<\infty$, we have $\alpha\leq b$ again by the first necessary condition, and then (\ref{Inclusion}) shows $h\in b^1_b$. Pick $t$ such that $\alpha+pt>-1$. Together with the first necessary condition, it is easy to check that $b+t>-1$. We will consider the composition of the bounded maps
 \begin{equation*}
 b^p_\alpha \stackrel{\mathrm{I_{b}^{t}}}{\xrightarrow{\hspace*{1cm}}} L^p_\alpha \stackrel{\mathrm{T_{bc}}}{\xrightarrow{\hspace*{1cm}}}  b^q_\beta
 \stackrel{\mathrm{D_{c}^{b-c}}}{\xrightarrow{\hspace*{1cm}}} b^q_{\beta+q(b-c)}.
\end{equation*}
Note that since $T_{bc}h$ is harmonic and $\beta>-1$ range of $T_{bc}$ is $b^q_\beta \subset L^q_\beta $. Lemma \ref{Composition of T, Dst} yields that $b^p_\alpha$ is imbedded in $b^q_{\beta+q(b-c)}$ by the inclusion map. But by Theorem \ref{Besov-Besov2} this is possible only if $(\alpha+n)/p\leq (\beta+q(b-c)+n)/q$ which is equivalent to $c\leq b+\frac{n+\beta}{q}-\frac{n+\alpha}{p}$ in the case $1\leq p\leq q<\infty$. Similarly, by Theorem \ref{Besov-Besov1} this is possible only if $(\alpha+1)/p\leq (\beta+q(b-c)+1)/q$ that is  $c< b+\frac{1+\beta}{q}-\frac{1+\alpha}{p}$ in the case $1\leq q< p<\infty$.

 The second group  consist of the case $1\leq p< q=\infty$.
Let $ H\in b^p_{\alpha+p(c-b)}$. By Lemma \ref{Apply-Dst} (i), $D_{c}^{b-c}H=h\in b^p_\alpha$. Exactly as in the proof of the first group of cases,    $h\in b^1_b$. Pick $t$ such that $\alpha+pt>-1$ and this gives $b+t>-1$ with the first necessary condition.  We will consider the composition of the bounded maps
 \begin{equation*}
b^p_{\alpha+p(c-b)} \stackrel{\mathrm{D_{c}^{b-c}}}{\xrightarrow{\hspace*{1cm}}} b^p_\alpha \stackrel{\mathrm{I_{b}^{t}}}{\xrightarrow{\hspace*{1cm}}}  L^p_\alpha
 \stackrel{\mathrm{T_{bc}}}{\xrightarrow{\hspace*{1cm}}} b^\infty_{\beta}.
\end{equation*}
Note that since $T_{bc}h$ is harmonic, range of $T_{bc}$ is $b^\infty_\beta \subset \mathcal{L}^\infty_\beta $ when $\beta>0$ and $\mathcal{L}^\infty\cap h(\mathbb{B})=h^{\infty} \subset L^\infty $ when $\beta=0$. Lemma \ref{Composition of T, Dst} yields that $b^p_{\alpha+p(c-b)}$ is imbedded in $b^\infty_{\beta}$ when $\beta>0$ and in $h^\infty$ when $\beta=0$ by the inclusion map. By Theorem \ref{Besov-Bloch} (ii) this is possible only if  $c\leq b+\beta-\frac{n+\alpha}{p}$ when $\beta>0$. Similarly, by Theorem \ref{hinfinity2}  this is possible only if  $c< b-\frac{n+\alpha}{p}$ in the case $1< p< q=\infty$ and it is possible only if  $c\leq b-(n+\alpha)$ in the case $1= p, q=\infty$   when $\beta=0$.

 The third group  consist of the case $1\leq q< p=\infty$.
Let $ h\in b^\infty_{\alpha}$. The first necessary condition gives $\alpha-1<b$ and then Theorem \ref{Besov-Bloch}(i) yields  $h\in b^1_b$. Pick $t$ such that $\alpha+t>0$ and this gives $b+t>-1$ with the first necessary condition.  We will consider the composition of the bounded maps
 \begin{equation*}
b^\infty_{\alpha} \stackrel{\mathrm{I_{b}^{t}}}{\xrightarrow{\hspace*{1cm}}} \mathcal{L}^\infty_\alpha \stackrel{\mathrm{T_{bc}}}{\xrightarrow{\hspace*{1cm}}}  b^q_\beta
 \stackrel{\mathrm{D_{c}^{b-c}}}{\xrightarrow{\hspace*{1cm}}} b^q_{\beta+q(b-c)}.
\end{equation*}
Note that since $T_{bc}h$ is harmonic and $\beta>-1$, range of $T_{bc}$ is $ b^q_\beta\subset  L^q_{\beta}$.  Lemma \ref{Composition of T, Dst} yields that $b^\infty_{\alpha}$ is imbedded in $b^q_{\beta+q(b-c)}$  by the inclusion map. By Theorem \ref{Besov-Bloch} (i) this is possible only if  $c< b+\frac{1+\beta}{q}-\alpha$.

The last group  consist of the case $ p= q=\infty$.
Let $ H\in b^\infty_{\alpha+(c-b)}$. By Lemma \ref{Apply-Dst} (ii), $D_{c}^{b-c}H=h\in b^\infty_\alpha$. Exactly as in the proof of the third group of cases, $h\in b^1_b$. Pick $t$ such that $\alpha+t>0$ and this gives $b+t>-1$ with the first necessary condition.  We will consider the composition of the bounded maps
 \begin{equation*}
b^\infty_{\alpha+(c-b)} \stackrel{\mathrm{D_{c}^{b-c}}}{\xrightarrow{\hspace*{1cm}}} b^\infty_\alpha \stackrel{\mathrm{I_{b}^{t}}}{\xrightarrow{\hspace*{1cm}}}  \mathcal{L}^\infty_\alpha
 \stackrel{\mathrm{T_{bc}}}{\xrightarrow{\hspace*{1cm}}} b^\infty_{\beta}.
\end{equation*}
Note that since $T_{bc}h$ is harmonic, range of $T_{bc}$ is $b^\infty_\beta \subset \mathcal{L}^\infty_\beta $ when $\beta>0$ and $\mathcal{L}^\infty\cap h(\mathbb{B})=h^{\infty} \subset L^\infty $ when $\beta=0$. Lemma \ref{Composition of T, Dst} yields that $b^\infty_{\alpha+(c-b)}$ is imbedded in $b^\infty_{\beta}$ when $\beta>0$ and in $h^\infty$ when $\beta=0$ by the inclusion map. By (\ref{Inclusion}) this is possible only if  $c\leq b+\beta-\alpha$ when $\beta>0$. Similarly, by Theorem \ref{hinfinity1} this is possible only if  $c< b-\alpha$ in when $\beta=0$.
\end{proof}

Finaly, we must prove that in the case $1=p\leq q \leq \infty$, if one of the inequalities in (iii) of Theorems \ref{Theorem-Boundedness of T,S 1.2} and \ref{Theorem-Boundedness of T,S 3.2} is an equality, then the other must be a strict inequality. Our method of proof will be an adaptation of the reasoning used in Theorem 6.3 of \cite{KU1}.

\begin{theorem}\label{the strict inequality condition}
Let $b, c, \alpha, \beta \in \mathbb{R}$ and $1\leq p, q\leq\infty$. Suppose that $T_{bc}$ is bounded from $L^p_\alpha$ to $L^q_\beta$. Then
equality cannot hold simultaneously in the inequalities  of Theorems \ref{Theorem-Boundedness of T,S 1.2} and \ref{Theorem-Boundedness of T,S 3.2}.
\end{theorem}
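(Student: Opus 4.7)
The plan is to argue by contradiction using the adjoint operator. Suppose $T_{bc}$ is bounded from $L^1_\alpha$ to $L^q_\beta$ (or to $\mathcal{L}^\infty_\beta$ when $q = \infty$) and that both inequalities in part (iii) of the relevant theorem are equalities: $\alpha = b$ together with $c = b + \frac{n+\beta}{q} - (n+\alpha)$ when $q < \infty$, or $c = b + \beta - (n+\alpha)$ when $q = \infty$. I want to leverage the fact that the second necessary condition, derived in Theorem \ref{the second necessary condition }, carries automatic strict-inequality clauses when the target space has zero weight; the job is to transform the problem into one of that form.

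The key observation is that Proposition \ref{Adjoint of T} gives the adjoint as $T^*_{bc}g(x) = (1-|x|^2)^{b-\alpha} T_{\beta c}g(x)$, and under the corner hypothesis $\alpha = b$ the weight factor vanishes and $T^*_{bc}$ collapses to $T_{\beta c}$ itself. Since $L^\infty_\alpha = L^\infty = \mathcal{L}^\infty_0$, standard $L^p$--$L^{p'}$ duality then yields that $T_{\beta c}\colon L^{q'}_\beta \to \mathcal{L}^\infty_0$ is bounded. Crucially, the target weight is now zero, so Theorem \ref{the second necessary condition } applied to this new operator (as the necessity of Theorem \ref{Theorem-Boundedness of T,S 3.1} when $1 < q' < \infty$, or of Theorem \ref{Theorem-Boundedness of T,S 3.4} when $q' = \infty$) forces the strict version of the second inequality: $c < \beta - \frac{n+\beta}{q'}$ in the former case, or $c < \beta$ in the latter. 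A routine algebraic check shows $\beta - \frac{n+\beta}{q'} = \frac{n+\beta}{q} - n$, so the strict inequality just obtained for the adjoint directly contradicts the assumed corner equality for $T_{bc}$. The cases $1 < q < \infty$ and $q = 1$ are handled this way in a uniform manner.

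For the $q = \infty$ case of Theorem \ref{Theorem-Boundedness of T,S 3.2}, standard Banach-space duality does not cleanly apply because $\mathcal{L}^\infty_\beta$ is not a standard dual; I would replace the adjoint computation by a Fubini-pairing argument. For arbitrary $g \in L^1$ the functional $f \mapsto \int (1-|x|^2)^\beta T_{bc}f(x) g(x)\,d\nu(x)$ is bounded by the operator norm times $\|g\|_{L^1}$, and swapping the order of integration rewrites this integral as $\int f(y) (1-|y|^2)^b T_{\beta c}g(y)\,d\nu(y)$. Taking supremum over unit-norm $f \in L^1_\alpha$ and using $\alpha = b$ yields $\|T_{\beta c}g\|_{L^\infty} \lesssim \|g\|_{L^1}$, i.e.\ $T_{\beta c}\colon L^1 \to L^\infty$ is bounded. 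Applying necessity (again noting the target weight is zero) produces the strict $c$-inequality that contradicts the corner assumption.

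The main obstacle I anticipate is the $q = \infty$ subcase, where the need to bypass the dual of $\mathcal{L}^\infty_\beta$ forces the Fubini-pairing detour and requires careful handling of the weights on both sides so that the resulting operator $T_{\beta c}$ really lands in a zero-weight target. Beyond that, the argument is largely bookkeeping: once the adjoint (or pairing) identification places the problem in the ``$\beta_{\text{target}} = 0$'' regime of Theorems \ref{Theorem-Boundedness of T,S 3.1} and \ref{Theorem-Boundedness of T,S 3.4}, the automatic strictness of their $c$-conditions does all the work. The identity $\beta - \frac{n+\beta}{q'} = \frac{n+\beta}{q} - n$ that ties the adjoint's strict inequality back to the corner value is the unifying algebraic thread across all subcases.
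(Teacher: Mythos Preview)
Your duality reduction for $1\le q<\infty$ is correct and is a genuinely different, cleaner route than the paper's. The paper also passes to the adjoint $T_{\beta c}\colon L^{q'}_\beta\to L^\infty$ (using $\alpha=b$ to kill the weight factor), but then manufactures a contradiction by hand: for $q=1$ it tests against the uniformly bounded family $f_x(y)=|R_\beta(x,y)|/R_\beta(x,y)$ and invokes Lemma~\ref{norm-kernel}, while for $1<q<\infty$ it picks an unbounded $g\in b^{q'}_{-n}$ (existence from Theorem~\ref{hinfinity2}) and checks $T_{\beta c}h=V_\beta g\notin L^\infty$. Your argument replaces both constructions by a single appeal to the $\beta=0$ strictness clause of Theorem~\ref{the second necessary condition } applied to the adjoint, and the identity $\beta-\tfrac{n+\beta}{q'}=\tfrac{n+\beta}{q}-n$ closes the loop exactly as you say. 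This is shorter and more conceptual; the paper's version is more self-contained.

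The $q=\infty$ case, however, has a real gap. After your Fubini pairing you obtain that $T_{\beta c}\colon L^1_0\to\mathcal{L}^\infty_0$ is bounded, and you then claim that the zero target weight forces the strict $c$-inequality. But read Theorem~\ref{the second necessary condition } carefully: in the regime $1\le p<q=\infty$ the strictness clause at $\beta=0$ is stated \emph{only for $p\neq 1$}, and your reduced operator has domain exponent exactly $p=1$. So Theorem~\ref{the second necessary condition } yields only $c\le\beta-n$, which is the corner value itself and gives no contradiction. Appealing instead to part~(iii) of Theorem~\ref{Theorem-Boundedness of T,S 3.2} for the reduced operator would be circular, since the present theorem is precisely what upgrades (iii) to its either--or form. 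In short, your adjoint idea sends the $(p,q)=(1,\infty)$ corner problem to another $(p,q)=(1,\infty)$ corner problem, and no already-proved necessity statement separates the two. The paper avoids duality entirely here: it tests $T_{bc}$ directly against the normalized characteristic functions
\[
f_i(x)=\frac{V_\alpha\,\chi_{E_i}(x)}{\nu(E_i)(1-|x|^2)^{\alpha}},\qquad E_i=B\bigl(x_i,\tfrac{1}{2i}\bigr),\quad x_i=\bigl(1-\tfrac1i,0,\dots,0\bigr),
\]
uses the mean value property to compute $T_{bc}f_i(y)=V_\alpha R_{\beta-n}(y,x_i)$, and reads off the contradiction from the diagonal growth in Lemma~\ref{x-equal-y}.
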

\begin{proof}
First note that, the boundedness of $T_{bc}$ implies $\beta>-1$ when $\infty<q$ and $\beta\geq 0$ when $q=\infty$ by Corollary \ref{Remark-T,S with beta bigger than -1}.

If $\alpha=b$ and $c=b+\beta-\alpha$ simultaneously in the case $1=p=q$, then also $c=\beta>-1$ and $T^{*}_{bc}=T_{\beta\beta}:L^{\infty}\to L^{\infty}$ is bounded. Let
\begin{equation*}
f_{x}(y)=\begin{cases}\frac{|R_{\beta}(x,y)|}{R_{\beta}(x,y)},&\text{if $R_{\beta}(x,y)\neq 0$};\\
 1,&\text{if $\, R_{\beta}(x,y)=0$},
\end{cases}
\end{equation*}
which is a uniformly bounded family for $x\in \mathbb{B}$. The same is true also of $\{T_{\beta\beta}f_{x}\}$. But
\begin{equation*}
T_{\beta\beta}f_{x}(x)=\int_{\mathbb{B}} |R_{\beta}(x,y)|  (1-|y|^2)^{\beta} d\nu(y)\sim
1+\log\dfrac1{1-|x|^2}
\end{equation*}
by  Lemma \ref{norm-kernel}, this contradicts to the uniform boundedness.

If $\alpha=b$ and $c=b+(n+\beta)/q-(n+\alpha)$ simultaneously in the case $1=p<q<\infty$, then also $c=(n+\beta)/q-n$ and $T^{*}_{bc}=T_{\beta c}:L^{q'}_{\beta}\to L^{\infty}$ is bounded with $q'>1$.  By Theorem \ref{hinfinity2}, there is an unbounded $g\in b^{q'}_{-n}$. Then $h=D^{(n+\beta)/q'}_{\beta-(n+\beta)/q'}g \in b^{q'}_{\beta} \subset L^{q'}_{\beta} \subset L^{1}_{\beta}$. By Lemma \ref{Lemma-T act as Dst} and (\ref{Additive-Dst}), we obtain
\begin{equation*}
T^{*}_{bc}h=T_{\beta c}h=V_{\beta}D^{c-\beta}_{\beta}h=V_{\beta}D^{c-\beta}_{\beta}D^{(n+\beta)/q'}_{\beta-(n+\beta)/q'}g=V_{\beta}g.
\end{equation*}
Nevertheless $g \notin L^{\infty}$, and this contradicts that $T^{*}_{bc}:L^{q'}_{\beta}\to L^{\infty}$.

If $\alpha=b$ and $c=b+\beta-(\alpha+n)$ simultaneously in the case $1=p, q=\infty$, then also $c=\beta-n$. For $i=1,2,...$, let $x_{i}=(1-1/i,0,...,0)$ and $E_{i}$ the ball of radius $1/2i$ centered at $x_{i}$, and define
\begin{equation*}
f_{i}(x)=\frac{V_{\alpha}\, \chi_{E_{i}}(x)}{\nu(E_{i})(1-|x|^{2})^{\alpha}}.
\end{equation*}
Clearly $f_{i}\in L^{1}_{\alpha}$ and $\|f_{i}\|_{L^{1}_{\alpha}}=1$ for every $i$. Then $\{T_{bc}f_{i}\}=\{T_{\alpha,\beta-n}f_{i}\}$ is a uniformly bounded family. By the mean value property,
\begin{equation*}
T_{\beta,\beta-n}f_{i}(y)=\frac{V_{\alpha}}{\nu(E_{i})}\int_{E_{i}} R_{\beta-n}(x,y)d\nu(x)=V_{\alpha}R_{\beta-n}(y,x_{i}).
\end{equation*}
But
\begin{equation*}
T_{\beta,\beta-n}f_{i}(x_{i})=V_{\alpha}R_{\beta-n}(x_{i},x_{i})\sim
\begin{cases}
\dfrac{1}{(1-|x_{i}|^2)^{\beta}},&\text{if $\, \beta>0$};\\
1+\log \dfrac{1}{1-|x_{i}|^2},&\text{if $\, \beta=0$};
\end{cases}
\end{equation*}
by  Lemma \ref{x-equal-y}, this contradicts to the uniform boundedness.
\end{proof}

\section{Proofs of Sufficiency Parts of Theorems \ref{Theorem-Boundedness of T,S 1.1}--\ref{Theorem-Boundedness of T,S 3.4}}\label{Section-Sufficiency Proofs}
In this section we will present the proofs that the inequalities in (iii) of Theorems \ref{Theorem-Boundedness of T,S 1.1}--\ref{Theorem-Boundedness of T,S 3.4} imply the boundedness of $S_{bc}$. By Corollary \ref{Corollary-Lemma-Kernel-comparasion}, it is enough to prove this only for large values of $c$. In all theorems except Theorem \ref{Theorem-Boundedness of T,S 3.2}, there are values of $c>-n$ satisfying the inequalities in (iii), thus  we make this the standing assumption in this section. For Theorem \ref{Theorem-Boundedness of T,S 3.2}, we deal with  the values of $c$ separately.

 We consider each theorem separately since each of the cases has a sufficiently different proof those from the others. Throughout this section, we assume that the two inequalities in (iii) hold.

 The following sufficiency proof of Theorem \ref{Theorem-Boundedness of T,S 1.1} follows the same lines as the proofs of \cite[Lemma 6]{Z}.

\begin{proof}[Proof of sufficiency for Theorem \ref{Theorem-Boundedness of T,S 1.1}]\label{Sufficiency Proofs 1.1}
First, taking $c$ to have its largest value
\begin{equation}\label{largest value of c}
c=b+\frac{n+\beta}{q}-\frac{n+\alpha}{p}
\end{equation}
causes no loss of generality by Corollary \ref{Corollary-Lemma-Kernel-comparasion}. We have $c>-n$ by the condition $\alpha+1<p(b+1)$ and $\beta>-1$.
We employ the Schur test in Theorem \ref{Schur test 1} with the measures $\mu=\nu_{\alpha}, \upsilon=\nu_{\beta}$ and the kernel $K(x,y)=\frac{(1-|x|^2)^{b-\alpha}}{[x,y]^{n+c}}$ which together give us  us the desired boundedness of the operator $S_{bc}$. Thus we need to find two positive constant $\gamma$ and $\delta$ such that $\gamma+\delta=1$ and two strictly positive functions $\phi(x)=(1-|x|^2)^{A}$ and $\psi(y)=(1-|y|^2)^{B}$ on $\mathbb{B}$ with $A,B\in\mathbb{R}$ to be determined. The two inequalities that need to be satisfied for the Schur test are
\begin{align*}
&\int_{\mathbb{B}} \frac{(1-|x|^2)^{(b-\alpha)\gamma p'}}{[x,y]^{(n+c)\gamma p'}}(1-|x|^2)^{Ap'}(1-|x|^2)^{\alpha}d\nu(x)\lesssim (1-|y|^2)^{Bp'},\\
&\int_{\mathbb{B}} \frac{(1-|x|^2)^{(b-\alpha)\delta q}}{[x,y]^{(n+c)\delta q}}(1-|y|^2)^{Bq}(1-|y|^2)^{\beta}d\nu(y)\lesssim (1-|x|^2)^{Aq}.
\end{align*}
One way to satisfy them is by matching the growth rates of their two sides, that is, the powers of the $1-|\cdot|^{2}$. This is possible if $A,B<0$ and
\begin{equation}\label{possible from Lemma Integral-[x,y]}
\begin{split}
&-Bp'=(n+c)\gamma p'-n-(b-\alpha)\gamma p'-Ap'-\alpha,\\
&-Aq=(n+c)\delta q-n-Bq-\beta-(b-\alpha)\delta q,
\end{split}
\end{equation}
by Lemma \ref{Integral-[x,y]}. But we must also make sure that the conditions of Lemma \ref{Integral-[x,y]} for this to happen are met, that is,
\begin{equation}\label{conditions of Lemma Integral-[x,y]}
\begin{split}
F_{1}:=(b-\alpha)\gamma p'+Ap'+\alpha&>-1,\\
Bq+\beta&>-1,\\
F_{2}:=(n+c)\gamma p'-n-(b-\alpha)\gamma p'-Ap'-\alpha&>0,\\
(n+c)\delta q-n-Bq-\beta&>0.
\end{split}
\end{equation}
Keep in mind that the two equations  in (\ref{possible from Lemma Integral-[x,y]}) are linearly dependent. The variables $A,B$ and $\delta$ are determined in the following way. We first choose an $B$ to satisfy the second inequality in (\ref{conditions of Lemma Integral-[x,y]}); so
\begin{equation}\label{ value of B}
-\frac{1+\beta}{q}<B<0.
\end{equation}
This is possible since $\beta>-1$. Next we pick a $\delta$ to satisfy the fourth inequality in (\ref{conditions of Lemma Integral-[x,y]}) and naturally let $\gamma=1-\delta$; so we take
\begin{equation}\label{ value of gamma-delta}
\begin{split}
\delta&=\frac{1}{n+c}(B+\frac{n+\beta}{q}+\varepsilon),\\
\gamma&=\frac{1}{n+c}(-B+n+b-\frac{n+\alpha}{p}-\varepsilon)
\end{split}
\end{equation}
with $\varepsilon>0$ by (\ref{largest value of c}). Using the chosen values of $B,\gamma$ and $\delta$, we then solve for $A$ from, say, the second equation in (\ref{possible from Lemma Integral-[x,y]}), and simplify it using the definition of $\delta$; so
\begin{equation}\label{value of A}
\begin{split}
A&=(b-\alpha)\delta-(n+c)\delta+B+ \frac{n+\beta}{q}\\
&=(b-\alpha)\delta-B-\frac{n+\beta}{q}-\varepsilon+B+  \frac{n+\beta}{q}=(b-\alpha)\delta-\varepsilon.
\end{split}
\end{equation}
Finally, we must check that the remaining first and third inequalities in (\ref{conditions of Lemma Integral-[x,y]}) hold for some $\varepsilon>0$. Substituting in the value of $A$ from (\ref{value of A}), since $\gamma+\delta=1$,
\begin{align*}
F_{1}+1&=(b-\alpha)\gamma p'+(b-\alpha)\delta p'-\varepsilon p'+\alpha+1=(b-\alpha) p'+\alpha+1-\varepsilon p'\\
&=p'(b-\alpha+\frac{\alpha+1}{p'})-\varepsilon p'=p'(b+1-\frac{\alpha+1}{p})-\varepsilon p'\\
&=p'(b+1+\frac{\alpha+1}{p}-\varepsilon)>0
\end{align*}
by the condition $\alpha+1<p(b+1)$ provided $\varepsilon<b+1-\frac{\alpha+1}{p}$. Substituting in for $A$ and $\gamma$  from (\ref{ value of gamma-delta}) and (\ref{value of A}), again since $\gamma+\delta=1$,
\begin{align*}
F_{2}&=p'\left((n+c)\gamma-(b-\alpha)\gamma-(b-\alpha)\delta+\varepsilon-\frac{n+\alpha}{p'}\right)\\
&=p'\left(-B+n+b-\frac{n+\alpha}{p}-\varepsilon-b+\alpha+\varepsilon-\frac{n+\alpha}{p'}\right)\\
&=-p'B>0
\end{align*}
by (\ref{ value of B}). Hence for
\begin{equation*}
0<\varepsilon<b+1-\frac{\alpha+1}{p},
\end{equation*}
Theorem \ref{Schur test 1} using the chosen functions $\phi$ and $\psi$ with the powers in (\ref{value of A}) and (\ref{ value of B}) applies with the parameters in  (\ref{ value of gamma-delta}) and proves that $S_{bc}$ is bounded from $L^{p}_{\alpha}$ to $L^{q}_{\beta}$ with $1<p\leq q<\infty$ when the inequalities in (iii) hold.
\end{proof}

\begin{proof}[Proof of sufficiency for Theorem \ref{Theorem-Boundedness of T,S 1.2}]\label{Sufficiency Proofs 1.2}
First, let $1=p=q$ and $f\in L_{\alpha}^{1}$. Writing the $L_{\beta}^{1}$ norm of $S_{bc}f$ explicitly and applying Fubini theorem, we get that
\begin{align*}
\|S_{bc}f\|_{L_{\beta}^{1}}&\lesssim \int_{\mathbb{B}} \int_{\mathbb{B}} |R_{c}(x,y)||f(x)|(1-|x|^2)^{b}d\nu(x)(1-|y|^2)^{\beta}d\nu(y)\\
&\lesssim\int_{\mathbb{B}}|f(x)|(1-|x|^2)^{b}\int_{\mathbb{B}} \frac{(1-|y|^2)^{\beta}}{[x,y]^{n+c}}d\nu(y)d\nu(x)\\
&=V_{\alpha}\int_{\mathbb{B}}|f(x)|(1-|x|^2)^{b-\alpha}\int_{\mathbb{B}} \frac{(1-|x|^2)^{\beta}}{[x,y]^{n+c}}d\nu(y)d\nu_{\alpha}(x).
\end{align*}
Let $J(x)$ be that part of the integrand of the outer integral multiplying $|f(x)|$. We will show that $J$ is bounded on $\mathbb{B}$ by using Lemma \ref{Integral-[x,y]} since $\beta>-1$ as required.

Consider $(n+c)-n-\beta=c-\beta$. Firstly, if $c-\beta<0$, then the integral in $J(x)$ is bounded and $J(x)$ is also bounded since $b-\alpha\geq 0$ by the first inequality of (iii). Next, if $c-\beta=0$, then the integral in $J(x)$ is $1+\log (1-|x|^{2})^{-1}$. But this time $b>\alpha$  since the two inequalities are the same in (iii) and $b=\alpha$  can not hold as stated in Theorem \ref{the strict inequality condition}. Then $J(x)$ is bounded by (\ref{obvious-inequalities}). Lastly, if $c-\beta>0$, then $J(x)\sim (1-|x|^2)^{b-\alpha-c+\beta}$. But by the second inequality in (iii) we have  $b-\alpha-c+\beta\geq 0$ and thus $J(x)$ is bounded once again. Therefore $\|S_{bc}f\|_{L_{\beta}^{1}}\lesssim \|f\|_{L_{\alpha}^{1}}$ and $S_{bc}$ is bounded from $L^{1}_{\alpha}$ to $L^{1}_{\beta}$.

Next, let $1=p<q<\infty$ and $f\in b_{\alpha}^{1}$. Writing the $L_{\beta}^{q}$ norm of $S_{bc}f$ explicitly and using Lemma \ref{Minkowski int-inequality} with the measures $\nu_{\alpha}$ and $\nu_{\beta}$, we obtain
\begin{align*}
\|Sf\|_{L_{\beta}^{q}}&=\left(\int_{\mathbb{B}} \left|V_{\alpha}\int_{\mathbb{B}} R_{c}(x,y)f(x)(1-|x|^2)^{b-\alpha}d\nu_{\alpha}(x)\right|^{q}d\nu_{\beta}(y)\right)^{1/q}\\
&\lesssim \int_{\mathbb{B}} \left(\int_{\mathbb{B}} |R_{c}(x,y)|^{q}|f(x)|^{q}(1-|x|^2)^{(b-\alpha)q}d\nu_{\beta}(y)\right)^{1/q}d\nu_{\alpha}(x)\\
&\lesssim\int_{\mathbb{B}}|f(x)|(1-|x|^2)^{b-\alpha} \left(\int_{\mathbb{B}} \frac{(1-|y|^2)^{\beta}}{[x,y]^{(n+c)q}}d\nu(y)\right)^{1/q}d\nu_{\alpha}(x).
\end{align*}
Let $J(x)$ be that part of the integrand of the outer integral multiplying $|f(x)|$ for $x\in \mathbb{B}$. We will show that $J$ is bounded on $\mathbb{B}$ by using Lemma \ref{Integral-[x,y]} since $\beta>-1$ as required.

Now we consider $\rho=(n+c)q-n-\beta$. Firstly, if $\rho<0$, then the integral in $J(x)$ is bounded and $J(x)$ is also bounded since $b-\alpha\geq 0$ by the first inequality of (iii). Next, if $\rho=0$, then the integral in $J(x)$ is $1+\log (1-|x|^{2})^{-1}$. Then since the two inequalities are the same in (iii), we have $b>\alpha$ again by Theorem \ref{the strict inequality condition}. Therefore $J(x)$ is bounded by (\ref{obvious-inequalities}). Lastly, if $\rho>0$, then $J(x)\sim (1-|x|^2)^{b-\alpha-\rho/q}$. But by the second inequality in (iii), we have  $b-\alpha-\rho/q=b-\alpha-(n+c)+(n+\beta)/q\geq 0$ and thus $J(x)$ is bounded once again. Hence $\|Sf\|_{L_{\beta}^{q}}\lesssim \|f\|_{L_{\alpha}^{1}}$ and $S_{bc}$ is bounded from $L^{1}_{\alpha}$ to $L^{q}_{\beta}$ with  $1=p<q<\infty$.
\end{proof}

\begin{proof}[Proof of sufficiency for Theorem \ref{Theorem-Boundedness of T,S 2}]\label{Sufficiency Proofs 2}
First, let $1<q<p<\infty$. The proof of this case starts out as in the proof of sufficiency for Theorem \ref{Theorem-Boundedness of T,S 1.1}.
Now we employ the Schur test in Theorem \ref{Schur test 2} but with the same test data as sufficiency proof in the case $1<p<q<\infty$. So we have the $\mu=\nu_{\alpha}, \upsilon=\nu_{\beta}$ and the kernel $K(x,y)=\frac{(1-|x|^2)^{b-\alpha}}{[x,y]^{n+c}}$ which together give us $V_{\alpha}G=S_{bc}$. Thus we need to find  two strictly positive functions $\phi(x)=(1-|x|^2)^{A}$ and $\psi(y)=(1-|y|^2)^{B}$ on $\mathbb{B}$ with $A,B\in\mathbb{R}$ to be determined. Two of the three inequalities that need to be satisfied for the Schur test are
\begin{align*}
&\int_{\mathbb{B}} \frac{(1-|x|^2)^{b-\alpha}}{[x,y]^{n+c}}(1-|x|^2)^{Ap'}(1-|x|^2)^{\alpha}d\nu(x)\lesssim (1-|y|^2)^{Bq'},\\
&\int_{\mathbb{B}} \frac{(1-|x|^2)^{b-\alpha}}{[x,y]^{n+c}}(1-|y|^2)^{Bq}(1-|y|^2)^{\beta}d\nu(y)\lesssim (1-|x|^2)^{Ap}.
\end{align*}
One way to satisfy them is by matching the growth rates of their two sides, that is, the powers of the $1-|\cdot|^{2}$. This is possible if $A,B<0$ and
\begin{equation}\label{possible from Lemma Integral-[x,y] 2}
\begin{split}
&-Bq'=c-(b+Ap'),\\
&-Ap=c-(Bq+\beta)-(b-\alpha),
\end{split}
\end{equation}
by Lemma \ref{Integral-[x,y]}. But we must also make sure that the conditions of Lemma \ref{Integral-[x,y]} for this to happen are met, that is,
\begin{equation}\label{conditions of Lemma Integral-[x,y] 2}
\begin{split}
b+Ap'&>-1, \quad Bq+\beta>-1,\\
c-(b+Ap')&>0, \quad c-(Bq+\beta)>0.
\end{split}
\end{equation}
Substituting for $p',q'$ in terms of $p,q$, we can write (\ref{possible from Lemma Integral-[x,y] 2}) as a system of two linear equations in the two unknowns $A,B$  as
\begin{equation}\label{linear equations system for A,B}
\begin{split}
p(q-1)A-q(p-1)B&=(c-b)(p-1)(q-1),\\
-pA+qB&=c-b+\alpha-\beta.
\end{split}
\end{equation}
this system has the following unique solution
\begin{equation}\label{solution-linear equations system for A,B}
\begin{split}
A&=\frac{(p-1)(q(c-b)+\alpha-\beta)}{p(q-p)},\\
B&=\frac{(q-1)(p(c-b)+\alpha-\beta)}{q(q-p)}
\end{split}
\end{equation}
for $A,B$. The second inequality in (iii) can be written in the form
\begin{equation}\label{ value of c}
c=b+\frac{1+\beta}{q}-\frac{1+\alpha}{p}-\varepsilon
\end{equation}
with $\varepsilon>0$. So by Corollary \ref{Corollary-Lemma-Kernel-comparasion}, it suffices to prove that $S_{bc}$ is bounded when (\ref{ value of c}) holds for small enough $\varepsilon>0$. Substituting this value of $c$ into (\ref{solution-linear equations system for A,B}), the solution
\begin{equation}\label{new form solution-les for A,B}
\begin{split}
A&=\frac{(p-1)}{p}\left(-\frac{1+\alpha}{p}+\frac{\varepsilon q}{p-q}\right),\\
B&=\frac{(q-1)}{q}\left(-\frac{1+\beta}{q}+\frac{\varepsilon p}{p-q}\right),
\end{split}
\end{equation}
Now, It remains to show that this solution satisfies all the necessary conditions for sufficiently small $\varepsilon>0$. Recall that $\beta>-1$.
First, by the  inequality $\alpha+1<p(b+1)$,
\begin{equation*}
c=b+\frac{1+\beta}{q}-\frac{1+\alpha}{p}-\varepsilon>\frac{1+\beta}{q}-1-\varepsilon>-(1+\varepsilon)>-n
\end{equation*}
provided $\varepsilon<n-1$. Next we need to check that the inequalities in (\ref{conditions of Lemma Integral-[x,y] 2}). By (\ref{new form solution-les for A,B}) and  the inequality $\alpha+1<p(b+1)$,
\begin{equation}\label{first condition of Lemma Integral-[x,y] }
b+Ap'=b-\frac{1+\alpha}{p}+\frac{\varepsilon q}{p-q}>-1+\frac{\varepsilon q}{p-q}>-1.
\end{equation}
By (\ref{new form solution-les for A,B}) again,
\begin{equation}\label{second condition of Lemma Integral-[x,y] }
Bq+\beta=(q-1)\left(-\frac{1+\beta}{q}+\frac{\varepsilon p}{p-q}\right)+\beta=-1+\frac{1+\beta}{q}+\frac{\varepsilon p(q-1)}{p-q}>-1.
\end{equation}
By (\ref{ value of c}) and (\ref{first condition of Lemma Integral-[x,y] }),
\begin{equation}\label{third condition of Lemma Integral-[x,y] }
c-(b+Ap')=-\varepsilon+\frac{1+\beta}{q}-\frac{\varepsilon q}{p-q}=\frac{1+\beta}{q}-\frac{\varepsilon p}{p-q}>0
\end{equation}
provided $\varepsilon<(\frac{1}{q}-\frac{1}{p})(1+\beta)$. Lastly, by (\ref{ value of c}), (\ref{second condition of Lemma Integral-[x,y] }), and  the inequality $\alpha+1<p(b+1)$,
\begin{equation}\label{fourth condition of Lemma Integral-[x,y] }
\begin{split}
c-(Bq+\beta)&=b+\frac{1+\beta}{q}-\frac{1+\alpha}{p}-\varepsilon+1-\frac{1+\beta}{q}-\frac{\varepsilon p(q-1)}{p-q}\\
&=b+1-\frac{1+\alpha}{p}-\frac{\varepsilon q(p-1)}{p-q}>0
\end{split}
\end{equation}
provided $\varepsilon<\frac{p}{p-1}(\frac{1}{q}-\frac{1}{p})(b+1-\frac{1+\alpha}{p})$. Finally, we verify the third condition of Theorem \ref{Schur test 2}, that is the finiteness of the double integral
\begin{equation*}
\int_{\mathbb{B}} \int_{\mathbb{B}} \frac{(1-|x|^2)^{(b-\alpha)}}{[x,y]^{(n+c)}}(1-|x|^2)^{Ap'}(1-|y|^2)^{Bq} d\nu_{\alpha}(x)d\nu_{\beta}(y).
\end{equation*}
We call it $\mathcal{I}$. We first estimate the integral with respect to $d\nu(x)$ by Lemma \ref{Integral-[x,y]} and obtain
\begin{equation*}
\mathcal{I}\sim \int_{\mathbb{B}} (1-|y|^2)^{Bq+\beta-c+b+Ap'}d\nu(y)
\end{equation*}
by (\ref{third condition of Lemma Integral-[x,y] }). Moreover, by (\ref{second condition of Lemma Integral-[x,y] }) and (\ref{third condition of Lemma Integral-[x,y] }), the power of the $(1-|y|^2)$ is
\begin{align*}
Bq+\beta-(c-(b+Ap'))&=-1+\frac{1+\beta}{q}+\frac{\varepsilon p(q-1)}{p-q}-\frac{1+\beta}{q}+\frac{\varepsilon p}{p-q}\\
&=-1+\frac{\varepsilon pq}{p-q}>-1,
\end{align*}
and this makes $\mathcal{I}$ finite. Hence for
\begin{equation*}
0<\varepsilon< \min \left\{n-1,\left(\frac{1}{q}-\frac{1}{p}\right)(1+\beta),\frac{p}{p-1}\left(\frac{1}{q}-\frac{1}{p}\right)\left(b+1-\frac{1+\alpha}{p}\right)\right\},
\end{equation*}
Theorem \ref{Schur test 2} using the selected functions $\phi$ and $\psi$ with the powers in  (\ref{new form solution-les for A,B}) applies  and proves that $S_{bc}$ is bounded from $L^{p}_{\alpha}$ to $L^{q}_{\beta}$ with $1<q< p<\infty$ when the inequalities in (iii) hold.

Next, let $1=q<p<\infty$. Assume that $f\in L_{\alpha}^{p}$. Writing the $L_{\beta}^{1}$ norm of $S_{bc}f$ explicitly and  applying  Fubini's theorem, then applying the H\"{o}lder inequality, we obtain
\begin{align*}
\|S_{bc}f\|_{L_{\beta}^{1}}&\lesssim \int_{\mathbb{B}} \int_{\mathbb{B}} |R_{c}(x,y)||f(x)|(1-|x|^2)^{b}d\nu(x)(1-|y|^2)^{\beta}d\nu(y)\\
&\lesssim\int_{\mathbb{B}}|f(x)|(1-|x|^2)^{b}\int_{\mathbb{B}} \frac{(1-|y|^2)^{\beta}}{[x,y]^{n+c}}d\nu(y)d\nu(x)\\
&= \int_{\mathbb{B}}|f(x)|(1-|x|^2)^{\alpha/p}\int_{\mathbb{B}} \frac{(1-|y|^2)^{\beta}d\nu(y)}{[x,y]^{n+c}}(1-|x|^2)^{b-\alpha/p}d\nu(x)\\
&\lesssim \|f\|_{L_{\alpha}^{p}}\left(\int_{\mathbb{B}}\left(\int_{\mathbb{B}} \frac{(1-|y|^2)^{\beta}d\nu(y)}{[x,y]^{n+c}}\right)^{p'}(1-|x|^2)^{(b-\alpha/p)p'}d\nu(x)\right)^{1/p'}\\
&=:J^{1/p'}\|f\|_{L_{\alpha}^{p}}.
\end{align*}
We will show that $J$ is finite using Lemma \ref{Integral-[x,y]} since $\beta>-1$ as required.

 Firstly, if $c-\beta<0$, $J\sim \int_{\mathbb{B}}(1-|x|^2)^{(b-\alpha/p)p'}d\nu(x)$. By the first inequality in (iii), we have
 \begin{equation*}
(b-\frac{\alpha}{p})p'=p'\big(b+1-\frac{1+\alpha}{p}-\frac{1}{p'}\big)>p'\big(-\frac{1}{p'}\big)=-1.
\end{equation*}
 Thus  $J$ is finite by Lemma \ref{an estimate from calculus}. Next, if $c-\beta=0$, then
 \begin{equation*}
J\sim \int_{\mathbb{B}}\left(1+\log \dfrac{1}{(1-|x|^{2})}\right)^{p'}(1-|x|^2)^{(b-\alpha/p)p'}d\nu(x)<\infty
\end{equation*}
also by Lemma \ref{an estimate from calculus} . Lastly, if $c-\beta>0$, then $J\sim \int_{\mathbb{B}}(1-|x|^2)^{(b-\alpha/p)p'-(c-\beta)p'}d\nu(x)$. But by the second inequality in (iii), we have
 \begin{equation*}
(b-\frac{\alpha}{p})p'-(c-\beta)p'=p'\big(b-c+1+\beta-\frac{1+\alpha}{p}-\frac{1}{p'}\big)>p'\big(-\frac{1}{p'}\big)=-1
\end{equation*}
 and thus $J$ is finite once again. Therefore $\|S_{bc}f\|_{L_{\beta}^{1}}\lesssim \|f\|_{\mathcal{L}_{\alpha}^{p}} $ and $S_{bc}$ is bounded from $L^{p}_{\alpha}$ to $L^{1}_{\beta}$ when $1=q<p<\infty$.
\end{proof}

\begin{proof}[Proof of sufficiency for Theorem \ref{Theorem-Boundedness of T,S 3.1}]\label{Sufficiency Proofs 3.1}
Let $f\in L_{\alpha}^{p}$. Writing  $S_{bc}f(y)$ explicitly and applying H\"{o}lder inequality with the measure $\nu_{\alpha}$ yields
\begin{align*}
(1-|y|^2)^{\beta}|Sf(y)|&\lesssim (1-|y|^2)^{\beta}\int_{\mathbb{B}} |R_{c}(x,y)||f(x)|(1-|x|^2)^{b}d\nu(x)\\
&\lesssim(1-|y|^2)^{\beta}\int_{\mathbb{B}}|f(x)|\frac{(1-|x|^2)^{b-\alpha}}{[x,y]^{n+c}}d\nu_{\alpha}(x)\\
&\lesssim\|f\|_{L_{\alpha}^{p}}(1-|y|^2)^{\beta}\left(\int_{\mathbb{B}}\frac{(1-|x|^2)^{(b-\alpha)p'+\alpha}}{[x,y]^{(n+c)p'}}d\nu(x)\right)^{1/p'}\\
&=:J(y)\|f\|_{L_{\alpha}^{p}}.
\end{align*}
 We will show that $J$ is bounded on $\mathbb{B}$ by using Lemma \ref{Integral-[x,y]}. First, notice that
\begin{equation*}
(b-\alpha)p'+\alpha+1=p'\left(b-\alpha+\frac{(\alpha+1)(p-1)}{p}\right)=p'\left(1+b-\frac{\alpha+1}{p}\right)>0
\end{equation*}
by the the first inequality of (iii) as required.
Consider that
\begin{align*}
\rho&=(n+c)p'-n-(b-\alpha)p'-\alpha=p'\left(n+c-b+\alpha-\frac{n+\alpha}{p'}\right)\\
&=p'\left(c-b+\frac{n+\alpha}{p}\right).
\end{align*}
If $\rho<0$, then the integral in $J(y)$ is bounded and $J(y)$ is also bounded for all $y\in \mathbb{B}$ since $\beta\geq 0$. Note that this is obvious from the second inequality in (iii) when $\beta=0$. Next, if $\rho=0$, then the integral in $J(y)$ is $(1+\log (1-|y|^{2})^{-1})^{1/p'}$ and since (iii) reads $\beta>0$, and therefore $J(y)$ is bounded for all $y\in \mathbb{B}$ by (\ref{obvious-inequalities}). Lastly, if $\rho>0$, then $J(y)\sim (1-|y|^2)^{\beta-\rho/p'}$. But by the second inequality in (iii), we have  $\beta-\rho/p'=\beta-c+b-\frac{n+\alpha}{p}\geq 0$ and thus $J(y)$ is  bounded for all $y\in \mathbb{B}$ once again. Then  $(1-|y|^2)^{\beta} |S_{bc}f(y)|\lesssim \|f\|_{L_{\alpha}^{p}}$ for all $y\in \mathbb{B}$  and $\|S_{bc}f\|_{\mathcal{L}_{\beta}^{\infty}}\lesssim \|f\|_{L_{\alpha}^{p}}$.
 Thus $S_{bc}$ is bounded from $L^{p}_{\alpha}$ to $\mathcal{L}^{\infty}_{\beta}$ with  $1
 <p<q=\infty$.
\end{proof}

\begin{proof}[Proof of sufficiency for Theorem \ref{Theorem-Boundedness of T,S 3.2}]\label{Sufficiency Proofs 3.2}
Let $f\in L_{\alpha}^{1}$. If $\alpha=b$ and $\beta=0$, then $c<-n$  by the second inequality in (iii) and $|R_{c}(x,y)|$ is bounded by Lemma \ref{Lemma-Kernel-Estimate}. So we have
\begin{equation*}
|S_{bc}f(y)|\lesssim \int_{\mathbb{B}}|R_{c}(x,y)||f(x)|(1-|x|^2)^{b}d\nu(x)\lesssim \|f\|_{L_{\alpha}^{1}} \quad (y\in \mathbb{B}).
\end{equation*}
Then $\|S_{bc}f\|_{L^{\infty}}\lesssim \|f\|_{L_{\alpha}^{1}}$.

Otherwise $\alpha\leq b$ and $\beta>0$, and there are values of $c>-n$ satisfying the inequalities in (iii). So in the rest of proof we can assume $c>-n$ by Corollary \ref{Corollary-Lemma-Kernel-comparasion}. Then we write $S_{bc}f(y)$ explicitly, and obtain
\begin{align*}
(1-|y|^2)^{\beta}|S_{bc}f(y)|&\leq (1-|y|^2)^{\beta}\int_{\mathbb{B}} |R_{c}(x,y)||f(x)|(1-|x|^2)^{b}d\nu(x)\\
&\lesssim (1-|y|^2)^{\beta}\int_{\mathbb{B}}|f(x)|\frac{(1-|x|^2)^{b-\alpha}}{[x,y]^{n+c}}d\nu_{\alpha}(x)\\
&=\int_{\mathbb{B}}|f(x)|(1-|y|^2)^{\beta}\frac{(1-|x|^2)^{b-\alpha}}{[x,y]^{n+c}}d\nu_{\alpha}(x)\\
&=:\int_{\mathbb{B}}|f(x)|J(x,y)d\nu_{\alpha}(x).
\end{align*}
Since $[x,y]\gtrsim(1-|x|^2) $ and $[x,y]\gtrsim (1-|y|^2) $ for $x,y \in \mathbb{B}$, we have $J(x,y)\lesssim (1-|x|^2)^{b-\alpha+\beta-(n+c)}$ for all such $y$. Note that the power here is nonnegative by the second inequality in (iii) yielding that $J(x,y)$ is bounded for all $x,y \in \mathbb{B}$. So we get that
\begin{equation*}
(1-|x|^2)^{\beta}|S_{bc}f(y)|\lesssim \int_{\mathbb{B}}|f(x)|d\nu_{\alpha}(x)=\|f\|_{L_{\alpha}^{1}} \quad (y\in \mathbb{B})
\end{equation*}
 and $\|S_{bc}f\|_{\mathcal{L}_{\beta}^{\infty}}\lesssim \|f\|_{L_{\alpha}^{1}}$.
 Thus $S_{bc}$ is bounded from $L^{1}_{\alpha}$ to $\mathcal{L}^{\infty}_{\beta}$.
\end{proof}

\begin{proof}[Proof of sufficiency for Theorem \ref{Theorem-Boundedness of T,S 3.3}]\label{Sufficiency Proofs 3.3}
First, let $q=1$. Assume that $f\in \mathcal{L}_{\alpha}^{\infty}$. Writing the $L_{\beta}^{1}$ norm of $S_{bc}f$ explicitly and  applying  Fubini's theorem, taking the $\mathcal{L}_{\alpha}^{\infty}$ norm of $f$ out of integral, we obtain
\begin{align*}
\|S_{bc}f\|_{L_{\beta}^{1}}&\lesssim \int_{\mathbb{B}} \int_{\mathbb{B}} |R_{c}(x,y)||f(x)|(1-|x|^2)^{b}d\nu(x)(1-|y|^2)^{\beta}d\nu(y)\\
&\lesssim \int_{\mathbb{B}}|f(x)|(1-|x|^2)^{b}\int_{\mathbb{B}} \frac{(1-|y|^2)^{\beta}}{[x,y]^{n+c}}d\nu(y)d\nu(x)\\
&\lesssim \|f\|_{L_{\alpha}^{\infty}}\int_{\mathbb{B}}(1-|x|^2)^{b-\alpha}\int_{\mathbb{B}} \frac{(1-|y|^2)^{\beta}}{[x,y]^{n+c}}d\nu(y)d\nu(x)
=:J\|f\|_{L_{\alpha}^{\infty}}.
\end{align*}
We will show that $J$ is finite using Lemma \ref{Integral-[x,y]} since $\beta>-1$ as required.

Firstly, if $c-\beta<0$, then the inner integral in $J$ is bounded and $J$ is finite since $b-\alpha>-1$ by the first inequality of (iii). Next, if $c-\beta=0$, then the inner integral is $1+\log (1/(1-|x|^{2}))^{-1}$. Then $J(x)$ is finite by $b-\alpha>-1$   and Lemma \ref{an estimate from calculus}. Lastly, if $c-\beta>0$, then $J\sim \int_{\mathbb{B}}(1-|x|^2)^{b-\alpha-c+\beta}d\nu(x)$. But by the second inequality in (iii) we have  $b-\alpha-c+\beta>-1$ and thus $J$ is finite once again. Therefore $\|S_{bc}f\|_{L_{\beta}^{1}}\lesssim \|f\|_{\mathcal{L}_{\alpha}^{\infty}}$ and $S_{bc}$ is bounded from $\mathcal{L}^{\infty}_{\alpha}$ to $L^{1}_{\beta}$.

Next, let $1<q<p=\infty$. Assume that $f\in \mathcal{L}_{\alpha}^{\infty}$. Writing $L_{\beta}^{q}$ norm of $S_{bc}f$ explicitly and taking the $\mathcal{L}_{\alpha}^{\infty}$ norm of $f$ out of integral, we obtain
\begin{align*}
\|Sf\|^{q}_{L_{\beta}^{q}}&= \int_{\mathbb{B}} \left|\int_{\mathbb{B}} |R_{c}(x,y)||f(x)|(1-|x|^2)^{b}d\nu(x)\right|^{q}d\nu_{\beta}(y)\\
&\lesssim \|f\|^{q}_{\mathcal{L}_{\alpha}^{\infty}}\int_{\mathbb{B}}(1-|y|^2)^{\beta}\left(\int_{\mathbb{B}} \frac{(1-|x|^2)^{b-\alpha}}{[x,y]^{n+c}}d\nu(x)\right)^{q}d\nu(y)
=:J\|f\|^{q}_{\mathcal{L}_{\alpha}^{\infty}}.
\end{align*}
We will show that $J$ is finite using Lemma \ref{Integral-[x,y]} since $\beta-\alpha>-1$ by the first inequality in (iii)  as required.

 Firstly, if $c-b+\alpha<0$, then the inner integral in $J$ is bounded and $J$ is finite since $\beta>-1$ . Next, if $c-b+\alpha=0$, then the inner integral is $1+\log (1/(1-|x|^{2}))^{-1}$. Then $J$ is finite by  Lemma \ref{an estimate from calculus}. Lastly, if $c-b+\alpha>0$, then $J\sim \int_{\mathbb{B}}(1-|y|^2)^{\beta-(c-b+\alpha)q}d\nu(y)$. But  we have
\begin{equation*}
\beta-(c-b+\alpha)q=\beta-(c-b+\alpha)q+1-1=q(\frac{\beta+1}{q}-c+b-\alpha)-1>-1
\end{equation*}
  by the second inequality in (iii)  and thus $J$ is finite once again. Therefore $\|S_{bc}f\|^{q}_{L_{\beta}^{q}}\- \lesssim \|f\|^{q}_{\mathcal{L}_{\alpha}^{\infty}}$ and $S_{bc}$ is bounded from $\mathcal{L}^{\infty}_{\alpha}$ to $L^{q}_{\beta}$.
\end{proof}

\begin{proof}[Proof of sufficiency for Theorem \ref{Theorem-Boundedness of T,S 3.4}]\label{Sufficiency Proofs 3.4}
Let $f\in \mathcal{L}_{\alpha}^{\infty}$. Writing  $S_{bc}f(y)$ explicitly and taking the $\mathcal{L}_{\alpha}^{\infty}$ norm of $f$ out of integral, we obtain
\begin{align*}
(1-|y|^2)^{\beta}|Sf(y)|&\lesssim (1-|y|^2)^{\beta} \int_{\mathbb{B}} |R_{c}(x,y)||f(x)|(1-|x|^2)^{b}d\nu(x)\\
&\lesssim\|f\|_{L_{\alpha}^{\infty}} (1-|y|^2)^{\beta}\int_{\mathbb{B}} \frac{(1-|x|^2)^{b-\alpha}}{[x,y]^{n+c}}d\nu(x)\\
&=:J(y)\|f\|_{L_{\alpha}^{\infty}}.
\end{align*}
We will show that $J$ is bounded on $\mathbb{B}$ by using Lemma \ref{Integral-[x,y]} since $b-\alpha>-1$ by the first inequality in (iii)  as required.

Firstly, if $c-b+\alpha<0$, then the  integral in $J$ is bounded and $J$ is bounded since $\beta>-1$ . Next, if $c-b+\alpha=0$, then the inner integral is $1+\log (1/(1-|x|^{2}))^{-1}$. Then $J$ is finite by  (\ref{obvious-inequalities}). Lastly, if $c-b+\alpha>0$, then $J(y)\sim (1-|y|^2)^{\beta-c+b-\alpha}$. But  we have $\beta-c+b-\alpha\geq 0$
  by the second inequality in (iii) and thus $J$ is bounded once again. Therefore $\|S_{bc}f\|_{\mathcal{L}_{\beta}^{\infty}}\lesssim \|f\|_{\mathcal{L}_{\alpha}^{\infty}}$ and $S_{bc}$ is bounded from $\mathcal{L}^{\infty}_{\alpha}$ to $\mathcal{L}^{\infty}_{\beta}$. This completes the proof.
\end{proof}

\bibliographystyle{amsalpha}

\begin{thebibliography}{10}
%
\bibitem{AMSZ}
 N. Aronszajn,  F. Mulla,  P. Szeptycki,  \textit{On spaces of potentials connected with $L^{p}$-spaces},
Ann. Inst. Fourier (Grenoble), \textbf{12} (1963), 211--306.
%
\bibitem{ABR}
S. Axler, P. Bourdon,  W. Ramey,
\textit{Harmonic function theory},
2\textit{nd} ed., Grad. Texts in Math., vol. 137, Springer, New York, 2001.
%
\bibitem{CKY}
B. R. Choe, H. Koo, H. Yi,
\textit{Derivatives of harmonic Bergman and Bloch functions on the ball},
J. Math. Anal. Appl. \textbf{260} (2001), 100--123.
%
\bibitem{CR}
R. R. Coifman, R. Rochberg, \textit{Representation theorems for holomorphic and harmonic
functions in $L^{p}$}, Ast\'{e}risque \textbf{77} (1980), 12--66.
%
\bibitem{DS}
 A.E. Djrbashian, F.A. Shamoian,
 \textit{Topics in the Theory of $A^{p}_{\alpha}$ Spaces},
Teubner Texts in Mathematics, \textbf{105}, B. G. Teubner, Leipzig 1988.
%
\bibitem{DOG}
\"{O}. F. Do\u{g}an,
\textit{Harmonic Besov spaces with small exponents},
Complex Variables and Elliptic Equations, DOI: 10.1080/17476933.2019.1652277.
%
\bibitem{DU1}
\"{O}F, Do\u{g}an, AE, \"Ureyen, \textit{Weighted harmonic Bloch spaces on the ball},
Complex Anal. Oper. Theory  \textbf{ 12(5)} (2018), 1143--1177.
%
\bibitem{DU2}
\"{O}F, Do\u{g}an, AE, \"Ureyen,\textit{ Inclusion relations between harmonic Bergman-Besov and weighted Bloch spaces on the
unit ball}, Czech. Math. J. \textbf{69} (2019), no.2, 503--523.
%
\bibitem{G}
E. Gagliardo,  \textit{On Integral transformations with Positive Kernel}, Proc. Amer. Math. Soc.,
\textbf{16} (1965), 429--434.
%
\bibitem{GKU1}
S. Gerg\"un, H. T. Kaptano\u glu,  A. E. \"Ureyen,
\textit{Reproducing kernels for harmonic Besov spaces on the ball},
C. R. Math. Acad. Sci. Paris \textbf{347} (2009), 735--738.
%
\bibitem{GKU2}
S. Gerg\"{u}n, H. T. Kaptano\u glu,  A. E. \"Ureyen,
\textit{Harmonic Besov spaces on the ball},
Int. J. Math. \textbf{27} (2016), no.9, 1650070, 59 pp.
%
\bibitem{JP}
M. Jevti\'c, M. Pavlovi\'c,
\textit{Harmonic Bergman functions on the unit ball in $\mathbb R^n$},
Acta Math. Hungar. \textbf{85} (1999), 81--96.
%
\bibitem{JP2}
M. Jevti\'c, M. Pavlovi\'c,
\textit{Harmonic Besov spaces on the unit ball in $\mathbb R^n$},
Rocky Mountain J. Math. \textbf{31} (2001), 1305--1316.
%
\bibitem{KT}
H. T. Kaptano\u glu, S. T\"ul\"u,
\textit{Weighted Bloch, Lipschitz, Zygmund, Bers, and growth spaces of the ball: Bergman projections and characterizations},
Taiwanese J. Math. \textbf{15} (2011), no.1, 101--127.
%
\bibitem{KU1}
H. T. Kaptano\u glu,  A. E. \"Ureyen,
\textit{Singular integral operators with Bergman-Besov kernels on the ball},
Integr. Equ. Oper. Theory  \textbf{91}(2019), 30 pp.
%
\bibitem{LS}
C. W. Liu, J. H. Shi,
\textit{Invariant mean-value property and $\mathcal M$-harmonicity in the
unit ball of $\mathbb R^n$}, Acta Math. Sin. \textbf{19} (2003), 187--200.
%
\bibitem{LSR}
C. Liu, J. Shi, G. Ren, \textit{Duality for harmonic mixed-norm spaces in the unit ball
of Rn}, Ann. Sci. Math. Q\'{u}ebec \textbf{25} (2001), 179--197.
%
\bibitem{M}
J. Miao,
\textit{Reproducing kernels for harmonic Bergman spaces of the unit ball},
Monatsh. Math. \textbf{125} (1998), 25--35.

%
\bibitem{O}
G.O. Okikiolu, \textit{On inequalities for integral operators}, Glasgow Math. J., \textbf{11} (1970), 126--
133.
%
\bibitem{O1}
G.O. Okikiolu, \textit{Aspects of the theory of bounded  integral operators in $L^{p}$-spaces}, Academic, London, 1971.
%
\bibitem{PE}
S. P\'{e}rez-Esteva, \textit{Duality on vector-valued weighted harmonic Bergman spaces}, Studia
Math. \textbf{118} (1996), 37--47.
%
\bibitem{R}
G. Ren,
\textit{Harmonic Bergman spaces with small exponents in the unit ball},
Collect. Math. \textbf{53} (2003), 83--98.
%
\bibitem{Z}
R. Zhao,  \textit{Generalization of Schur's test and its application to a class of integral operators
on the unit ball of $\mathcal{C}^{n}$}, Integr. Equ. Oper. Theory, \textbf{82} (2015), 519--532.
%
\bibitem{LZ}
L. Zhou,
\textit{On the boundedness and the norm of a class of integral operators},
Acta Mathematica Scientia \textbf{35(6)} (2015), 1475--1482.
\end{thebibliography}

\end{document}